\documentclass[10pt]{amsart}
\usepackage{amssymb}
\usepackage{tikz-cd}

\usetikzlibrary{decorations.pathmorphing}

\title{Finite conjugacy classes and split exact cochain complexes}

\author {Christian Rosendal}
\address{Department of Mathematics\\University of Maryland\\4176 Campus Drive - William E. Kirwan Hall\\College Park, MD 20742-4015\\USA}
\email{rosendal@umd.edu}
\urladdr{https://sites.google.com/view/christian-rosendal/}

\newcommand{\norm}[1]{\lVert#1\rVert}
\newcommand{\Norm}[1]{\big\lVert#1\big\rVert}
\newcommand{\NORM}[1]{\Big\lVert#1\Big\rVert}
\newcommand{\triple}[1]{|\!|\!|#1|\!|\!|}

\newcommand{\forkindep}[1][]{\mathop{\mathop{\vcenter{\hbox{\oalign{\noalign{\kern-.3ex}
\hfil$\vert$\hfil\cr\noalign{\kern-.7ex}$\smile$\cr\noalign{\kern-.3ex}}}}}\displaylimits_{#1}}}
%Write  $A\forkindep[C]B$  

\newcommand {\Z}{\mathbb Z}

\newcommand {\R}{\mathbb R}

\newcommand {\A}{\mathbb A}
\newcommand {\F}{\mathbb F}
\newcommand {\M}{\mathbb M}

\newcommand{\om}{\omega}
\newcommand{\eps}{\epsilon}

\newcommand{\iso}{\cong}

\newcommand{\tom} {\emptyset}

\newcommand{\saa}{\Rightarrow}

\newcommand{\Lim}[1]{\mathop{\longrightarrow}\limits_{#1}}

\newcommand {\Del}{ \; \Big| \;}
\newcommand {\del}{ \; \big| \;}

\newcommand {\go} {\mathfrak}
\newcommand {\ku} {\mathcal}

\newcommand{\ov}{\overline}
\newcommand{\inv}{^{-1}}

\renewcommand {\a} {\forall}

\newcommand{\maths}[1]{\[\begin{split}{#1}\end{split}\]}
%Provides a displayed set of equations%

\theoremstyle{plain}
\newtheorem{thm}{Theorem}[section]
\newtheorem{cor}[thm]{Corollary}
\newtheorem{lemme}[thm]{Lemma}
\newtheorem{prop} [thm] {Proposition}
\newtheorem{defi} [thm] {Definition}

\theoremstyle{definition}

\newtheorem{rem}[thm]{Remark}
\newtheorem{exa}[thm]{Example}

\definecolor{groen}{rgb}{0,0.5,.7}
\definecolor{gul}{rgb}{0.94,0.8,0}
\definecolor{blaa}{rgb}{0.16,0,0.6}
\definecolor{roed}{rgb}{1,0,0}

\makeindex

\begin{document}
%\subjclass[2010]{Primary: 20E08, Secondary: 03E15}

\keywords{Banach modules, cohomology, affine isometric actions}
\thanks{The author was partially supported by the NSF through the award DMS 2204849 and is grateful for a number of helpful comments by N. Monod and R. Sauer.}

\maketitle

\begin{abstract}
We study the cohomology of isometric group actions on (super) reflexive Banach spaces with a focus on the relation between finite conjugacy classes and split exactness of cochain complexes. In particular, we show that, if a uniformly convex Banach module has no almost invariant vectors under the FC-centre of the acting group, then the associated cochain complex is split exact. Other similar rigidity results are established that are related to prior work of Bader, Furman, Gelander, Monod \cite{BFGM}, Bader, Rosendal, Sauer \cite{BRS} and Nowak \cite{nowak}.
\end{abstract}

\tableofcontents

%%%%%%%%%%%%%%%%%%%%%%%%%%%%%%%%%%%%%%%%%%%%%%%%%%%%%%%
%%%%%%%%%%%%%%%%%%%%%%%%%%%%%%%%%%%%%%%%%%%%%%%%%%%%%%%
%%%%%%%%%%%%%%%%%%%%%%%%%%%%%%%%%%%%%%%%%%%%%%%%%%%%%%%

\section{Introduction}

The aim of the present paper is to present a primarily algebraic approach to the study of affine isometric group actions on Banach spaces with various convexity and reflexivity properties. Whereas the prevailing approach in the literature is geometric or analytic, the algebraic viewpoint presented here affords significant simplifications to proofs and also permits one to prove results about the cohomology associated with continuous Banach group modules in all degrees. 

Let us begin by recalling that, for a group $G$, a {\em Banach $G$-module} is a pair $(X,\pi)$, where $X$ is a Banach space and $G\overset\pi\curvearrowright X$  is an action by bounded linear automorphisms. The module is furthermore said to be {\em isometric} if every operator $\pi(g)$ is an isometry of $X$. Denote by
$$
X^G=\{x\in X\del \pi(g)x=x, \a g\in G\}
$$
the closed subspace of $G$-invariant vectors. Associated with a Banach $G$-module $(X,\pi)$, one has the cochain complex
$$
C^0(G,X)\overset{\partial^1}\longrightarrow  C^1(G,X)\overset{\partial^2} \longrightarrow C^2(G,X)\overset{\partial^3} \longrightarrow C^3(G,X)\overset{\partial^4} \longrightarrow \cdots,
$$
where $C^n(G,X)$ is the vector space of maps $\prod_{i=1}^nG\overset \phi\longrightarrow X$ and the coboundary operators $C^n(G,X)\overset{\partial^{n+1}}\longrightarrow  C^{n+1}(G,X)$ are defined by the formula
\maths{
(\partial^{n+1}\phi)(g_1,\ldots, g_{n+1})=&-\pi(g_1)\phi(g_2,\ldots,g_{n+1})+(-1)^{n}\phi(g_1,\ldots, g_n)\\
&-\sum_{i=1}^n(-1)^i\phi(g_1,\ldots, g_{i-1},g_ig_{i+1}, g_{i+2},\ldots, g_{n+1}).
}
Furthermore, letting $Z^n(G,X)={\sf ker} \,\partial^{n+1}$ be the space of {\em cocycles} and $B^n(G,X)={\sf rg}\, \partial^{n}$ the space of {\em coboundaries},  the quotient 
$$
H^n(G,X)=Z^n(G,X)/B^n(G,X)
$$ 
is defined to be the {\em cohomology of degree $n$} of the cochain complex. We shall return to the interpretation of $1$-cohomology in Section \ref{affine actions}, but for now it suffices to note that $H^1(G,X)=\{0\}$ if and only if every  affine isometric action $G\overset\alpha\curvearrowright X$ with linear part $\pi$ fixes a point in $X$.

We also equip $C^n(G,X)$ with the topology of pointwise convergence on the discrete group $\prod_{i=1}^nG$ and observe that $C^n(G,X)$ is a locally convex topological vector space. In fact, when $G$ is countable,  $C^n(G,X)$ is a {\em Fr\'echet space}, that is, a locally convex, completely metrisable topological vector space.

If $F$ is any subgroup of $G$, we also let 
$$
{\sf FC}_G(F)=\{g\in G\del \text{the $F$-conjugacy class $g^F$ is finite}\}
$$
and observe that ${\sf FC}_G(F)$ is a subgroup of $G$ normalised by $F$. Of course, ${\sf FC}_G(F)$ contains the centraliser of $F$ in $G$, but will in general be larger. Let also $\Phi={\sf FC}_G(G)$ denote the {\em FC-centre} of $G$.

After presenting various background material on Banach modules, group algebra and cohomology in Sections \ref{sec:banach}, \ref{sec:algebra}, \ref{sec:fc} and \ref{sec:inv}, we begin our paper in earnest in Section \ref{sec:cohomology} by providing an explicit and simple proof of P. Nowak's result \cite{nowak} that when $(X,\pi)$ is a uniformly convex isometric Banach $G$-module without almost invariant unit vectors, then $B^1(G,X)$ is a closed complemented subspace of $C^1(G,X)$ (see Example \ref{exa: compl}). The explicit formula for the associated projection onto $B^1(G,X)$ in turn allows us to give an algebraic proof of the following result, which strengthens a theorem due to U. Bader, A. Furman, T. Gelander and N. Monod \cite{BFGM} that was proved by entirely different means. Observe that the assumption that $G=F\cdot {\sf FC}_G(F)$ generalises the case when $G$ can be written as a direct product $G=F\times M$ of subgroups $F,M\leqslant G$.

\begin{thm}
Suppose $G$ is a group and $F$ is a  subgroup so that $G=F\cdot {\sf FC}_G(F)$. Assume also that $(X,\pi)$ is a uniformly convex isometric Banach $G$-module without almost invariant unit vectors and so that $X^{{\sf FC}_G(F)}=\{0\}$. Then the restriction map 
$$
H^1(G,X)\rightarrow H^1(F,X)
$$
is zero. In other words, if $G\overset\alpha\curvearrowright X$ is an affine isometric action with linear part $\pi$, then $F$ fixes a point on $X$.
\end{thm}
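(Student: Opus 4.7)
My plan to prove this theorem is as follows. Let $c \in Z^1(G,X)$ be an arbitrary cocycle representing a class in $H^1(G,X)$; equivalently, $c$ determines an affine isometric action $\alpha$ of $G$ with linear part $\pi$, and the task is to show that $\alpha|_F$ admits a fixed point, i.e.\ that $c|_F \in B^1(F,X)$. Write $N := {\sf FC}_G(F)$. A preparatory observation is that $N \normal G$: for any $f_0 \in F$ and $n \in N$, the identity $(f_0 n f_0\inv)^F = n^F$ shows $f_0 n f_0\inv \in N$, so $F$ normalizes $N$; combined with $G = FN$, this yields $N \normal G$.

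The core of the argument then reduces to producing a single vector $y \in X$ satisfying $c(n) = y - \pi(n) y$ for every $n \in N$. Given such $y$, set $\tilde c := c - \partial^1 y \in Z^1(G,X)$, so $\tilde c|_N = 0$. For any $g \in G$ and $n \in N$, write $ng = g n'$ with $n' := g\inv n g \in N$; applying the cocycle identity to each factorization yields
\[
\pi(n)\tilde c(g) \;=\; \tilde c(ng) \;=\; \tilde c(g n') \;=\; \tilde c(g).
\]
Hence $\tilde c(g) \in X^N = \{0\}$, so $\tilde c \equiv 0$ and $c = \partial^1 y \in B^1(G,X)$; a fortiori $c|_F \in B^1(F,X)$. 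This reduction uses both hypotheses essentially: normality of $N$ (from $G = FN$) and the triviality of $X^N$.

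To produce such a $y$, I would apply the explicit projection $P : C^1(G,X) \to B^1(G,X)$ constructed in Example~\ref{exa: compl}---the algebraic extraction of Nowak's theorem---to $c$, obtaining $y \in X$ with $\partial^1 y = P(c)$. It remains to verify that this canonical $y$ satisfies $c(n) = y - \pi(n) y$ for every $n \in N$. Heuristically, the formula for $P$ presents $y$ as a weighted ``cohomological mean'' of cocycle values over $G$, and for $n \in N$ the finiteness of the $F$-conjugacy class $n^F$, together with the cocycle identity
$c(fnf\inv) = c(f) + \pi(f) c(n) - \pi(fnf\inv) c(f)$,
should allow one to collapse this averaging on $N$ down to $c(n)$ itself, modulo an error in $X^N = \{0\}$. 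Making this collapse precise---explicitly coupling Nowak's projection with the local finiteness furnished by ${\sf FC}_G(F)$---is the principal obstacle, and is precisely where the \emph{algebraic} viewpoint advertised in the introduction is meant to supplant the geometric arguments of \cite{BFGM}.
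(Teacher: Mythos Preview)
Your reduction is valid as far as it goes: if you could produce $y\in X$ with $c|_N=\partial^1 y|_N$, the normal-subgroup argument you sketch would indeed force $\tilde c\equiv 0$ and hence $c\in B^1(G,X)$. But notice that this would establish $H^1(G,X)=\{0\}$, a conclusion strictly stronger than the theorem, and one that is \emph{false} under the stated hypotheses. For a counterexample, take $G=F\times N$ with $F$ any icc group (so that ${\sf FC}_G(F)=\{1\}\times N$), $N=\F_2$, and $X=\ell^2(\F_2)$ with $F$ acting trivially and $N$ by the left regular representation. All hypotheses hold: $X$ is Hilbert, $X^N=\{0\}$, and non-amenability of $\F_2$ gives the spectral gap. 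Yet one checks that every cocycle on $G$ has $c|_F=0$ automatically and is determined by $c|_N$, so that $H^1(G,X)\cong H^1(\F_2,\ell^2(\F_2))\neq\{0\}$. Consequently there is in general \emph{no} $y$ with $c|_N=\partial^1 y|_N$; the ``collapse'' you hope for is not a technical obstacle but a genuine obstruction, and no amount of refining the heuristic will overcome it.

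The paper's proof works with the restriction to $F$ directly, never passing through $N$. Via Lemma~\ref{existence of xi zeta} one obtains $\xi\in\Delta F$ and $\zeta\in F'\cap\Delta G$ with $\norm{\pi(\xi\zeta)}<1$; after subtracting the Nowak coboundary one may assume $\phi(\xi\zeta)=0$. The commutation $\xi\zeta=\zeta\xi$ and the extended cocycle identity give $\phi(\xi)=\phi(\zeta)=0$, and iteration yields $\phi(\xi^n\eta)\to 0$ for every $\eta\in\Delta F$. A norm computation using strict convexity then forces $\pi(f)\phi(\beta)=\phi(\beta)$ for all $f\in F$ and $\beta\in F'\cap\Delta G$, whence $(I-\pi(\beta))\phi(f)=(I-\pi(f))\phi(\beta)=0$; by Lemma~\ref{X^F'} this places $\phi(f)$ in $X^{{\sf FC}_G(F)}=\{0\}$. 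The asymmetry is essential: the hypothesis controls $X^N$, not $X^F$, and it is precisely $\phi|_F$---not $\phi|_N$---that can be killed.
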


Whereas Nowak's complementation result only requires the uniformly convex $G$-module to not have almost invariant unit vectors, it also solely applies to degree $1$. Nevertheless, with additional hypotheses, we can extend the complementation to all degrees. In the result below, $G'$ denotes the commutant of $G$ in the real group algebra $\R G$, whereas $\Delta G$ is the convex hull of $G$ in $\R G$. The action $G\overset\pi\curvearrowright X$ is also extended linearly to an algebra representation $\R G\overset\pi\longrightarrow \ku L(X)$, where $\ku L(X)$ denotes the Banach algebra of bounded operators on $X$.

\begin{thm}\label{intro:thm-exact}
Suppose $G$ is a group  and $(X,\pi)$ a Banach $G$-module. Assume that $\xi\in G'\cap \Delta G$ is chosen so that the operator $I-\pi(\xi)$ is invertible in $\ku L(X)$.  Then the cochain complex
$$
\{0\}\longrightarrow  C^0(G,X)\overset{\partial^1}\longrightarrow  C^1(G,X)\overset{\partial^2} \longrightarrow C^2(G,X)\overset{\partial^3} \longrightarrow C^3(G,X)\overset{\partial^4} \longrightarrow \cdots
$$
is split exact. In other words, $B^n(G,X)=Z^n(G,X)$ and $B^n(G,X)$ is complemented in $C^n(G,X)$ for all $n\geqslant 0$.
\end{thm}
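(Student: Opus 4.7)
The plan is to construct an explicit continuous contracting homotopy $h^n : C^n(G,X) \to C^{n-1}(G,X)$ (with $h^0 = 0$) satisfying $\partial^n h^n + h^{n+1}\partial^{n+1} = I$ on each $C^n(G,X)$. Once such $h$ exists, both conclusions of the theorem follow routinely: any cocycle $\phi \in Z^n(G,X)$ satisfies $\phi = \partial^n(h^n\phi) + h^{n+1}(\partial^{n+1}\phi) = \partial^n(h^n\phi) \in B^n(G,X)$, yielding exactness; and the operator $P^n := \partial^n \circ h^n$ is a continuous idempotent endomorphism of $C^n(G,X)$ with image $B^n(G,X)$ (idempotence $(P^n)^2 = P^n$ reduces to $\partial^{n+1}\partial^n = 0$ via the homotopy identity), giving complementation. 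The degree-$0$ case is handled directly: for any $x \in X^G$, $\pi(\xi)x = \sum_i t_i x = x$, so $(I - \pi(\xi))x = 0$, forcing $x = 0$ by invertibility, and hence $Z^0 = X^G = \{0\} = B^0$.

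To construct $h^n$, observe that the centrality of $\xi$ in $\R G$ ensures $\pi(\xi)$ commutes with every $\pi(g)$, so the pointwise action of $I - \pi(\xi)$ on each $C^n(G,X)$ commutes with $\partial^{n+1}$; by hypothesis $(I - \pi(\xi))^{-1}$ is bounded on $X$, so it defines a continuous invertible operator on every $C^n(G,X)$. Extend each $\phi \in C^n(G,X)$ multilinearly to a map $(\R G)^n \to X$, and define $K^n : C^n(G,X) \to C^{n-1}(G,X)$ by
$$
(K^n\phi)(g_1, \ldots, g_{n-1}) = \sum_{j=0}^{n-1} (-1)^j \phi(g_1, \ldots, g_j, \xi, g_{j+1}, \ldots, g_{n-1}).
$$
Then set $h^n := (I - \pi(\xi))^{-1} K^n$. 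The required contracting homotopy identity will follow once we establish
$$
\partial^n K^n + K^{n+1} \partial^{n+1} = I - \pi(\xi)
$$
on each $C^n(G,X)$.

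To verify this identity, expand both $\partial^n K^n\phi$ and $K^{n+1}\partial^{n+1}\phi$ using the explicit formulas for $\partial$ and $K$. Each resulting term in which $\xi$ ends up adjacent to some $g_i$ appears in a pair of opposite sign as $\phi(\ldots, \xi g_i, \ldots)$ and $\phi(\ldots, g_i \xi, \ldots)$; centrality of $\xi$ in $\R G$ equates these, so the pair cancels. After these cancellations (together with telescoping of the "multiply-at-adjacent-positions" boundary terms), only two contributions survive: $\phi$, coming from the $(-1)^n \phi(g_1, \ldots, g_n)$ summand of $\partial^{n+1}$ applied with $\xi$ inserted at position $n$; and $-\pi(\xi)\phi$, coming from the $-\pi(g_1)\phi(g_2, \ldots, g_{n+1})$ summand with $\xi$ inserted at position $0$. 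Summing yields $(I - \pi(\xi))\phi$, as required. The principal technical obstacle will be this sign and position bookkeeping, which is a routine but careful combinatorial calculation analogous to the standard bar-resolution acyclicity argument; a direct verification for $n = 1$ and $n = 2$ already exhibits the cancellation pattern that then generalizes uniformly.
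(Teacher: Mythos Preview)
Your proposal is correct and follows essentially the same approach as the paper. The paper factors the argument into two pieces: first it exhibits the operators $R^{n+1}$ (your $K^{n+1}$, with identical formula and signs) and verifies the identity $\partial R + R\partial = I - \pi(\xi)\circ$ via the same combinatorial cancellation you describe; then it invokes a general lemma stating that an invertible null-homotopic cochain map forces split exactness, whose proof amounts precisely to your observation that $(I-\pi(\xi))^{-1}$ commutes with $\partial$ and hence $h = (I-\pi(\xi))^{-1}K$ is a contracting homotopy for the identity. One cosmetic remark: the paper extends cochains multi\emph{affinely} to $\A G$ rather than multilinearly to $\R G$, because the coboundary does not commute with the multilinear extension in general; however, since your $\xi$ lies in $\Delta G\subseteq \A G$, the two extensions agree at every point where you evaluate them, so this causes no trouble in your argument.
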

Of course the major import of split exactness, as opposed to simply vanishing of the cohomology, is the fact that 
there is are continuous inverses to the coboundary operators. Thus, for example, in the setting of Theorem \ref{intro:thm-exact} we may construct a continuous linear operator $Z^2(G,X)\overset{R}\longrightarrow  C^1(G,X)$ that to each cocycle $\phi\in Z^2(G,X)$ associates a cochain $\psi=R\phi$ so that $\phi=\partial^2\psi$.

As an immediate corollary of Theorem \ref{intro:thm-exact}, we have the following result.
\begin{thm}
Suppose $G$ is a group and $(X,\pi)$ is a uniformly convex isometric Banach $G$-module. Let $\Phi$ denote the FC-centre of $G$ and assume that $X$ has no almost invariant unit vectors as a $\Phi$-module. Then the cochain complex
$$
\{0\}\longrightarrow  C^0(G,X)\overset{\partial^1}\longrightarrow  C^1(G,X)\overset{\partial^2} \longrightarrow C^2(G,X)\overset{\partial^3} \longrightarrow C^3(G,X)\overset{\partial^4} \longrightarrow \cdots
$$
is split exact.
\end{thm}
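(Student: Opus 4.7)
The plan is to reduce the theorem to Theorem \ref{intro:thm-exact} by exhibiting an element $\xi \in G'\cap \Delta G$ for which $I-\pi(\xi)$ is invertible in $\ku L(X)$. The natural source of such elements is the FC-centre: for any $g\in \Phi$, the $G$-conjugacy class $g^G$ is finite, and the normalised sum $\frac{1}{|g^G|}\sum_{h\in g^G} h$ lies in the convex hull $\Delta G$ and commutes with every element of $G$, hence sits in $G'\cap \Delta G$. More generally, for any finite $G$-conjugation-invariant subset $C\subset \Phi$, the average $\frac{1}{|C|}\sum_{g\in C}g$ belongs to $G'\cap \Delta G$.

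To choose $C$ wisely, I would first unpack the hypothesis that $X$ has no almost invariant unit vectors as a $\Phi$-module: this provides a finite subset $F\subset \Phi$ and an $\eps>0$ such that for every unit vector $x\in X$ there exists $g\in F$ with $\|\pi(g)x-x\|\geqslant \eps$. I would then define
\[
C=\{e\}\cup \bigcup_{g\in F} g^G,
\]
which is finite (each $g^G$ being finite since $g\in \Phi$) and closed under $G$-conjugation, and set $\xi=\frac{1}{|C|}\sum_{g\in C} g$. By the previous observation, $\xi\in G'\cap \Delta G$.

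The main step is then to show that the operator norm of $\pi(\xi)$ on $X$ is strictly less than $1$, which I would prove via quantitative uniform convexity. Let $\delta(\cdot)$ denote the modulus of uniform convexity of $X$. For any unit vector $x\in X$, pick $g\in F$ with $\|\pi(g)x-x\|\geqslant\eps$; since $\pi(g)x$ and $x=\pi(e)x$ are unit vectors at distance at least $\eps$, uniform convexity gives $\|\pi(g)x+x\|\leqslant 2\bigl(1-\delta(\eps)\bigr)$, while the remaining $|C|-2$ terms $\pi(h)x$ each satisfy $\|\pi(h)x\|\leqslant 1$. Summing and dividing by $|C|$ yields $\|\pi(\xi)x\|\leqslant 1-2\delta(\eps)/|C|$. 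As this bound is uniform in $x$, one obtains $\|\pi(\xi)\|\leqslant 1-2\delta(\eps)/|C|<1$, so $I-\pi(\xi)$ is invertible via the Neumann series and Theorem \ref{intro:thm-exact} delivers split exactness. The main obstacle is the quantitative uniform convexity estimate, and the key bookkeeping trick is to insist that $e\in C$ so that there is always a distinguished pair of unit vectors to which the modulus of convexity can be applied.
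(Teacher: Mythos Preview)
Your proof is correct and follows essentially the same route as the paper: find a finite $G$-conjugation-invariant subset of $\Phi$ witnessing the absence of almost invariant unit vectors, average over it to obtain $\xi\in G'\cap\Delta G$ with $\norm{\pi(\xi)}<1$, and invoke Theorem \ref{intro:thm-exact}. The only cosmetic difference is that the paper packages the norm estimate $\norm{\pi(\xi)}<1$ into Lemma \ref{invertibility} (the implication (1)$\Rightarrow$(4), whose proof likewise inserts the identity into the support), whereas you carry out the modulus-of-convexity computation by hand.
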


For example, this applies to all abelian or FC  groups $G$, in which case one simply has $\Phi=G$.

Similar techniques also suffice to prove vanishing of cohomology under weaker assumptions.

\begin{thm}
Suppose $G$ is a group and  $(X,\pi)$ is a uniformly convex isometric Banach $G$-module. Assume also that $F$ is a subgroup of $G$ so that $X$ has no almost invariant unit vectors as a ${\sf FC}_G(F)$-module. Then the restriction map
$$
H^n(G,X)\to H^n(F,X)
$$
is zero for all $n\geqslant 0$.
\end{thm}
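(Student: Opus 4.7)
Set $M = {\sf FC}_G(F)$. The case $n = 0$ is immediate: since $X$ has no almost invariant unit vectors under the $M$-action, $X^M = \{0\}$, and hence $X^G \subseteq X^M = \{0\}$, so the restriction map $X^G \to X^F$ is zero. For $n \geq 1$, the plan mirrors the strategy of Theorem \ref{intro:thm-exact}: construct an element $\xi \in \R G$ that commutes with every element of $F$ (but not necessarily with all of $G$) and for which $I - \pi(\xi)$ is invertible in $\ku L(X)$, and use it to produce, for each cocycle $\phi \in Z^n(G,X)$, an explicit primitive in $C^{n-1}(F,X)$ of the restriction $\phi|_{F^n}$.

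For the construction of $\xi$, uniform convexity together with the absence of almost invariant unit vectors of $X$ under the $M$-action yields, via the Mazur-type argument already used earlier in the paper for Nowak's complementation result, an element $\eta \in \Delta M$ with $\norm{\pi(\eta)} < 1$. The support of $\eta$ is a finite subset of $M$, and every element of $M$ has finite $F$-conjugacy class, so the orbit of $\eta$ under $F$-conjugation in $\R G$ is finite and contained in $\Delta M$. Averaging over this orbit gives $\xi \in \Delta M$ that satisfies $f\xi = \xi f$ in $\R G$ for every $f \in F$, with $\norm{\pi(\xi)} \leq \norm{\pi(\eta)} < 1$ by convexity of the operator norm and the fact that each $\pi(f)$ is an isometry; in particular, $I - \pi(\xi)$ is invertible.

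Given $\phi \in Z^n(G,X)$, extend $\phi$ multilinearly to $\R G$ in each variable and define $\psi \in C^{n-1}(F,X)$ by the alternating sum of insertions of $\xi$ among $(f_1, \ldots, f_{n-1})$:
\[
\psi(f_1, \ldots, f_{n-1}) = (I - \pi(\xi))^{-1} \sum_{i=0}^{n-1}(-1)^i\, \phi(f_1, \ldots, f_i, \xi, f_{i+1}, \ldots, f_{n-1}).
\]
To verify $\partial^n \psi = \phi|_{F^n}$, one expands the coboundary and applies the $n+1$ instances of the cocycle identity $(\partial^{n+1}\phi)(f_1, \ldots, f_k, \xi, f_{k+1}, \ldots, f_n) = 0$ obtained by inserting $\xi$ in each possible slot, using the commutativity $\xi f_j = f_j\xi$ in $\R G$ to rewrite any occurrence of $\phi(\ldots, \xi f_j, \ldots)$ as $\phi(\ldots, f_j \xi, \ldots)$. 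The resulting terms cancel telescopically, leaving only $(I - \pi(\xi))\phi(f_1, \ldots, f_n)$, which $(I - \pi(\xi))^{-1}$ converts to $\phi(f_1, \ldots, f_n)$.

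The main obstacle is executing this telescopic cancellation cleanly in full generality. The cases $n = 1, 2, 3$ yield to direct calculation (with $n = 1$ being essentially the argument of the first theorem of the introduction), and the general pattern identifies $\psi$ as a $\xi$-deformation of the standard contracting homotopy of the inhomogeneous bar complex, for which the commutations required are precisely those between $\xi$ and elements of $F$, not with all of $G$; this is exactly why the weaker hypothesis on ${\sf FC}_G(F)$ already suffices for the conclusion.
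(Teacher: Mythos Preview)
Your proposal is correct and is essentially the paper's own proof. The paper constructs the same $\xi\in F'\cap\Delta G$ with $\norm{\pi(\xi)}<1$ (by enlarging the support of some $\eta\in\Delta M$ to a union of $F$-conjugacy classes rather than by averaging over the $F$-orbit, but these are equivalent) and then invokes Proposition~\ref{homotopic cochain maps}, which carries out precisely the telescopic cancellation you outline: the restriction map and $\phi\mapsto\pi(\xi)\circ\phi|_F$ are homotopic via the operator $(R^{n}\phi)(f_1,\ldots,f_{n-1})=\sum_{i=1}^{n}(-1)^{i+1}\phi(f_1,\ldots,f_{i-1},\xi,f_i,\ldots,f_{n-1})$, so your explicit primitive $\psi$ is exactly $(I-\pi(\xi))^{-1}R^{n}\phi$.
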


In contradistinction to $Z^n(G,X)$, we observe that $B^n(G,X)$ need not, in general, be closed in $C^n(G,X)$, whereby $H^n(G,X)$ may not be Hausdorff. To counteract this, one often considers the {\em reduced cohomology} 
$$
\ov H^n(G,X)=Z^n(G,X)/\ov {B^n(G,X)},
$$
where $\ov {B^n(G,X)}$ is the closure of ${B^n(G,X)}$ in  $C^n(G,X)$. As for unreduced cohomology, $\ov H^1(G,X)$ can be interpreted in terms of affine isometric actions. Namely, $\ov H^n(G,X)=\{0\}$ if and only if every affine isometric action $G\overset\alpha\curvearrowright X$ with linear part $\pi$ has almost fixed  points in $X$ (see Section \ref{affine actions} for details). The following theorem strengthens the main result of \cite{BRS}.

\begin{thm}
Suppose that $G$ is a group and $(X,\pi)$  a separable reflexive isometric Banach $G$-module. Assume also that $F\leqslant G$ is a subgroup with $X^{{\sf FC}_G(F)}=\{0\}$. Then the restriction map
$$
\ov H^n(G,X)\to \ov H^n(F,X)
$$
is zero for all $n\geqslant 0$. In particular, if $G\overset\alpha\curvearrowright X$ is an affine isometric action with linear part $\pi$, then $F$ has almost fixed points on $X$.
\end{thm}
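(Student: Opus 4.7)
The plan is to adapt the algebraic proof of Theorem~\ref{intro:thm-exact} in an approximate form, replacing the exact inverse $(I-\pi(\xi))^{-1}$ with an operator which is only approximately an inverse on the relevant values of the cocycle. Fix a cocycle $\phi\in Z^n(G,X)$; the goal is to show $\phi|_{F^n}\in\ov{B^n(F,X)}$ in the topology of pointwise convergence. By density, this reduces to: for each finite $F_0\subseteq F$ and each $\eps>0$, produce $\xi\in (F_0)'\cap\Delta G$ and a cochain $\psi\in C^{n-1}(F,X)$ with $\|(\partial^n\psi-\phi)(\bar f)\|<\eps$ for all $\bar f$ in a prescribed finite subset of $F_0^n$. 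Mimicking the construction in Theorem~\ref{intro:thm-exact}, one sets
\maths{\psi(g_1,\ldots,g_{n-1}):=\sum_{i=0}^{n-1}(-1)^i\,\phi(g_1,\ldots,g_i,\xi,g_{i+1},\ldots,g_{n-1}),}
where $\phi$ is extended linearly in each variable. Inserting $\xi$ in each slot of the cocycle identity $\partial^{n+1}\phi=0$ and using $\xi f_i=f_i\xi$ to telescope gives, exactly as in the exact case,
\maths{(\partial^n\psi)(\bar f)=(I-\pi(\xi))\phi(\bar f),}
so that the error is $\pi(\xi)\phi(\bar f)$, and we are reduced to making this small in norm for the finite set $\{v_i:=\phi(\bar f^{(i)})\}\subseteq X$.

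\textbf{Key lemma.} The technical heart is: for any finite $F_0\subseteq F$, any finite $\{v_1,\ldots,v_k\}\subseteq X$, and any $\eps>0$, there exists $\xi\in(F_0)'\cap\Delta G$ with $\|\pi(\xi)v_i\|<\eps$ for all $i$. The natural building blocks are the $\langle F_0\rangle$-conjugation averages
\maths{\xi_h:=\frac{1}{|h^{\langle F_0\rangle}|}\sum_{h'\in h^{\langle F_0\rangle}}h'\in(F_0)'\cap\Delta G,\qquad h\in{\sf FC}_G(F),}
well-defined since the orbit $h^{\langle F_0\rangle}\subseteq h^F$ is finite by hypothesis. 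Consider the weakly compact convex set
\maths{K:=\ov{\mathrm{conv}}^w\{(\pi(\xi)v_1,\ldots,\pi(\xi)v_k):\xi\in(F_0)'\cap\Delta G\}\subseteq X^k,}
which is invariant under the contraction semigroup $\pi((F_0)'\cap\Delta G)$. An application of the Alaoglu--Birkhoff fixed point theorem combined with the vanishing of ${\sf FC}_G(F)$-invariant vectors forces $(0,\ldots,0)\in K$: any semigroup-fixed vector in $K$ must belong to $(X^{{\sf FC}_G(F)})^k=\{0\}$. Mazur's theorem then places $0$ in the norm-closed convex hull, yielding the required $\xi=\sum_j a_j\xi_{h_j}\in(F_0)'\cap\Delta G$.

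\textbf{Assembly and main obstacle.} Separability of $X$ together with a diagonal argument upgrade these finite approximations to a sequence of cochains $\psi_m\in C^{n-1}(F,X)$ with $\partial^n\psi_m\to\phi|_{F^n}$ pointwise, establishing $\phi|_{F^n}\in\ov{B^n(F,X)}$ and hence that the restriction map on $\ov H^n$ vanishes in every degree. The ``in particular'' clause is the case $n=1$ via the paper's dictionary relating $\ov H^1(F,X)=\{0\}$ to the existence of almost fixed points for affine isometric $F$-actions. The principal obstacle is the last step of the key lemma: identifying, for a vector $w_i$ fixed by every $\pi(\xi_h)$ with $h\in{\sf FC}_G(F)$, the fact that $w_i$ is genuinely ${\sf FC}_G(F)$-invariant. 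In uniformly convex spaces this is immediate from strict convexity applied to the averaging operator $\pi(\xi_h)$, but in the merely reflexive setting one must iterate Alaoglu--Birkhoff on the nested weakly compact orbit closures $\ov{\mathrm{conv}}^w\pi({\sf FC}_G(F))w_i\subseteq X$ to extract the ${\sf FC}_G(F)$-invariant vector that, by uniqueness of the Alaoglu--Birkhoff projection, must coincide with $w_i$ itself.
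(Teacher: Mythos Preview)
Your overall architecture is exactly that of the paper: reduce to the approximation problem ``given finitely many values $\phi(\bar f)$, find $\xi$ in an appropriate commutant with $\pi(\xi)\phi(\bar f)$ small,'' and then invoke the homotopy identity $(I-\pi(\xi))\circ\phi|_F\in B^n(F,X)$ (this is Proposition~\ref{homotopic cochain maps}, which you have rederived). The reduction and the algebraic identity are correct, and working with the $\langle F_0\rangle$-commutant rather than the full $F$-commutant is a harmless variation.

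The gap is in your key lemma, precisely at the step you flag as the principal obstacle. You assert that a fixed vector $w$ of the semigroup $\pi\big((F_0)'\cap\Delta G\big)$ must lie in $X^{{\sf FC}_G(F)}$. This is the content of Lemma~\ref{X^F'}, and that lemma genuinely requires strict convexity: if $\pi(\bar B)w=w$ for a class average $\bar B$, one needs the unit sphere to contain no segments in order to conclude $\pi(h)w=w$ for each $h\in B$. In a merely reflexive space this implication can fail. Your proposed repair via ``uniqueness of the Alaoglu--Birkhoff projection'' does not close the gap: that uniqueness says only that the $H$-equivariant projection onto $X^H$ is unique (here it is zero), whereas your $w$ is only fixed by the averaging semigroup, not by $H$ itself, so the projection statement gives no information about $w$.

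The paper resolves exactly this point by invoking Lancien's theorem: a separable reflexive space admits an isometry-invariant locally uniformly convex (hence strictly convex) renorming, so one may assume $X$ is strictly convex without changing $\pi$, and then Lemma~\ref{X^F'} applies. This is where the separability hypothesis is actually used (your diagonal argument is unnecessary; pointwise approximation is already what you have). Once strict convexity is in hand, the paper's Lemma~\ref{preryll} gives the key lemma via a short minimal-norm argument rather than Ryll--Nardzewski, but either route works once strict convexity is available. Insert the renorming step and your proof goes through.
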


%%%%%%%%%%%%%%%%%%%%%%%%%%%%%%%%%%%%%%%%%%%%%%%%%%%%%%%
%%%%%%%%%%%%%%%%%%%%%%%%%%%%%%%%%%%%%%%%%%%%%%%%%%%%%%%
%%%%%%%%%%%%%%%%%%%%%%%%%%%%%%%%%%%%%%%%%%%%%%%%%%%%%%%

\section{Convexity, reflexivity and Banach modules}\label{sec:banach}
In this paper, all Banach spaces will be assumed to be real, that is, are defined over the scalar field $\R$. Nevertheless, most results hold for complex Banach spaces as well with minimal changes to the proofs.

Recall that a Banach space $(X,\norm\cdot)$ is said to be {\em strictly convex} provided that the unit sphere ${\sf S}_X=\{x\in X\del \norm x=1\}$ contains no proper line segment, or equivalently that $\norm{\frac {x+y}2}<1$ for all $x,y\in {\sf S}_X$.

Also, $(X,\norm\cdot)$ is {\em uniformly convex} if, for all $\eps>0$, there is some $\delta>0$ so that 
$$
\NORM{\frac {x+y}2}\leqslant 1-\delta
$$
whenever $x,y\in {\sf S}_X$ satisfy $\norm{x-y}\geqslant \eps$. In this case, we define the {\em modulus of uniform convexity} $\delta\colon [0,2]\to [0,1]$ by
$$
\delta(\eps)=\inf\big\{1-\NORM{\frac{x+y}2}\del x,y\in {\sf S}_X\;\&\; \norm{x-y}\geqslant \eps\big\}.
$$
Alternatively, the uniform convexity can be captured by the modulus
$$
\om(t)=\sup\{  \norm{x-y} \del \norm{x},\norm{y}\leqslant 1\;\&\; \norm{x+y}\geqslant t\}
$$
defined for $t\in [0,2]$. Indeed, $X$ is uniformly convex if and only if $\lim_{t\to 2_-}\om(t)=0$. For a uniformly convex space $X$, let us also note that, for any fixed $n$ and $\eps>0$, there is a $\delta>0$ so that
$$
{\sf diam}\big(\{x_1,\ldots,x_n\}\big)<\eps
$$
whenever $x_1,\ldots, x_n\in {\sf S}_X$ and $\Norm{\frac{x_1+\ldots+x_n}n}>1-\delta$.

In between these two concepts there is the notion of  {locally uniformly convex} spaces. Here a space $(X,\norm\cdot)$ is said to be {\em locally uniformly convex} or {\em locally uniformly rotund} if, for every $x\in {\sf S}_X$ and $\eps>0$, there is a $\delta=\delta(x,\eps)>0$ so that
$$
\NORM{\frac {x+y}2}\leqslant 1-\delta
$$
whenever $y\in {\sf S}_X$ satisfies $\norm{x-y}\geqslant \eps$. In other words, the unit sphere ${\sf S}_X$ is uniformly curved locally at every point $x\in {\sf S}_X$.

There is a very delicate and rich interplay between notions of convexity and notions of reflexivity, but we will just need to mention a couple of fundamental results. Recall first that a {\em renorming} of a Banach space $(X,\norm\cdot)$ is a norm $\triple\cdot$ on $X$ satisfying 
$$
\frac 1K\norm\cdot\leqslant \triple\cdot\leqslant K\norm\cdot
$$
for some constant $K$. In particular, this implies that the norm $\triple\cdot$ is complete on $X$ and that the formal identity $(X,\norm\cdot)\overset{\sf id}\longrightarrow (X,\triple\cdot)$ is an isomorphism of Banach spaces. Note that, since reflexivity is invariant under isomorphism, the reflexivity of $(X,\norm\cdot)$ does not change under renormings.

Secondly, a Banach space $(X,\norm\cdot)$ is said to be {\em super-reflexive} if every ultrapower $X^{\ku U}$ of $X$ is reflexive. By work of P. Enflo \cite{enflo} (see also \cite{fabian}), $(X,\norm\cdot)$ being super-reflexive is  equivalent to $(X,\norm\cdot)$ admitting a uniformly convex renorming and thus super-reflexivity is also independent of the specific choice of norm on $X$.

Suppose $G$ is a group. Then a {\em Banach $G$-module} is a pair $(X,\pi)$, where $X$ is a Banach space and $G\overset\pi\curvearrowright X$  is an action of $G$ by continuous linear automorphisms on $X$. When there is only one action $\pi$  in sight, we shall often suppress it from the discussion and simply say that $X$ is a (Banach) $G$-module.  Furthermore, if the action $\pi$ is by linear isometries of $X$, we will say that $X$ is an {\em isometric} $G$-module.

It is a well-known fact (see, for example, Proposition 2.3 \cite{BFGM} for a proof) that, if $X$ with norm $\norm\cdot$ is a super-reflexive isometric $G$-module, then there is a uniformly convex $G$-invariant renorming $\triple\cdot$ of $X$, i.e., so that $(X,\triple\cdot)$ is a uniformly convex isometric $G$-module.

Less evident is the result of G. Lancien \cite{lancien} stating that, if  $X$ with norm $\norm\cdot$ is a separable reflexive isometric $G$-module, then there is a locally uniformly convex $G$-invariant renorming $\triple\cdot$ of $X$. 
Thus, when dealing with super-reflexive or separable reflexive isometric $G$-modules $(X,\pi)$, one may after appropriate renormings of $X$, but without altering the linear action $\pi$, suppose that these are actually  uniformly convex, respectively locally uniformly convex $G$-modules.

One advantage of working with reflexive Banach modules as opposed to general Banach modules is that a reflexive  isometric Banach module decomposes canonically. In fact, this even happens in a wider context that may be independent of the geometry of the Banach space. To explain this, let us recall that an isometric Banach $G$-module $(X,\pi)$ is {\em weakly almost periodic} if, for every $x\in {\sf S}_X$, the orbit $\pi(G)x\subseteq {\sf S}_X$ is relatively weakly compact in $X$. Note that, since a Banach space is reflexive if and only if the closed unit ball is weakly compact, every isometric reflexive Banach module is automatically weakly almost periodic. The {\em Alaoglu–Birkhoff decomposition theorem} \cite{alaoglu} now states that, if $X$ is a weakly almost periodic isometric Banach $G$-module, then  $X$ admits a $G$-invariant decomposition
$$
X=X^G\oplus X_G
$$
into a direct sum of two closed linear subspaces, namely, the subspace of invariant vectors  
$$
X^G=\{x\in X\del \pi(g)x=x, \;\a g\in G\}
$$ 
and 
$$
X_G=\{x\in X\del 0\in \ov{\sf conv}(\pi(G)x)\}.
$$

Observe that, if $H$ is a normal subgroup of $G$ and $X=X^H\oplus X_H$ is the corresponding decomposition, then both $X^H$ and $X_H$ are $G$-invariant. That this holds for the space of invariant vectors is obvious and, on the other hand, if $x\in X_H$ and $g\in G$, then
$$
0=\pi(g)0\in \pi(g)\big[\ov{\sf conv}(\pi(H)x)\big] =\ov{\sf conv}(\pi(g)\pi(H)x)=\ov{\sf conv}(\pi(H)\pi(g)x),
$$
so also $\pi(g)x\in X_H$.

%%%%%%%%%%%%%%%%%%%%%%%%%%%%%%%%%%%%%%%%%%%%%%%%%%%%%%%
%%%%%%%%%%%%%%%%%%%%%%%%%%%%%%%%%%%%%%%%%%%%%%%%%%%%%%%
%%%%%%%%%%%%%%%%%%%%%%%%%%%%%%%%%%%%%%%%%%%%%%%%%%%%%%%

\section{The group algebra, affine space and simplex}\label{sec:algebra}

Suppose $G$ is a group. Then the {\em group algebra} of $G$ is the free $\R$-vector space $\R G$ over $G$, i.e., the vector space with basis $\{1_g\}_{g\in G}$. Thus, every element $\xi$ of $\R G$ has a unique representation as a linear combination 
$$
\xi=\sum_{g\in G}t_g1_{g},
$$
where, of course, only finitely many coefficients $t_g\in \R$ are non-zero. Alternatively, identifying $1_g$ with the Dirac function at $g$, $\xi$ can be viewed as a finitely supported function $G\overset\xi\longrightarrow \R$. Thus,  elements of $\R G$ add coordinatewise
$$
\sum_{g\in G}t_g1_g+\sum_{g\in G}s_g1_g=\sum_{g\in G}(t_g+s_g)1_g.
$$
Furthermore, elements of $\R G$ multiply as follows
$$
\Big(\sum_{g\in G}t_g1_g\Big)
\cdot
\Big(\sum_{g\in G}s_g1_g\Big)
=\sum_{g,f\in G}(t_gs_f)1_{gf}=\sum_{h\in G}\sum_{gf=h}(t_gs_f)1_{h}.
$$
In other words, viewing $\xi=\sum_{g\in G}t_g1_g$ and $\zeta=\sum_{g\in G}s_g1_g$ as finitely supported functions $G\overset{\xi,\zeta}\longrightarrow \R$, the product $\xi\cdot \zeta$ is just the convolution $\xi\ast \zeta$ defined by
$$
(\xi\ast \zeta)(h)=\sum_{g\in G}\xi(g)\zeta(g\inv h).
$$

We may define the {\em augmentation map} $\R G\overset{\go a}\longrightarrow \R$ by 
$$
{\go a}\Big(
\sum_{g\in G}t_g1_g
\Big)
=\sum_{g\in G}t_g
$$
and note that this is a homomorphism of $\R$-algebras. Therefore, the {\em augmentation ideal}
$$
\M G={\sf ker}\, {\go a}=\{\xi\in \R G\del \sum_{g\in G}\xi(g)=0\}
$$ 
is a two-sided ideal in $\R G$, that is, $\xi\cdot\zeta\in \M G$ whenever either $\xi\in \M G$ or $\zeta\in \M G$. Consider also the set
$$
\A G=\{\xi\in \R G\del {\go a}(\xi)=1\}=\M G+1_g,
$$
where $g\in G$ is any element. Then $\A G$ is neither closed under scalar multiplication nor under addition. On the other hand, if $\xi,\zeta\in \A G$, then 
$$
{\go a}(\xi\cdot\zeta)={\go a}(\xi)\cdot{\go a}(\zeta)=1\cdot 1=1
$$
and so $\xi\cdot\zeta\in \A G$. Similarly, if $\zeta=\sum_{i=1}^nt_i\xi_i$ is an affine combination of $\xi_i\in \A G$, that is, so that $\sum_{i=1}^nt_i=1$, then 
$$
{\go a}(\zeta)=\sum_{i=1}^nt_i{\go a}(\xi_i)=\sum_{i=1}^nt_i\cdot 1=1
$$
and so again $\zeta\in \A G$. This justifies denoting $\A G$  the {\em group affine space} of $G$.

What turns out to be of equal importance in our study is the {\em group simplex} $\Delta G$, which is the convex hull of the basis elements $\{1_g\}_{g\in G}$. In other words,
$$
\Delta G=\{\xi \in \A G\del \xi(g)\geqslant 0 \textrm{ for all }g\in G\}.
$$
Similarly to $\A G$, we see that $\Delta G$ is closed under multiplication and convex combinations.

The group $G$ naturally embeds into the multiplicative semigroup $\R G$ via the identification $g\mapsto 1_g$ and we shall therefore mostly ignore the difference between $G$ and its image in $\R G$. Therefore, the element $\sum_{g\in G}t_g1_g$ will generally simply be written $\sum_{g\in G}t_gg$.

If $F$ is a subgroup of $G$, it is of interest to determine the {\em commutant}\footnote{The notation $F'$ might unfortunately cause some confusion here, since in group theory, $F'$ usually denotes the {\em commutator subgroup} or {\em derived subgroup} of $F$, that is, the group generated by the family of all commutators $[f,g]=f\inv g\inv fg$ for $f,g\in F$.} of $F$ in the algebra $\R G$, that is, the subalgebra
$$
F'=\{\xi \in \R G\del f\cdot \xi=\xi\cdot f \textrm{ for all }f\in F\}.
$$
Obviously, the centraliser ${\sf C}_G(F)=\{g\in G\del gf=fg  \textrm{ for all }f\in F\}$ or rather its image $\{1_g\in \R G\del gf=fg  \textrm{ for all }f\in F\}$ is contained in the commutant and thus so is the linear span, ${\sf span}\big({\sf C}_G(F)\big)$. However, in general, the commutant of $F$ is larger than this. Indeed, suppose that $g\in G$ has a finite {\em $F$-conjugacy class}
$$
B=g^F=\{fgf\inv \del f\in F\}.
$$
Then we may define the {\em $F$-class sum} $\widehat B=\sum_{h\in B}h\in \R G$ and the {\em $F$-class average} $\ov B=\frac 1{|B|}\sum_{h\in B}h\in \Delta G$. 

\begin{lemme}\label{commutant}
Let $F$ be a subgroup of $G$. Then the commutant of $F$ in $\R G$ is
$$
F'={\sf span}\big\{ \widehat B\del B\textrm{ is a finite $F$-conjugacy class in }G\big\}.
$$
Furthermore, $F'\cap \Delta  G$ is a multiplicative subsemigroup of $\R G$  and 
$$
F'\cap \Delta G={\sf conv}\big\{ \ov B\del B\textrm{ is a finite $F$-conjugacy class in }G\big\}.
$$
\end{lemme}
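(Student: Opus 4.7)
The plan is to prove both equalities directly by analyzing the $F$-conjugation action on the coefficient function of an element $\xi = \sum_g t_g g$ of $\R G$. Since each $f \in F$ is invertible in $\R G$, $\xi$ lies in $F'$ if and only if $f \xi f\inv = \xi$ for every $f \in F$. Re-indexing via
$f \xi f\inv = \sum_g t_g \, fgf\inv = \sum_h t_{f\inv h f}\, h$
and comparing coefficients, this is equivalent to $t_{fgf\inv} = t_g$ for all $g \in G$ and $f \in F$; that is, the coefficient function is constant on $F$-conjugacy classes. From here the rest is bookkeeping.

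For the first equality, the inclusion $\supseteq$ is immediate: for a finite $F$-conjugacy class $B$, conjugation by $f \in F$ permutes $B$, so $f \widehat B f\inv = \sum_{h \in B} fhf\inv = \widehat B$, and hence $\widehat B \in F'$. For $\subseteq$, I would take $\xi \in F'$ and observe that, since $\xi$ has finite support and its coefficient function is constant on $F$-conjugacy classes, any class $B$ meeting the support of $\xi$ is contained in this finite support and is therefore itself finite. Grouping terms by conjugacy class then yields $\xi = \sum_B c_B \widehat B$, a finite linear combination indexed by finite $F$-conjugacy classes.

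For the second equality, suppose $\xi = \sum_B c_B \widehat B$ belongs to $\Delta G$. For any $g \in B$, the $g$-coefficient of $\xi$ is $c_B$, so the positivity $\xi(g) \geqslant 0$ forces $c_B \geqslant 0$, and $1 = \sum_{g \in G} \xi(g) = \sum_B c_B |B|$. Setting $t_B = c_B |B| \geqslant 0$, I obtain $\xi = \sum_B t_B \ov B$, a convex combination of class averages. The reverse inclusion is immediate. Finally, $F'$ is a subalgebra of $\R G$ and $\Delta G$ was already observed to be closed under multiplication, so $F' \cap \Delta G$ is a multiplicative subsemigroup. The only conceptually non-trivial point in the whole argument is the observation that the combination of finite support and $F$-invariance of the coefficient function forces every $F$-conjugacy class appearing in the support to be finite; everything else is direct verification.
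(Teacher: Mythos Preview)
Your proof is correct and follows essentially the same approach as the paper's: both reduce membership in $F'$ to the coefficient function being constant on $F$-conjugacy classes via the computation $f\xi f\inv=\sum_h t_{f\inv hf}h$, then read off the span and convex-hull descriptions from this. You are slightly more explicit than the paper about why the relevant conjugacy classes must be finite and about the bookkeeping for the convex combination, but the argument is the same.
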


\begin{proof}
Note first that, if $B$ is a finite $F$-conjugacy class, then
$$
f\cdot \widehat B\cdot f\inv =\sum_{h\in B}fhf\inv=\sum_{h\in B}h=\widehat B
$$
and so $\widehat B\in F'$. Conversely,  suppose that $\xi=\sum_{g\in G}t_gg\in F'$ and that $f\in F$. Then 
$$
\sum_{h\in G}t_hh=\xi=f\cdot \xi\cdot f\inv=\sum_{g\in G}t_gfgf\inv=\sum_{h\in G}t_{f\inv hf}h 
$$
and so $\xi(h)=t_h=t_{f\inv hf}=\xi(f\inv hf)$ for all $h\in G$. It follows that $\xi$ is constant on every $F$-conjugacy class and therefore must be a linear combination of $F$-class sums. 

Note that this also shows that 
$$
F'\cap \Delta G={\sf conv}\big\{\ov B\del B\textrm{ is a finite $F$-conjugacy class in }G\big\}.
$$
Finally, because both $F'$ and $\Delta G$ are closed under multiplication, so is their intersection. That is, $F'\cap \Delta$ is a multiplicative subsemigroup of the algebra $\R G$.
\end{proof}

One last thing to note is that, if $G$ is a group and $(X,\pi)$ a Banach $G$-module, then the module action $G\overset\pi\curvearrowright X$ extends canonically to a module action of the algebra $\R G$. More precisely, seeing $\pi$ as a map $G\overset\pi\longrightarrow \ku L(X)$ into the algebra $\ku L(X)$ of bounded linear operators on $X$, $\pi$ extends uniquely to an algebra representation
$$
\R G\overset\pi\longrightarrow \ku L(X)
$$
simply by setting $\pi\big(\sum_it_ig_i\big)=\sum_it_i\pi(g_i)$.

%%%%%%%%%%%%%%%%%%%%%%%%%%%%%%%%%%%%%%%%%%%%%%%%%%%%%%%
%%%%%%%%%%%%%%%%%%%%%%%%%%%%%%%%%%%%%%%%%%%%%%%%%%%%%%%

\section{Groups with finite conjugacy classes}\label{sec:fc}

\begin{defi}
Suppose $F$ is a subgroup of a group $G$. We then let 
$$
{\sf FC}_G(F)=\{g\in G\del \text{the $F$-conjugacy class } g^F \text{ is finite}\}.
$$
\end{defi}

The basic observation in this context is that ${\sf FC}_G(G)$ is a subgroup of $G$ that is evidently normalised by $F$, i.e., $F\leqslant {\sf N}_G\big({\sf FC}_G(F)\big)$. Indeed, if $g^F=\{h_1,\ldots, h_n\}$, then $(g\inv)^F=\{h_1\inv,\ldots, h_n\inv\}$. Also,  if $g_1,\ldots, g_n\in {\sf FC}_G(F)$, then 
$$
(g_1\cdots g_n)^F\subseteq g_1^F\cdots g_n^F
$$
and the latter set is a finite product of finite sets and therefore finite. Thus $g_1\cdots g_n\in {\sf FC}_G(F)$

Observe also that,  ${\sf FC}_G(F)$ consists exactly of the $g\in G$ so that the centraliser subgroup ${\sf C}_F(g)=\{f\in F\del fg=gf\}$ has finite index in $F$. In particular, if $H$ is a finitely generated subgroup of ${\sf FC}_G(F)$, then the centraliser 
$$
{\sf C}_F(H)=\{f\in F\del fh=hf, \a h\in H\}
$$ 
is also of finite index in $F$. If $H$ is no longer finitely generated, this may no longer be the case.

Recall that, if $G$ is a group, the {\em FC-centre} of $G$ is the normal subgroup 
$$
\Phi={\sf FC}_G(G) =\{g\in G\del g \text{ has a finite conjugacy class } \}
$$ 
and $G$ is called an {\em FC-group} in case $G=\Phi$, i.e., if every conjugacy class is finite. Note that, if $G$ is any group and $F$ is a subgroup, then $K=F\cdot {\sf FC}_G(F)$ is also a subgroup of $G$, but the FC-centre of $K$ may in general be smaller than ${\sf FC}_G(F)$, since ${\sf FC}_G(F)$ may not itself be an FC-group.

\begin{exa}
Let $X_1, X_2, X_3, \ldots$ be a sequence of disjoint finite sets and let $\F_\infty$ be the free group on the set of generators $\bigcup_nX_n$. Note that $\prod_n{\rm Sym}(X_n)$ acts by automorphisms on $\F_\infty$ by permuting the generators. Hence, if $\Gamma$ is any subgroup of $\prod_n{\rm Sym}(X_n)$, we can form the semidirect product $G=\F_\infty\rtimes \Gamma$, in which $\F_\infty$ is normal and $\F_\infty\leqslant {\sf FC}_G(\Gamma)$. In particular, $G=\Gamma\cdot {\sf FC}_G(\Gamma)$.
\end{exa}

\begin{lemme}\label{X^F'}
Suppose that $G$ is a group, that $F$ is a subgroup of $G$ and that $(X,\pi)$ is a strictly convex isometric Banach $G$-module. Then
$$
X^{{\sf FC}_G(F)}=\{x\in X\del \pi(\xi)x=x \textrm{ for all }\xi\in F'\cap \Delta G\}.
$$
\end{lemme}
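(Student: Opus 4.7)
The plan is to prove the two inclusions separately, with Lemma~\ref{commutant} --- specifically its description of $F'\cap \Delta G$ as the convex hull of the finite $F$-class averages $\overline B$ --- as the algebraic backbone, and with strict convexity of $X$ entering at exactly one place.

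For the inclusion $(\subseteq)$, I would observe that if $x\in X^{{\sf FC}_G(F)}$ and $B$ is any finite $F$-conjugacy class of an element $g\in {\sf FC}_G(F)$, then every $h\in B$ also lies in ${\sf FC}_G(F)$ (its $F$-conjugacy class being $B$ again), so $\pi(h)x=x$ for each $h\in B$ and hence $\pi(\overline B)x=\frac 1{|B|}\sum_{h\in B}\pi(h)x=x$. Writing an arbitrary $\xi\in F'\cap \Delta G$ as a convex combination $\sum_it_i\overline{B_i}$ of such averages and using linearity of $\pi$, one concludes $\pi(\xi)x=x$. No convexity hypothesis on $X$ is used in this direction.

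For the reverse inclusion $(\supseteq)$, I would fix $g\in {\sf FC}_G(F)$ and set $B=g^F$. Then $\overline B\in F'\cap \Delta G$, so the hypothesis on $x$ gives
\[
x=\pi(\overline B)x=\frac 1{|B|}\sum_{h\in B}\pi(h)x.
\]
Each $\pi(h)x$ has norm $\|x\|$ since $\pi$ is isometric, so this realises $x$ as the centroid of finitely many vectors all of norm $\|x\|$. Strict convexity of $X$ then forces all of them to coincide, and since $g=ege^{-1}\in B$, in particular $\pi(g)x=x$, which is what is needed.

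The one genuinely geometric step --- and the closest thing to an obstacle --- is the use of strict convexity in the previous paragraph, namely the standard fact that in a strictly convex Banach space, an average $\frac 1n(y_1+\cdots+y_n)$ of vectors of common norm $r$ can itself have norm $r$ only when $y_1=\cdots=y_n$. This is proved by a short induction on $n$: the triangle inequality first forces $\bigl\|\frac 1{n-1}(y_2+\cdots+y_n)\bigr\|=r$ as well, after which the two-vector definition of strict convexity applied to $y_1$ and this average completes the argument. With this fact in hand, the proof of the lemma is entirely formal.
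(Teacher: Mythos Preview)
Your proof is correct and follows essentially the same route as the paper's: both directions rest on Lemma~\ref{commutant}'s description of $F'\cap\Delta G$ as the convex hull of the $F$-class averages $\overline B$, and the reverse inclusion uses strict convexity in exactly the same way to force $\pi(h)x=x$ for every $h$ in a finite $F$-conjugacy class. The only cosmetic difference is that the paper normalises to a unit vector while you work with arbitrary norm, and you spell out the $n$-vector strict convexity fact that the paper takes for granted.
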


\begin{proof}
Suppose first that $x\in X^{{\sf FC}_G(F)}$. To see that $\pi(\xi)x=x$ for all $\xi\in F'\cap \Delta G$,  it suffices, by Lemma \ref{commutant}, to show that $\pi\big(\ov B\big)x=x$ for all finite $F$-conjugacy classes $B$. But such $B$ are subsets of ${\sf FC}_G(F)$ and therefore
$$
\pi(\ov B)x=\frac 1{|B|}\sum_{g\in B}\pi(g)x=\frac 1{|B|}\sum_{g\in B}x=x.
$$

Conversely, suppose $x\in X$ is a unit vector so that $\pi(\xi)x=x$ for all $\xi\in F'\cap \Delta G$. Assume also that $h\in {\sf FC}_G(F)$ and let $B=h^F$ be the $F$-conjugacy class of $h$. Then, by Lemma \ref{commutant},  $\ov B\in F'\cap \Delta G$ and therefore
$$
x=\pi(\ov B)x=\frac 1{|B|}\sum_{g\in B}\pi(g)x.
$$
Because $\norm{\pi(g)x}=1$ for all $g\in G$ and $X$ is strictly convex, we find that $x=\pi(g)x$ for all $g\in B$. In particular, $\pi(h)x=x$, showing that $x\in X^{{\sf FC}_G(F)}$.  
\end{proof}

%%%%%%%%%%%%%%%%%%%%%%%%%%%%%%%%%%%%%%%%%%%%%%%%%%%%%%%%%%
%%%%%%%%%%%%%%%%%%%%%%%%%%%%%%%%%%%%%%%%%%%%%%%%%%%%%%%%%%
%%%%%%%%%%%%%%%%%%%%%%%%%%%%%%%%%%%%%%%%%%%%%%%%%%%%%%%%%%
%%%%%%%%%%%%%%%%%%%%%%%%%%%%%%%%%%%%%%%%%%%%%%%%%%%%%%%%%%
%%%%%%%%%%%%%%%%%%%%%%%%%%%%%%%%%%%%%%%%%%%%%%%%%%%%%%%%%%

\section{Almost invariant unit vectors}\label{sec:inv}

\begin{defi}
Let $G$ be a group and  $(X,\pi)$ an isometric Banach $G$-module. We say that $(X,\pi)$ has {\em  almost invariant unit vectors} if, for all finite subsets $E\subseteq G$ and $\eps>0$, there is some $x\in {\sf S}_X$ so that
$$
\norm{x-\pi(g)x}<\eps
$$
for all $g\in E$.
\end{defi}

\begin{lemme}\label{invertibility}
Let $G$ be a group and  $(X,\pi)$ a uniformly convex isometric Banach $G$-module. Then the  following conditions are equivalent,
\begin{enumerate}
\item $(X,\pi)$  does not have almost invariant unit vectors,
\item the operator $I-\pi(\xi)$ is invertible for some $\xi \in \Delta G$,
\item $\norm{\pi(\xi)}<1$ for some $\xi\in \Delta G$,
\item there is a finite set $E\subseteq G$ so that $\norm{\pi(\xi)}<1$ for all $\xi\in \Delta G$  with $E\subseteq {\sf supp}(\xi)$. 
\end{enumerate}
\end{lemme}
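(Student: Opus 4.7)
The plan is to prove the cycle of implications $(1) \Rightarrow (4) \Rightarrow (3) \Rightarrow (2) \Rightarrow (1)$, where the hypothesis of uniform convexity is used only in the step $(1) \Rightarrow (4)$; the other three implications are operator-theoretic and rely only on $\pi$ being isometric.

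The implication $(4) \Rightarrow (3)$ is immediate, since $\xi = \frac 1{|E|} \sum_{g \in E} g \in \Delta G$ is an element with $E \subseteq {\sf supp}(\xi)$. For $(3) \Rightarrow (2)$, I would observe that $\|\pi(\xi)\| < 1$ makes the Neumann series $\sum_{n \geqslant 0} \pi(\xi)^n$ convergent in operator norm in $\ku L(X)$, providing the inverse of $I - \pi(\xi)$. For $(2) \Rightarrow (1)$, setting $E = {\sf supp}(\xi)$ and $C = \|(I - \pi(\xi))\inv\|$, any unit vector $x \in {\sf S}_X$ would satisfy
$$
1 = \|x\| \leqslant C \cdot \|(I - \pi(\xi)) x\| \leqslant C \sum_{g \in E} \xi(g) \|x - \pi(g) x\| \leqslant C \cdot \max_{g \in E} \|x - \pi(g) x\|,
$$
so that the pair $(E, \frac 1 {2C})$ witnesses the failure of almost invariance.

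The substance lies in $(1) \Rightarrow (4)$. Let a finite $E_0 \subseteq G$ and $\eps_0 > 0$ witness the failure of almost invariance and, enlarging $E_0$ if necessary, assume $1 \in E_0$. For every $x \in {\sf S}_X$, the collection of unit vectors $\{\pi(g) x\}_{g \in E_0}$ contains $x$ itself together with some $\pi(g)x$ at distance at least $\eps_0$ from $x$, so its diameter is at least $\eps_0$. The quantitative uniform convexity statement recalled in Section \ref{sec:banach} then produces some $\delta_0 > 0$, depending only on $|E_0|$ and $\eps_0$, such that
$$
\NORM{\frac 1 {|E_0|} \sum_{g \in E_0} \pi(g) x} \leqslant 1 - \delta_0
$$
for every $x \in {\sf S}_X$. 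Setting $\xi_0 = \frac 1 {|E_0|} \sum_{g \in E_0} g \in \Delta G$, this reads $\|\pi(\xi_0)\| \leqslant 1 - \delta_0$. I then claim that $E = E_0$ works in $(4)$: given $\xi \in \Delta G$ with $E_0 \subseteq {\sf supp}(\xi)$, set $\eta = \min_{g \in E_0} \xi(g) > 0$ and $s = \eta |E_0| \leqslant 1$. The element $\xi - s \xi_0$ has non-negative coefficients summing to $1 - s$, so $\xi = s \xi_0 + (1 - s) \xi'$ for some $\xi' \in \Delta G$ (the case $s = 1$ giving simply $\xi = \xi_0$), whence
$$
\|\pi(\xi)\| \leqslant s \|\pi(\xi_0)\| + (1-s) \|\pi(\xi')\| \leqslant s(1 - \delta_0) + (1-s) = 1 - s \delta_0 < 1.
$$

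The main conceptual subtlety is that the modulus $\delta_0$ of uniform convexity may be chosen depending on $|E_0|$ but independently of $\xi$, even though the minimum coefficient $\eta$ appearing in $\xi$ can be arbitrarily small; the decomposition $\xi = s \xi_0 + (1 - s) \xi'$ sidesteps this issue by confining the uniform convexity estimate to the single averaged element $\xi_0$, while the residue $\xi'$ merely contributes operator norm at most $1$.
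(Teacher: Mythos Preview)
Your proof is correct and follows the same cycle $(1)\Rightarrow(4)\Rightarrow(3)\Rightarrow(2)\Rightarrow(1)$ as the paper, with the three easy implications handled in essentially the same way.

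The only genuine difference is in the execution of $(1)\Rightarrow(4)$. The paper applies uniform convexity directly to each admissible $\xi$: writing $\pi(\xi)x$ as a convex combination of unit vectors of diameter $\geqslant\eps$, it obtains a bound $\|\pi(\xi)x\|\leqslant 1-\delta$ where $\delta=\delta(\eps,m,\lambda)$ is allowed to depend on $m=|{\sf supp}(\xi)|$ and the minimal coefficient $\lambda$ of $\xi$. This is enough, since condition (4) only asks that $\|\pi(\xi)\|<1$ for each such $\xi$, not for a uniform gap. Your route instead invokes uniform convexity only once, on the uniform average $\xi_0$ over $E_0$, and then handles a general $\xi$ by the convex decomposition $\xi=s\xi_0+(1-s)\xi'$ together with the trivial bound $\|\pi(\xi')\|\leqslant 1$. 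This has the minor advantage of appealing only to the specific form of uniform convexity for \emph{equal-weight} averages that the paper actually records in Section~\ref{sec:banach}, whereas the paper's own argument tacitly uses the (true, but not explicitly stated) variant for arbitrary convex combinations with a controlled minimal coefficient. Either way the conclusion is the same.
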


\begin{proof}
Clearly, (4)$\saa$(3).  That (3)$\saa$(2) follows from considering the Carl Neumann series 
$$
\big(I-\pi(\xi)\big)\inv=\sum_{n=0}^\infty\pi(\xi)^n,
$$
which is valid whenever $\norm{\pi(\xi)}<1$. Similarly, that (2)$\saa$(1) is clear. Because, if $I-\pi(\xi)$ is invertible, then it is bounded away from $0$, that is, 
$$
\norm{x-\pi(\xi)x}=\Norm{\big(I-\pi(\xi)\big)x}\geqslant \eps \norm x
$$ 
for some $\eps>0$ and all $x\in X$. So, if $\xi=\sum_{i=1}^k\lambda_ig_i$, then no vector $x\in {\sf S}_X$ can satisfy $\norm{x-\pi(g_i)x}<\eps$ for all $i$.

Finally, to see that (1)$\saa$(4), suppose $g_1,\ldots, g_n\in G$ and $\eps>0$ are chosen so that no unit vector $x$ satisfies $\norm{x-\pi(g_i)x}<\eps$ for all $i$. This means that, if $E=\{1, g_1,\ldots, g_n\}$, then
$$
{\sf diam}\big(\{\pi(g)x\del g\in E\}\big)\geqslant \eps
$$
for all $x\in {\sf S}_X$. Assume now that $\xi \in \Delta G$ satisfies $E\subseteq {\sf supp}(\xi)$. Then, if $x\in {\sf S}_X$, 
$$
\pi(\xi)x=\sum_{g\in G}\xi(g)\pi(g)x
$$ 
is a convex combination of a subset of ${\sf S}_X$ of radius at least $\eps$, of cardinality at most $m=\big|{\sf supp}(\xi)\big|$ and with minimal positive coefficient greater than or equal to $\lambda=\min\{\xi(g)\del \xi(g)>0\}$. Because $X$ is  uniformly convex, it follows that there is some $\delta=\delta(\eps,m,\lambda)>0$ so that
$$
\norm{\pi(\xi)x}<1-\delta
$$
for all $x\in {\sf S}_X$, that is, $\norm{\pi(\xi)}\leqslant1-\delta$.
\end{proof}

\begin{lemme}\label{existence of xi zeta}
Let $G$ be a group and $F$ a subgroup so that $G=F\cdot {\sf FC}_G(F)$. Assume also that $(X,\pi)$ is a uniformly convex isometric Banach $G$-module without almost invariant unit vectors.
 Then 
$$
\norm{\pi(\xi)\pi(\zeta)}<1
$$ 
for some $\xi\in \Delta F$ and $\zeta\in F'\cap \Delta G$.
\end{lemme}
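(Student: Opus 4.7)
My plan is to apply Lemma \ref{invertibility}(4), which --- since $(X,\pi)$ has no almost invariant unit vectors --- supplies a finite set $E\subseteq G$ with the property that $\norm{\pi(\eta)}<1$ whenever $\eta\in \Delta G$ satisfies $E\subseteq {\sf supp}(\eta)$. Because $\pi$ is an algebra representation, the problem reduces to producing some $\xi\in \Delta F$ and some $\zeta\in F'\cap \Delta G$ whose product $\xi\cdot\zeta\in \Delta G$ has $E$ in its support; then $\norm{\pi(\xi)\pi(\zeta)}=\norm{\pi(\xi\cdot\zeta)}<1$.

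To build $\xi$ and $\zeta$, I would invoke the hypothesis $G=F\cdot {\sf FC}_G(F)$ to decompose each $g\in E$ as $g=f_g\cdot h_g$ with $f_g\in F$ and $h_g\in {\sf FC}_G(F)$. Since $h_g\in {\sf FC}_G(F)$, its $F$-conjugacy class $B_g=h_g^F$ is finite, so by Lemma \ref{commutant} the class average $\ov{B_g}$ lies in $F'\cap \Delta G$. I would then set
\[
\xi=\frac{1}{|E|}\sum_{g\in E}f_g\in \Delta F
\qquad\text{and}\qquad
\zeta=\frac{1}{|E|}\sum_{g\in E}\ov{B_g},
\]
where $\zeta\in F'\cap \Delta G$ by convexity of this intersection (Lemma \ref{commutant}). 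Observe that by construction $\xi(f_g)>0$ and $\zeta(h_g)\geqslant \frac{1}{|E|\,|B_g|}>0$ for every $g\in E$, since $h_g\in B_g$.

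The key verification is that $E\subseteq {\sf supp}(\xi\cdot\zeta)$. Indeed, for each $g\in E$ the factorisation $f_g\cdot h_g=g$ contributes the strictly positive term $\xi(f_g)\zeta(h_g)$ to the coefficient of $g$ in the convolution, and no cancellation can arise because $\xi\cdot\zeta\in \Delta G$ has nonnegative coefficients. Closure of $\Delta G$ under multiplication then gives $\xi\cdot\zeta\in \Delta G$, and Lemma \ref{invertibility}(4) closes the argument. The only mild subtlety is this support computation for the product; beyond that, the proof is a straightforward assembly of Lemmas \ref{commutant} and \ref{invertibility}, together with the defining decomposition $G=F\cdot {\sf FC}_G(F)$.
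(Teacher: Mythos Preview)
Your proof is correct and follows essentially the same approach as the paper's own argument. The only cosmetic difference is in how $\zeta$ is constructed: the paper first collects the $h_g$ into a finite set $B\subseteq {\sf FC}_G(F)$, enlarges $B$ to be a union of $F$-conjugacy classes, and then takes $\zeta$ to be the uniform average over $B$, whereas you write $\zeta$ directly as a convex combination of class averages $\ov{B_g}$; both yield an element of $F'\cap\Delta G$ with the required support property, and the closing appeal to Lemma~\ref{invertibility}(4) via nonnegativity of coefficients is identical.
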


\begin{proof}
By Lemma \ref{invertibility} there is a finite set $E\subseteq G$ so that $\norm{\pi(\xi)}<1$ for all $\xi\in \Delta G$  with $E\subseteq {\sf supp}(\xi)$. Also, as $G=F\cdot {\sf FC}_G(F)$, we can find finite sets $A\subseteq F$ and $B\subseteq {\sf FC}_G(F)$ so that $E\subseteq AB$. Enlarging $B$ if necessary, we may assume that $B$ is a union of $F$-conjugacy classes. Letting $\xi=\frac 1{|A|}\sum_{f\in A}f\in \Delta F$ and $\xi=\frac 1{|B|}\sum_{g\in B}g$, we find that
$$
\xi\zeta=\frac 1{|A|\cdot |B|}\sum_{f\in A}\sum_{g\in B}fg
$$
and so $E\subseteq AB\subseteq{\sf supp}\,(\xi\zeta)$. It follows that $\norm{\pi(\xi)\pi(\zeta)}=\norm{\pi(\xi\zeta)}<1$. Observe also that, by Lemma \ref{commutant}, we have $\xi\in F'\cap \Delta G$.
\end{proof}

%%%%%%%%%%%%%%%%%%%%%%%%%%%%%%%%%%%%%%%%%%%%%%%%%%%%%%%
%%%%%%%%%%%%%%%%%%%%%%%%%%%%%%%%%%%%%%%%%%%%%%%%%%%%%%%
%%%%%%%%%%%%%%%%%%%%%%%%%%%%%%%%%%%%%%%%%%%%%%%%%%%%%%%
%%%%%%%%%%%%%%%%%%%%%%%%%%%%%%%%%%%%%%%%%%%%%%%%%%%%%%%
%%%%%%%%%%%%%%%%%%%%%%%%%%%%%%%%%%%%%%%%%%%%%%%%%%%%%%%
%%%%%%%%%%%%%%%%%%%%%%%%%%%%%%%%%%%%%%%%%%%%%%%%%%%%%%%

\section{Cohomology}\label{sec:cohomology}
For the following discussion, consider a group $G$ and a Banach $G$-module $(X,\pi)$. This gives rise to the standard {\em cochain complex}, which we define as follows. First, for every $n\geqslant 0$, let
$$
C^n(G,X)
$$
denote the vector space of {\em $n$-cochains}, that is, $C^n(G,X)$ is the collection of all maps $G^n\overset \phi\longrightarrow X$. Observe that, because $G^0=\{\tom\}$, every $\phi \in C^0(G,X)$ can be identified with its unique value $\phi(\tom)\in X$ and we can therefore identify $C^0(G,X)$ with $X$ itself. Define now a sequence of linear operators
$$
\{0\}\overset{\partial^0}\longrightarrow C^0(G,X)\overset{\partial^1}\longrightarrow C^1(G,X)\overset{\partial^2}\longrightarrow C^2(G,X)\overset{\partial^3}\longrightarrow \ldots
$$
by the formula 
\maths{
(\partial^{n+1}\phi)(g_1,\ldots, g_{n+1})=&-\pi(g_1)\phi(g_2,\ldots,g_{n+1})+(-1)^{n}\phi(g_1,\ldots, g_n)\\
&-\sum_{i=1}^n(-1)^i\phi(g_1,\ldots, g_{i-1},g_ig_{i+1}, g_{i+2},\ldots, g_{n+1}).
}
In particular, for $x\in X=C^0(G,X)$, we have 
$$
(\partial^1x)(g)=x-\pi(g)x,
$$
whereas, for $\phi\in C^1(G,X)$, we have
$$
(\partial^2\phi)(g,f)=\phi(gf)-\pi(g)\phi(f)-\phi(g).
$$
A straightforward computation shows that $\partial^{n+1}\circ\partial^n=0$. Also, when no confusion is possible, we shall often drop the index $n$ from $\partial^n$.

For all $n\geqslant 0$, let 
$$
Z^n(G,X)={\sf ker}\;\partial^{n+1}
$$ 
denote the space of {\em $n$-cocycles} and
$$
B^{n}(G,X)={\sf rg}\;\partial^{n}
$$
the space of {\em $n$-coboundaries}. From the above, note that $B^0(G,X)=\{0\}$ and $Z^0(G,X)=X^G$, whereas
$$
Z^1(G,X)=\{\phi\colon G\to X\del \phi(gf)=\pi(g)\phi(f)+\phi(g) \text{ for all }g,f\in G\}.
$$
For $n\geqslant 0$, the {\em $n$-cohomology} of the $G$-Banach module $(X,\pi)$ is defined to be the quotient vector space 
$$
H^n(G,X)=\frac{Z^n(G,X)}{B^n(G,X)}.
$$
We shall return to its interpretation in Section \ref{affine actions}.

Observe also that $C^n(G,X)$ can be identified with the product space $\prod_{G^n}X$ and therefore is equipped with the corresponding Tychonoff product topology. Furthermore, $Z^n(G,X)$ is a closed linear subspace of $C^n(G,X)$, because
$$
Z^n(G,X)=\bigcap_{(g_1,\ldots,g_{n+1})\in G^n}\Big\{\phi\in \prod_{G^n}X\Del (\partial^{n+1}\phi)(g_1,\ldots,g_{n+1})=0\Big\}.
$$
In particular, when $G$ is a countable group, both $C^n(G,X)$ and $Z^n(G,X)$ are Fr\'echet spaces.

\begin{exa}[Complementation of $1$-cohomology]\label{exa: compl}
Observe that, for any element $\xi =\sum_it_ig_i\in \Delta G$, the operator $I-\pi(\xi)$ on $X$ factors through $C^1(G,X)$ as follows
$$
\begin{tikzcd}
X\arrow{dr}{\partial^1}\arrow{rr}{I-\pi(\xi)}      & &       X      \\
 &C^1(G,X)\arrow{ur}{R_\xi} &\\
\end{tikzcd}
$$
where $R_\xi$ is the operator given by 
$$
R_\xi(\phi)=\sum_it_i\phi(g_i).
$$ 
Indeed, simply note that
$$
\big(R_\xi\circ \partial^1\big)(x)
=\sum_it_i\big(\partial^1x\big)(g_i)
=\sum_it_i\big(x-\pi(g_i)x\big)
=x-\pi(\xi)x.
$$

Suppose now that $\xi$ is chosen so that $I-\pi(\xi)$ is invertible. For example, by Lemma \ref{invertibility}, this can be achieved when $X$ is uniformly convex and has no almost invariant unit vectors. In this case,
$$
P=\partial^1\circ \big(I-\pi(\xi)\big)\inv \circ R_\xi
$$
defines a continuous idempotent operator on the topological vector space $C^1(G,X)$, because
$$
P^2=\partial^1\circ \big(I-\pi(\xi)\big)\inv \circ R_\xi\circ \partial^1\circ \big(I-\pi(\xi)\big)\inv \circ R_\xi=\partial^1\circ \big(I-\pi(\xi)\big)\inv \circ R_\xi=P.
$$
It follows that $P$ is a continuous projection onto its image $B^1(G,X)$ and also that $B^1(G,X)={\sf ker}\,(I-P)$ is a closed subspace of $C^1(G,X)$. Note also that, because $I-\pi(\xi)$ is invertible, no unit vector is invariant under $\pi(\xi)$ and so $X^G=\{0\}$ and $\partial^1$ is injective. Therefore, ${\sf ker}\,P={\sf ker}\,R_\xi$ and the space of cochains decomposes as a topological direct sum
$$
C^1(G,X)={\sf ker}\,(I-P)\oplus{\sf ker}\,P=B^1(G,X)\oplus {\sf ker}\,R_\xi.
$$
Furthermore, because $B^1(G,X)\subseteq Z^1(G,X)$, this shows that $B^1(G,X)$ is complemented in $Z^1(G,X)$ and hence also that 
$$
Z^1(G,X)\,\iso\, B^1(G,X)\oplus H^1(G,X).
$$
This is the main result of P. Nowak's paper \cite{nowak}.
\end{exa}

%%%%%%%%%%%%%%%%%%%%%%%%%%%%%%%%%%%%%%%%%%%%%%%%%%%%%%%
%%%%%%%%%%%%%%%%%%%%%%%%%%%%%%%%%%%%%%%%%%%%%%%%%%%%%%%
%%%%%%%%%%%%%%%%%%%%%%%%%%%%%%%%%%%%%%%%%%%%%%%%%%%%%%%
%%%%%%%%%%%%%%%%%%%%%%%%%%%%%%%%%%%%%%%%%%%%%%%%%%%%%%%
%%%%%%%%%%%%%%%%%%%%%%%%%%%%%%%%%%%%%%%%%%%%%%%%%%%%%%%
%%%%%%%%%%%%%%%%%%%%%%%%%%%%%%%%%%%%%%%%%%%%%%%%%%%%%%%

\section{Extension of cohomology to the group affine space}
Note that, because $G$ is a vector space basis for its group algebra $\R G$, every map $G^n\overset\phi\longrightarrow X$ has a unique extension to an $n$-multilinear map
$$
\underbrace{\R G\times \ldots\times \R G}_{n \textrm{ times}}\overset{\tilde\phi}\longrightarrow X.
$$
Thus, the vector space $C^n(G,X)$ is canonically isomorphic to the vector space $C^n(\R G,X)$ of $n$-multilinear maps $(\R G)^n\overset\psi\longrightarrow X$. However, the extension operations $\phi\mapsto\tilde \phi$ fail to commute with the coboundary homomorphisms $\partial$ (see Example \ref{failure} below) and we must therefore instead restrict the attention to a smaller space of functions on which they do commute.\footnote{It is possible to extend cohomology to all of the group algebra $\R G$. However, this extension involves the augmentation map and will not be pursued here.}  Indeed, for a map $G^n\overset\phi\longrightarrow X$, let $\hat \phi$ denote the unique extension of $\phi$ to an {\em $n$-multiaffine map}
$$
\underbrace{\A G\times \ldots\times \A G}_{n \textrm{ times}}\overset{\hat\phi}\longrightarrow X,
$$
that is, so that
$$
\hat \phi\big(\xi_1,\ldots, \xi_{i-1}, \sum_{j=1}^kt_j\zeta_j,\xi_{i+1},\ldots, \xi_n\big)= \sum_{j=1}^kt_j\cdot\hat \phi(\xi_1,\ldots, \xi_{i-1}, \zeta_j,\xi_{i+1},\ldots, \xi_n)
$$
for all $\xi_l,\zeta_j\in \A G$ and $t_l\in \R$ with $\sum_{j=1}^kt_j=1$. Let then $C^n(\A G,X)$ denote the collection of all $n$-multiaffine maps $\A G\times \ldots\times \A G\overset{\psi}\longrightarrow X$ and note that $C^n(G,X)$ and $C^n(\A G, X)$ are canonically isomorphic via the extension map $\phi\mapsto \hat\phi$.

Finally, define the {\em coboundary maps}
$$
C^n(\A G,X)\;\overset{\partial^{n+1}}\longrightarrow C^{n+1}(\A G,X)
$$
by the formula
\maths{
(\partial^{n+1}\phi)(\xi_1,\ldots, \xi_{n+1})
=&-\pi(\xi_1)\phi(\xi_2,\ldots,\xi_{n+1})
+(-1)^{n}\phi(\xi_1,\ldots, \xi_n)\\
&-\sum_{i=1}^n(-1)^i\phi(\xi_1,\ldots, \xi_{i-1},\xi_i\xi_{i+1}, \xi_{i+2},\ldots, \xi_{n+1}).
}

The following lemma now shows that, as opposed to the case of $C^n(\R G,X)$, the identification of $C^n(G,X)$ with $C^n(\A G,X)$ is compatible with the coboundary homomorphism $\partial^{n+1}$.
\begin{lemme}
For all $n\geqslant 0$ and $\phi\in C^n(G,X)$, we have $\partial(\hat\phi)=\widehat{\partial\phi}$ and so the diagram
$$
\begin{tikzcd}
C^n(G,X)\arrow{d}{\iso}\arrow{r}{\partial^{n+1}}      & C^{n+1}(G,X)\arrow{d} {\iso}  \\
C^n(\A G,X)\arrow{r}{\partial^{n+1}}  & C^{n+1}(\A G,X) \\
\end{tikzcd}
$$
commutes.
\end{lemme}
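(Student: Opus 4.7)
The plan is to exploit the uniqueness of the multiaffine extension. Concretely, every element $\xi \in \A G$ can be written as the finite affine combination $\xi = \sum_{g} \xi(g)\cdot 1_g$ (the augmentation condition $\sum_g \xi(g) = 1$ is exactly what makes this affine). Iterating this observation shows that any two $(n{+}1)$-multiaffine maps $\A G^{n+1} \to X$ that agree on $G^{n+1}$ must agree on all of $\A G^{n+1}$. So the equality $\partial(\hat\phi) = \widehat{\partial \phi}$ will follow once we verify the two ingredients: (i) $\partial(\hat\phi)$ is $(n{+}1)$-multiaffine; (ii) $\partial(\hat\phi)$ and $\widehat{\partial\phi}$ agree when evaluated on tuples from $G^{n+1}$.

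Ingredient (ii) is essentially by design: the defining formula for $\partial^{n+1}$ on $C^n(\A G,X)$ specialises to the original formula for $\partial^{n+1}$ on $C^n(G,X)$ when all $\xi_i = 1_{g_i}$, while $\hat\phi(1_{g_1},\ldots,1_{g_n}) = \phi(g_1,\ldots,g_n)$ by construction. Both sides therefore agree with $\partial\phi$ on $G^{n+1}$, and this is all that the uniqueness step requires.

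The substantive content is ingredient (i), and this is where I expect the main obstacle to lie: one must check that $\partial(\hat\phi)$ is separately affine in each of its $n{+}1$ arguments. For an argument $\xi_j$ appearing purely as an argument of $\hat\phi$ (with no multiplication involved), affinity is immediate from the multiaffinity of $\hat\phi$. For the leading term $-\pi(\xi_1)\hat\phi(\xi_2,\ldots,\xi_{n+1})$, affinity in $\xi_1$ follows because $\pi\colon \R G \to \ku L(X)$ is $\R$-linear, hence its restriction to the affine subspace $\A G$ is affine; affinity in the other coordinates $\xi_j$ ($j\geqslant 2$) follows from the multiaffinity of $\hat\phi$ composed with the continuous linear operator $\pi(\xi_1)$. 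The genuinely new point concerns the "merging" terms $\hat\phi(\xi_1,\ldots,\xi_i\xi_{i+1},\ldots,\xi_{n+1})$: here one must observe that $\A G$ is closed under the multiplication inherited from $\R G$, and that multiplication in $\R G$ is $\R$-bilinear, so the map $(\xi_i,\xi_{i+1}) \mapsto \xi_i\xi_{i+1}$ restricts to a biaffine map $\A G \times \A G \to \A G$. Fixing all other arguments, $\xi_i \mapsto \xi_i\xi_{i+1}$ and $\xi_{i+1} \mapsto \xi_i\xi_{i+1}$ are each affine maps into $\A G$, and composing with the (multi)affine $\hat\phi$ preserves affinity in $\xi_i$ and $\xi_{i+1}$ separately.

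With (i) and (ii) in hand, the uniqueness of the multiaffine extension of $\partial\phi$ from $G^{n+1}$ to $\A G^{n+1}$ forces $\partial(\hat\phi) = \widehat{\partial\phi}$, which is exactly the commutativity of the diagram. As a closing remark, this is precisely the step that would fail if we had worked with $C^n(\R G,X)$ instead, because the merging map $(\xi_i,\xi_{i+1}) \mapsto \xi_i\xi_{i+1}$ is bilinear but not linear in each argument separately in a way compatible with $\R G$-multilinearity of the extension — it is the affine structure (and the fact that $\A G$ is a multiplicative subsemigroup of $\R G$) that makes the diagram commute.
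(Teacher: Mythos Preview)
Your proof is correct and takes a genuinely different route from the paper. The paper proceeds by brute-force expansion: it writes each $\xi_i = \sum_j t_{i,j} g_j$ as an affine combination of group elements, expands $\widehat{\partial\phi}(\xi_1,\ldots,\xi_{n+1})$ using multiaffinity, applies the definition of $\partial\phi$ on each resulting tuple from $G^{n+1}$, and then recollects the terms to recover $\partial(\hat\phi)(\xi_1,\ldots,\xi_{n+1})$. Your argument instead invokes the uniqueness of the multiaffine extension (already asserted in the paper just before the lemma) and reduces the problem to checking that $\partial(\hat\phi)$ is itself multiaffine --- a clean structural verification that isolates exactly why the affine setting succeeds where the full $\R G$-setting fails. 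The paper's computation is self-contained and makes the combinatorics explicit, but your approach is shorter and more conceptual; in particular, your closing observation that multiplication is biaffine on $\A G$ but incompatible with $\R G$-multilinearity of the extension pinpoints the mechanism behind the subsequent counterexample more transparently than the direct calculation does.
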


\begin{proof}
Suppose that $\xi_1,\ldots,\xi_{n+1}\in \A G$. Then we can find $g_1,\ldots, g_k\in G$ and $t_{i,j}\in \R$ with  $\sum_{j=1}^kt_{i,j}=1$, so that  $\xi_i=\sum_{j=1}^kt_{i,j}g_j$ for all $i$. We then have
\maths{
\widehat{\partial\phi}(&\xi_1,\ldots,\xi_{n+1})
=\sum_{j_1,\ldots,j_{n+1}}t_{1,j_1}\cdots t_{n+1,j_{n+1}}\partial\phi(g_{j_1},\ldots,g_{j_{n+1}})\\
=&-\sum_{j_1,\ldots,j_{n+1}} t_{1,j_1}\cdots t_{n+1,j_{n+1}}\pi(g_{j_1})\phi(g_{j_2},\ldots,g_{j_{n+1}})\\
&-\sum_{j_1,\ldots,j_{n+1}} t_{1,j_1}\cdots t_{n+1,j_{n+1}}\sum_{p=1}^n(-1)^p\phi(g_{j_1},\ldots,g_{j_{p-1}},g_{j_p}g_{j_{p+1}},g_{j_{p+2}},\ldots,g_{j_{n+1}})\\
&+(-1)^{n}\sum_{j_1,\ldots,j_{n+1}} t_{1,j_1}\cdots t_{n+1,j_{n+1}}\phi(g_{j_1},\ldots,g_{j_{n}})\\
=&-\sum_{j_1} t_{1,j_1}\pi(g_{j_1})\hat\phi(\xi_2,\ldots,\xi_{n+1})\\
&-\sum_{p=1}^n(-1)^p\sum_{j_p} t_{p,j_p}\hat\phi(\xi_1,\ldots,\xi_{p-1},g_{j_p}\xi_{p+1},\xi_{p+2},\ldots,\xi_{n+1})\\
&+(-1)^{n}\sum_{j_{n+1}}t_{n+1,j_{n+1}}\hat\phi(\xi_1,\ldots,\xi_n)\\
=&-\pi(\xi_1)\hat\phi(\xi_2,\ldots,\xi_{n+1})\\
&-\sum_{p=1}^n(-1)^p\hat\phi(\xi_1,\ldots,\xi_{p-1},\xi_p\xi_{p+1},\xi_{p+2},\ldots,\xi_{n+1})\\
&+(-1)^{n}\hat\phi(\xi_1,\ldots,\xi_n)\\
=&\partial(\hat\phi)(\xi_1,\ldots,\xi_{n+1}).
}
This proves the lemma.
\end{proof}

\begin{rem}\label{no worries}
By this lemma, we need not worry about the difference between the two spaces $C^n(G,X)$ and $C^n(\A G, X)$. For example, if we define $Z^n(\A G,X)={\sf ker}\,\partial^{n+1}$ and $B^n(\A G,X)={\sf rg}\,\partial^n$, then the extension map $\phi\mapsto \hat\phi$ defines an isomorphism between $Z^1(G,X)$ and $Z^1(\A G,X)$, respectively between $B^1(G,X)$ and $B^1(\A G,X)$. Concretely, this implies among other things that, if $\phi\in Z^1(G,X)$ and $\xi,\zeta\in \A G$, then 
$$
\hat\phi(\xi\zeta)=\pi(\xi)\hat\phi(\zeta)+\hat\phi(\xi),
$$
because $\partial^2\hat\phi=\widehat{\partial^2\phi}=0$. 

Henceforth, we shall not worry about the distinction between $\phi$ and its extension $\hat \phi$ to  an $n$-multiaffine map and simply denote both by $\phi$. 
\end{rem}

\begin{rem}\label{commutativity}
Suppose $\xi, \zeta\in \Delta G$ commute, that is $\xi\zeta=\zeta\xi$, and that $\phi\in Z^1(G,X)$.
Then
$$
\pi(\xi)\phi(\zeta)+\phi(\xi)=
\phi(\xi\zeta)=\phi(\zeta\xi)=\pi(\zeta)\phi(\xi)+\phi(\zeta)
$$
and so
$$
\big(I-\pi(\zeta)\big)\phi(\xi)=\big(I-\pi(\xi)\big)\phi(\zeta)
$$
or equivalently
$$
\partial^1\big(\phi(\xi)\big)(\zeta)=\partial^1\big(\phi(\zeta)\big)(\xi).
$$
We shall be using this simple observation repeatedly.
\end{rem}

\begin{exa}\label{failure}
To see that the standard formulas fail to  extend the cochain complex to the group algebra $\R G$ rather than just $\A G$, suppose that $C^n(\R G,X)\;\overset{\partial^{n+1}}\longrightarrow C^{n+1}(\R G,X)$ is given as before by the formula
by the formula
\maths{
(\partial^{n+1}\phi)(\xi_1,\ldots, \xi_{n+1})
=&-\pi(\xi_1)\phi(\xi_2,\ldots,\xi_{n+1})
+(-1)^{n}\phi(\xi_1,\ldots, \xi_n)\\
&-\sum_{i=1}^n(-1)^i\phi(\xi_1,\ldots, \xi_{i-1},\xi_i\xi_{i+1}, \xi_{i+2},\ldots, \xi_{n+1}).
}
Then, for $x\in X=C^0(G,X)$ and $g,f\in G$, we have
$$
(\partial \tilde x)(g+f)=-\pi(g+f)\tilde x+\tilde x=x-\pi(g)x-\pi(f)x,
$$
whereas
$$
\widetilde{(\partial x)}(g+f)=(\partial x)(g)+(\partial x)(f)=x-\pi(g)x+x-\pi(f)x.
$$
So $\widetilde{(\partial x)}\neq \partial \tilde x$ whenever $x\neq 0$.
\end{exa}

Suppose $G$ is a group, $F$ is a subgroup and $(X,\pi)$ is a Banach $G$-module. Then the map that takes a cochain $\phi\in C^n(G,X)$ to its restriction $F^n\overset{\phi|_F}\longrightarrow X$ is easily seen to commute with the coboundary map $\partial$ and therefore maps $Z^n(G,X)$ into $Z^n(F,X)$ and $B^n(G,X)$ into $B^n(F,X)$. From this it also follows that the restriction map $\phi\mapsto \phi|_F$ induces a map on cohomology $H^1(G,X)\rightarrow H^1(F,X)$.

Our first result is a refinement of Theorem C \cite{BFGM}, which was proved using very different means, namely minimal invariant closed convex sets. The proof here, based on Example \ref{exa: compl}, is purely algebraic and somewhat shorter.
\begin{thm}\label{zero restriction}
Suppose $G$ is a group and $F$ is a  subgroup so that $G=F\cdot {\sf FC}_G(F)$. Assume also that $(X,\pi)$ is a uniformly convex isometric Banach $G$-module without almost invariant unit vectors and so that $X^{{\sf FC}_G(F)}=\{0\}$. Then the restriction map 
$$
H^1(G,X)\rightarrow H^1(F,X)
$$
is zero.
\end{thm}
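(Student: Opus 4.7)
My plan is to apply Example \ref{exa: compl} with $\eta = \xi\zeta$ taken from Lemma \ref{existence of xi zeta} to split a given cocycle $\phi \in Z^1(G, X)$ as $\phi = \partial^1 x + \phi'$ with $\phi'(\xi\zeta) = 0$, and then to force $\phi'|_F$ to vanish by combining the algebraic commutation identities of Remark \ref{commutativity} with the hypothesis $X^{{\sf FC}_G(F)} = \{0\}$ via Lemma \ref{X^F'}.

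Explicitly, Lemma \ref{existence of xi zeta} produces $\xi \in \Delta F$ and $\zeta \in F' \cap \Delta G$ with $\norm{\pi(\xi\zeta)} < 1$, so $I - \pi(\xi\zeta)$ is invertible in $\ku L(X)$. Setting $x := (I - \pi(\xi\zeta))^{-1}\phi(\xi\zeta)$ and $\phi' := \phi - \partial^1 x$, I get $\phi' \in Z^1(G,X)$ with $\phi'(\xi\zeta) = 0$. Since $\zeta \in F'$ we have $\xi\zeta = \zeta\xi$, so from $\phi'(\xi\zeta) = \pi(\xi)\phi'(\zeta) + \phi'(\xi) = 0 = \pi(\zeta)\phi'(\xi) + \phi'(\zeta) = \phi'(\zeta\xi)$ a short algebraic chase yields $(I - \pi(\xi\zeta))\phi'(\zeta) = 0$, whence $\phi'(\zeta) = 0$ and likewise $\phi'(\xi) = 0$. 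For each $f \in F$, the commutation $f\zeta = \zeta f$ and Remark \ref{commutativity} now give $(I - \pi(\zeta))\phi'(f) = (I - \pi(f))\phi'(\zeta) = 0$, so $\phi'(f) \in V_\zeta := {\sf ker}(I - \pi(\zeta))$. Since $(\partial^1 x)|_F \in B^1(F, X)$ automatically, it remains only to show $\phi'|_F = 0$, for then $\phi|_F \in B^1(F, X)$ and the restriction map is zero.

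To show $\phi'|_F = 0$: by Lemma \ref{X^F'} and $X^{{\sf FC}_G(F)} = \{0\}$, it suffices to prove $\pi(\zeta'')\phi'(f) = \phi'(f)$ for every $\zeta'' \in F' \cap \Delta G$. Applying Remark \ref{commutativity} to the commuting pair $(f, \zeta'')$ reduces this to $(I - \pi(f))\phi'(\zeta'') = 0$, i.e., to $\phi'(\zeta'')$ being $F$-invariant. The very argument used above (with $\zeta''$ in place of $\zeta$) gives $\phi'(\zeta'') = 0$ whenever $\zeta''$ commutes with $\xi\zeta$ in $\R G$ --- and since $\zeta''$ automatically commutes with $\xi$ (as $\zeta'' \in F'$), this is the same as $\zeta''$ commuting with $\zeta$. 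The main obstacle is therefore precisely the non-commutativity of $F' \cap \Delta G$ when $\zeta''$ fails to centralize $\zeta$. I expect to overcome this either by iterating the Example \ref{exa: compl} construction with a family of pairs $(\xi_i, \zeta_i)$ whose centralizers collectively cover $F' \cap \Delta G$, or by exploiting the semigroup structure of $F' \cap \Delta G$ (Lemma \ref{commutant}) together with a limiting argument in the Tychonoff topology on $C^1(G, X)$ to approximate arbitrary $\zeta''$ by elements centralizing $\zeta$.
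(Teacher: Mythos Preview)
Your reduction is correct and matches the paper exactly through the point where you establish $\phi'(\xi)=\phi'(\zeta)=0$ and $\pi(\zeta)\phi'(f)=\phi'(f)$ for all $f\in F$. The gap is precisely where you locate it: showing $\pi(\beta)\phi'(f)=\phi'(f)$ for an \emph{arbitrary} $\beta\in F'\cap\Delta G$, equivalently $(I-\pi(f))\phi'(\beta)=0$. Your two proposed fixes do not close this gap. Iterating Example~\ref{exa: compl} with different pairs $(\xi_i,\zeta_i)$ changes the decomposition and hence the cocycle $\phi'_i$, so the partial information obtained for each $i$ is about a different object and cannot be combined. A Tychonoff limiting argument is too vague: there is no evident way to approximate a general $\beta\in F'\cap\Delta G$ by elements centralising the fixed $\zeta$.

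The paper's missing idea is geometric rather than algebraic and does not require $\beta$ to commute with $\zeta$ at all. From $\pi(\zeta)\phi'(\sigma)=\phi'(\sigma)$ for every $\sigma\in\Delta F$ one gets
\[
\phi'(\xi\sigma)=\pi(\xi)\phi'(\sigma)+\phi'(\xi)=\pi(\xi)\pi(\zeta)\phi'(\sigma)=\pi(\xi\zeta)\phi'(\sigma),
\]
and iterating yields $\phi'(\xi^n\eta)=\pi(\xi\zeta)^n\phi'(\eta)\to 0$ for every $\eta\in\Delta F$. Now fix $f\in F$, $\beta\in F'\cap\Delta G$, set $\eta=\tfrac{1+f}{2}$, and use only that $\beta$ commutes with $\xi^n\eta\in\Delta F$ (automatic since $\beta\in F'$) to run the chain
\[
\norm{\phi'(\beta)}\geqslant\norm{\pi(\eta)\phi'(\beta)}\geqslant\norm{\pi(\xi^n\eta)\phi'(\beta)}
=\Norm{\phi'(\beta)-\big(I-\pi(\beta)\big)\phi'(\xi^n\eta)}\Lim{n}\norm{\phi'(\beta)}.
\]
Thus $\Norm{\tfrac{\phi'(\beta)+\pi(f)\phi'(\beta)}{2}}=\norm{\phi'(\beta)}$, and strict convexity forces $\pi(f)\phi'(\beta)=\phi'(\beta)$, which is exactly what you needed.
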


\begin{proof}
Because $X$ is uniformly convex and has no almost invariant unit vectors, it follows from Lemma \ref{existence of xi zeta} that there are $\xi \in \Delta F$ and $\zeta\in F'\cap \Delta G$ so that $\norm{\pi(\xi)\pi(\zeta)}<1$. It now follows from Example \ref{exa: compl} that $C^1(G,X)$  decomposes as a direct sum of closed linear subspaces 
$$
C^1(G,X)=B^1(G,X)\oplus \{\phi\in C^1(G,X)\del \phi(\xi\zeta)=0\}.
$$
To prove the theorem, it thus suffices to show that $\phi|_F=0$ for all  $\phi\in Z^1(G,X)$ satisfying $\phi(\xi\zeta)=0$. 

So let such a $\phi$ be given. Then
$$
\pi(\xi)\phi(\zeta)+\phi(\xi)=\phi(\xi\zeta)=0=\phi(\zeta \xi)=\pi(\zeta)\phi(\xi)+\phi(\zeta)
$$
and so 
$$
\phi(\zeta)=-\pi(\zeta)\phi(\xi)=-\pi(\zeta)\big[-\pi(\xi)\phi(\zeta)\big]=\pi(\xi\zeta)\phi(\zeta).
$$ 
As $\norm{\pi(\xi\zeta)}<1$, it follows that $\phi(\zeta)=0$ and similarly $\phi(\xi)=0$. 

Observe now that, for any $\sigma\in \Delta F$, we have
$$
\big(I-\pi(\zeta)\big)\phi(\sigma)=\big(I-\pi(\sigma)\big)\phi(\zeta)=0.
$$
Thus $\pi(\zeta)\phi(\sigma)=\phi(\sigma)$ and
$$
\phi(\xi\sigma)=\pi(\xi)\phi(\sigma)+\phi(\xi)=\pi(\xi)\phi(\sigma)=\pi(\xi\zeta)\phi(\sigma).
$$
Therefore, if $\eta\in \Delta F$, we have 
\maths{
\phi(\xi^n\eta)=\phi(\xi\cdot\xi^{n-1}\eta)=\pi(\xi\zeta)\phi(\xi^{n-1}\eta)=\ldots=\pi(\xi\zeta)^n\phi(\eta)
}
and so $\phi(\xi^n\eta)\Lim n 0$ because $\norm{\pi(\xi\zeta)}<1$.

Assume that $f\in F$ and $\beta\in F'\cap \Delta G$. Set $\eta=\frac{1+f}2\in \Delta F$ and note that
\maths{
\norm{\phi(\beta)}
&\geqslant \norm{\pi(\eta)\phi(\beta)}\\
&\geqslant \norm{\pi(\xi^n\eta)\phi(\beta)}\\
&=\norm{\phi(\xi^n\eta\beta)-\phi(\xi^n\eta)}\\
&=\norm{\phi(\beta\xi^n\eta)-\phi(\xi^n\eta)}\\
&=\Norm{\phi(\beta)-\big(I-\pi(\beta)\big)\phi(\xi^n\eta)}\\
&\Lim n\norm{\phi(\beta)}.
}
It thus follows that
$$
\NORM{\frac {\phi(\beta)+\pi(f)\phi(\beta)}2}=\norm{\pi(\eta)\phi(\beta)}=\norm{\phi(\beta)}
$$
and so $\pi(f)\phi(\beta)=\phi(\beta)$ because $X$ is strictly convex. Finally,
$$
\big(I-\pi(\beta)\big)\phi(f)=\big(I-\pi(f)\big)\phi(\beta)=0
$$
and so $\pi(\beta)\phi(f)=\phi(f)$.
Because $\beta\in F'\cap \Delta G$ was arbitrary,  Lemma \ref{X^F'} implies that $\phi(f)\in X^{{\sf FC}_G(F)}=\{0\}$.
In other words, $\phi|_F=0$ and so the restriction mapping $H^1(G,X)\rightarrow H^1(F,X)$ is zero.
\end{proof}

The following corollary is a reformulation of Theorem C \cite{BFGM}.
\begin{cor}
Suppose $F$ and $G$ are groups and $(X,\pi)$ is a uniformly convex isometric Banach $F\times G$-module without almost invariant unit vectors. Assume also that $X^F=X^G=\{0\}$. Then 
$$
H^1(F\times G,X)=\{0\}.
$$
\end{cor}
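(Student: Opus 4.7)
The plan is to deduce the corollary from Theorem \ref{zero restriction} applied to the direct product $H = F \times G$ with the subgroup $F$. First I would observe that, upon identifying $F$ and $G$ with the commuting subgroups $F \times \{1\}$ and $\{1\} \times G$ of $H$, we have $G \leqslant {\sf C}_H(F) \leqslant {\sf FC}_H(F)$. Consequently $H = F \cdot G$ is contained in $F \cdot {\sf FC}_H(F)$, and the inclusion $G \leqslant {\sf FC}_H(F)$ yields $X^{{\sf FC}_H(F)} \subseteq X^G = \{0\}$. Together with the standing hypothesis that $X$ is uniformly convex, isometric, and has no almost invariant unit vectors as an $H$-module, this verifies the hypotheses of Theorem \ref{zero restriction}, so the restriction map $H^1(H, X) \to H^1(F, X)$ vanishes.

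Now let $\phi \in Z^1(H, X)$ be any cocycle. By the vanishing of the restriction map, there exists $v \in X$ with $\phi|_F = \partial^1 v$, and replacing $\phi$ by the cohomologous cocycle $\phi - \partial^1 v$, I may assume $\phi(f) = 0$ for every $f \in F$. Using that $F$ and $G$ commute in $H$, the cocycle identity gives, for any $f \in F$ and $g \in G$,
\[
\pi(f)\phi(g) = \pi(f)\phi(g) + \phi(f) = \phi(fg) = \phi(gf) = \pi(g)\phi(f) + \phi(g) = \phi(g),
\]
so $\phi(g) \in X^F = \{0\}$. Hence $\phi$ vanishes on both $F$ and $G$. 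Since every element of $H$ factors as $fg$ with $f \in F$ and $g \in G$, a final use of the cocycle identity yields $\phi(fg) = \pi(f)\phi(g) + \phi(f) = 0$, so $\phi \equiv 0$ and the original cocycle is a coboundary.

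I do not expect a genuine obstacle: the argument is a direct corollary of Theorem \ref{zero restriction}, combined with the elementary observation that a cocycle on a direct product $F \times G$ which already vanishes on $F$ must take values in $X^F$ when restricted to $G$, and is therefore identically zero under the hypothesis $X^F = \{0\}$.
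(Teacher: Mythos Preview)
Your argument is correct and follows essentially the same strategy as the paper: invoke Theorem \ref{zero restriction} for the direct product and then use the commutativity of $F$ and $G$ to finish. The execution differs slightly. The paper applies Theorem \ref{zero restriction} symmetrically to both factors, obtaining $x,y\in X$ with $\phi|_F=\partial^1 x$ and $\phi|_G=\partial^1 y$, and then computes $\big(I-\pi(g)\big)\big(I-\pi(f)\big)(y-x)=0$ to force $x=y$. You instead apply the theorem once, subtract off the resulting coboundary so that $\phi|_F=0$, and then observe directly that $\phi(g)\in X^F=\{0\}$ for $g\in G$. Your route is marginally more economical---it avoids introducing the second vector $y$ and the contradiction argument---while the paper's version makes the symmetry between $F$ and $G$ more visible. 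Both rely on exactly the same ingredients.
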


\begin{proof}
For simplicity of notation, we shall identify $F$ and $G$ with their images in $F\times G$. So suppose $\phi\in Z^1(F\times G,X)$. Then, by Theorem \ref{zero restriction}, $\phi|_F\in B^1(F,X)$ whereas $\phi|_G\in B^1(G,X)$. So, for some $x,y\in X$, we have $\phi(f)=x-\pi(f)x$ and $\phi(g)=y-\pi(g)y$ for all $f\in F$ and $g\in G$.

We claim that $x=y$. For otherwise, as $X^F=\{0\}$, there is some $f\in F$ so that $\big(I-\pi(f)\big)(y-x)\neq 0$ and therefore, as $X^G=\{0\}$, some $g\in G$ so that 
$$
\big(I-\pi(g)\big)\big(I-\pi(f)\big)(y-x)\neq 0.
$$
However, this contradicts that
\maths{
0
&= \phi(gf)-\phi(fg)\\
&=\pi(g) \phi(f)+\phi(g)-\pi(f)\phi(g)-\phi(f)\\
&=\pi(g)x-\pi(g)\pi(f)x+y-\pi(g)y-\pi(f)y+\pi(f)\pi(g)y-x+\pi(f)x\\
&=\big(I-\pi(g)\big)\big(I-\pi(f)\big)(y-x).
}

Thus, $x=y$ and so, for all $f\in F$ and $g\in G$, we have that $\phi(fg)=\pi(f)x-\pi(f)\pi(g)x+x-\pi(f)x=x-\pi(fg)x$, which shows that $\phi\in B^1(F\times G,X)$ and therefore that $H^1(F\times G,X)=\{0\}$. 
\end{proof}

%%%%%%%%%%%%%%%%%%%%%%%%%%%%%%%%%%%%
%%%%%%%%%%%%%%%%%%%%%%%%%%%%%%%%%%%%
%%%%%%%%%%%%%%%%%%%%%%%%%%%%%%%%%%%%
%%%%%%%%%%%%%%%%%%%%%%%%%%%%%%%%%%%%

\section{Homotopy of cochain maps}
Suppose $G$ and $F$ are groups and $X$ and $Y$ are $G$ and $F$-Banach modules respectively. A {\em cochain map (of degree $0$)} between the standard cochain complexes $C^\bullet(G,X)$ and $C^\bullet(F,Y)$ is a sequence $T=(T^n)_{n=0}^\infty$ of continuous linear operators 
$$
C^n(G,X)\overset{T^n}\longrightarrow C^n(F,Y)
$$
so that the following diagram commutes 
$$
\begin{tikzcd}
C^0(G,X)\arrow{d}{T^0}\arrow{r}{\partial^{1}}      & C^1(G,X)\arrow{d}{T^1}\arrow{r}{\partial^{2}}      & C^2(G,X)\arrow{d}{T^2}\arrow{r}{\partial^{3}}      &C^3(G,X)\arrow{d}{T^3}\arrow{r}{\partial^{4}}      & {\ldots} \\
C^0(F,Y)\arrow{r}{\partial^{1}}      & C^1(F,Y)\arrow{r}{\partial^{2}}      & C^2(F,Y)\arrow{r}{\partial^{3}}      &  C^3(F,Y)\arrow{r}{\partial^{4}}  &{\ldots} \\
\end{tikzcd}
$$
As usual, we will most often suppress the index $n$ and simply write $T$ for the individual operators $T^n$. Because a cochain map $T$ commutes with the boundary operator $\partial$ or rather as $T^{n+1}\partial^{n+1}=\partial^{n+1}T^n$, we observe that $T$ maps $Z^\bullet(G,X)$ into $Z^\bullet(F,Y)$ and $B^\bullet(G,X)$ into $B^\bullet(F,Y)$. It therefore follows that a cochain map $T$ defines a map on cohomology, i.e.,
$$
H^n(G,X)\overset{T}\longrightarrow H^n(F,Y).
$$

If $T$ and $S$ are two cochain maps from $C^\bullet(G,X)$ to $C^\bullet(F,Y)$, we say that $T$ and $S$ are {\em homotopic} provided that there is a sequence of continuous linear operators 
$$
C^{n+1}(G,X)\overset{R^{n+1}}\longrightarrow C^n(F,Y)
$$
so that, when letting $C^{0}(G,X)\overset{R^0}\longrightarrow \{0\}\overset{\partial^0}\longrightarrow C^0(F,Y)$ be the $0$ maps, we have that
$$
T^n-S^n=\partial^nR^{n}+R^{n+1}\partial^{n+1}
$$
for all $n\geqslant 0$. This gives us a non-commutative diagram as follows.
\vspace{.4cm}
$$
\begin{tikzcd}[row sep=large, column sep=large]   
&C^0(G,X)\arrow[d, shift left, "{T^0}" ', "{S^0}"]      \arrow{r}{\partial^{1}}   \arrow[dl, dashrightarrow, "{R^{0}}" description]
& C^1(G,X)\arrow[d, shift left, "{T^1}" ', "{S^1}"]   \arrow{r}{\partial^{2}}       \arrow[dl, dashrightarrow, "{R^{1}}" description]
& C^2(G,X)\arrow[d, shift left, "{T^2}" ', "{S^2}"]   \arrow{r}{\partial^{3}}       \arrow[dl, dashrightarrow, "{R^{2}}" description]
% &C^3(G,X)\arrow[d, shift left, "{T^2}" ', "{R^3}"]    \arrow{r}{\partial^{4}}       \arrow[dl, dashrightarrow, "{S^{2}}" description]
& {\ldots}  \arrow[dl, dashrightarrow, "{R^{2}}" description]  \\
\{0\}\arrow{r}{\partial^{0}}    
&C^0(F,Y)\arrow{r}{\partial^{1}}      
& C^1(F,Y)\arrow{r}{\partial^{2}}      
& C^2(F,Y)\arrow{r}{\partial^{3}}      
% &  C^3(F,Y)\arrow{r}{\partial^{4}}  
&{\ldots} \\
\end{tikzcd}
$$

The main observation regarding homotopic cochain maps $T$ and $S$ is that they define identical maps on cohomology, 
$$
H^n(G,X)\overset{T, S}\longrightarrow H^n(F,Y).
$$
Indeed, if $T^n-S^n=\partial^nR^{n}+R^{n+1}\partial^{n+1}$ and $\phi\in Z^n(G,X)={\sf ker}\, \partial^{n+1}$, then
$$ 
T^n\phi-S^n\phi=\partial^nR^{n}\phi+R^{n+1}\partial^{n+1}\phi=\partial^nR^{n}\phi\in B^n(F,Y).
$$

Our first lemma is related to the well-known fact that conjugation in groups is cohomologically trivial (see, e.g., Proposition 8.3 \cite{brown}).
\begin{prop}\label{homotopic cochain maps}
Suppose $G$ is a group, $F$ is a subgroup and $(X,\pi)$ is a Banach $G$-module. Assume that $\xi \in F' \cap \Delta G$ and define cochain maps
$$
C^\bullet(G,X)\overset{S, T}\longrightarrow C^\bullet(F,X)
$$
by $S \phi=\phi|_F$ and $T\phi=\pi(\xi)\circ \phi|_F$. Then $S$ and $T$ are homotopic.
\end{prop}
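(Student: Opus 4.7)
The plan is to exhibit an explicit chain homotopy between $S$ and $T$ via an ``insertion'' formula. For $\psi \in C^{n+1}(G,X)$, canonically identified with its multi-affine extension to $\A G^{n+1}$, I define
$$
R^{n+1}\psi(f_1,\ldots,f_n) \;:=\; \sum_{k=0}^{n}(-1)^{k+1}\psi(f_1,\ldots,f_k,\xi,f_{k+1},\ldots,f_n).
$$
In particular $R^1\psi = -\psi(\xi)$ and $R^2\psi(f) = \psi(f,\xi) - \psi(\xi,f)$. As each $R^{n+1}\psi(f_1,\ldots,f_n)$ is a finite linear combination of evaluations of $\psi$ at fixed points of $\A G^{n+1}$, each $R^{n+1}$ is automatically linear and continuous as a map $C^{n+1}(G,X) \to C^n(F,X)$ in the topology of pointwise convergence.

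The heart of the proof is then to verify the homotopy identity
$$
T^n\phi - S^n\phi \;=\; \partial^n R^n\phi + R^{n+1}\partial^{n+1}\phi
$$
for all $n\geqslant 0$ and $\phi \in C^n(G,X)$. I would handle $n = 0$ first, where $R^0 = 0$ and one checks directly that $R^1\partial^1\phi = -(\phi - \pi(\xi)\phi) = \pi(\xi)\phi - \phi = T^0\phi - S^0\phi$. For $n \geqslant 1$, I would expand both $\partial^n R^n\phi(f_1,\ldots,f_n)$ and $R^{n+1}\partial^{n+1}\phi(f_1,\ldots,f_n)$ into iterated sums indexed by an insertion slot $k$ (coming from $R$) and a coboundary index $i$ (coming from $\partial$), invoking the preceding lemma that the coboundary is compatible with the multi-affine extension. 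Most of the resulting terms — those in which $\xi$ is kept in its own slot and never multiplied with any $f_i$ — cancel pairwise by the usual simplicial bookkeeping underlying $\partial^2 = 0$. Two terms survive from the ``extremal'' insertions: the $k = 0$ contribution produces $\pi(\xi)\phi(f_1,\ldots,f_n)$ (via the $-\pi(g_1)\phi(g_2,\ldots,g_{n+1})$ piece of $\partial^{n+1}$ applied at $g_1 = \xi$), and the $k = n$ contribution produces $-\phi(f_1,\ldots,f_n)$ (via the $(-1)^n\phi(g_1,\ldots,g_n)$ piece of $\partial^{n+1}$).

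The essential algebraic input — and the only place where the hypothesis $\xi \in F'$ is used — is that $\xi f_i = f_i\xi$ in $\R G$, whereby $\phi(\ldots, \xi f_i,\ldots) = \phi(\ldots, f_i\xi,\ldots)$ as evaluations at a common point of $\A G$. This identity forces the remaining ``impure'' terms — those in which the coboundary has merged $\xi$ with an adjacent $f_i$, obtained from two consecutive insertion slots $k$ and $k+1$ with opposite signs — to match in pairs and cancel. I expect the main obstacle to be the sign and index bookkeeping for this cancellation; conceptually, this is entirely parallel to the classical simplicial homotopy showing that inner conjugation acts trivially on the cohomology of the bar resolution (cf.\ Proposition~8.3 of \cite{brown}).
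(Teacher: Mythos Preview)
Your proposal is correct and follows essentially the same route as the paper: the paper defines the homotopy $(R^{n+1}\phi)(f_1,\ldots,f_n)=\sum_{i=1}^{n+1}(-1)^{i+1}\phi(f_1,\ldots,f_{i-1},\xi,f_i,\ldots,f_n)$, which is the negative of your $R^{n+1}$ (consistent with the paper verifying $S-T=\partial R+R\partial$ rather than your $T-S$), and then carries out exactly the simplicial bookkeeping you describe, using $\xi f_i=f_i\xi$ to cancel the ``impure'' merged terms. One minor point: the hypothesis $\xi\in F'$ is also what makes $T$ a cochain map in the first place, not only what makes the impure terms cancel.
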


\begin{proof}
That the restriction map $S$ is a cochain map, i.e., commutes with $\partial$, is straightforward to see. That $T$ is a cochain map follows from the fact that $\pi(\xi)$ commutes with $\pi(f)$ for all $f\in F$. Define now $C^{n+1}(G,X)\overset{R^{n+1}}\longrightarrow C^n(F,X)$ for $n\geqslant 0$  by 
\maths{
(R^{n+1}\phi)(f_1,\ldots,f_n)=\sum_{i=1}^{n+1}(-1)^{i+1}\phi(f_1,\ldots, f_{i-1}, \xi, f_i,\ldots, f_n)
}
and let $R^{0}x=0$ for all $x\in X=C^0(G,X)$. 

Observe first that, for $x\in X=C^0(G,X)$, we have 
$$
Sx-Tx=x-\pi(\xi) x=R^1\partial^1x=R^1\partial^1x+\partial^0R^{0}x.
$$
We also claim that,  for all $\phi\in C^{n}(G,X)$ with $n\geqslant 1$ and $f_1,\ldots,f_n\in F$, we have
$$
\phi(f_1,\ldots,f_n)-\pi(\xi)\phi (f_1,\ldots,f_n)=(R^{n+1}\partial^{n+1}\phi)(f_1,\ldots,f_n)+ (\partial^{n}R^{n}\phi)(f_1,\ldots,f_n),
$$
whereby $S-T=R\partial+\partial R$ and thus showing that $S$ and $T$ are homotopic.

To see this, we compute
\maths{
(\partial^{n}R^{n}&\phi)(f_1,\ldots, f_n)\\
=&-\pi(f_1)(R^{n}\phi)(f_2,\ldots, f_n)
+(-1)^{n-1}(R^{n}\phi)(f_1,\ldots, f_{n-1})\\
&-\sum_{j=1}^{n-1}(-1)^j(R^{n}\phi)(f_1,\ldots, f_{j-1}, f_jf_{j+1}, f_{j+2}, \ldots, f_n)\\
%
%First sums%
%
=&-\pi(f_1)\Big[\sum_{i=1}^n(-1)^{i+1}\phi(f_2,\ldots, f_{i},\xi,f_{i+1}, \ldots, f_n)\Big]\\
&+   (-1)^{n-1}  \Big[\sum_{i=1}^n(-1)^{i+1}\phi(f_1,\ldots, f_{i-1},\xi,f_{i}, \ldots, f_{n-1})\Big] \\
&-\sum_{j=1}^{n-1}(-1)^j    \sum_{i=j+2}^{n+1}(-1)^{i}  \phi(f_1,\ldots,  f_{j-1}, f_jf_{j+1}, f_{j+2}, \ldots,f_{i-1},\xi, f_i, \ldots, f_n)   \\
&-\sum_{j=1}^{n-1}(-1)^j    \sum_{i=1}^{j}(-1)^{i+1}    \phi(f_1,\ldots, f_{i-1},\xi, f_{i},\ldots, f_{j-1}, f_jf_{j+1}, f_{j+2},  \ldots, f_n)   \\
%
%Second sums%
%
=&-\pi(f_1)\Big[\sum_{i=2}^{n+1}(-1)^i\phi(f_2,\ldots, f_{i-1},\xi,f_{i}, \ldots, f_n)\Big]\\
&+   (-1)^n  \Big[\sum_{i=1}^n(-1)^i\phi(f_1,\ldots, f_{i-1},\xi,f_{i}, \ldots, f_{n-1})\Big] \\
&+\sum_{i=3}^{n+1}   \sum_{j=1}^{i-2}(-1)^{i+j-1}  \phi(f_1,\ldots,  f_{j-1}, f_jf_{j+1}, f_{j+2}, \ldots,f_{i-1},\xi, f_i, \ldots, f_n)   \\
&+\sum_{i=1}^{n-1}   \sum_{j=i}^{n-1}(-1)^{i+j}    \phi(f_1,\ldots, f_{i-1},\xi, f_{i},\ldots, f_{j-1}, f_jf_{j+1}, f_{j+2},  \ldots, f_n).  \\
}

%%%%%%%%%%%%%%%%%%%%%%%%%%%%%%%

Thus, 
\maths{
(R^{n+1}&\partial^{n+1}\phi)(f_1,\ldots, f_n)
=\sum_{i=1}^{n+1}(-1)^{i+1}(\partial^{n+1}\phi)(f_1,\ldots, f_{i-1}, \xi, f_i,\ldots, f_n)\\
%
%First sums%
%
=&-\pi(\xi)\phi(f_1,\ldots, f_n)
-\sum_{i=2}^{n+1}(-1)^{i+1}\pi(f_1)\phi(f_2,\ldots, f_{i-1}, \xi, f_{i},\ldots, f_n)\\
&+\sum_{i=1}^{n}(-1)^{i+1+n}\phi(f_1,\ldots, f_{i-1}, \xi, f_{i},\ldots, f_{n-1})+(-1)^{2n+2}\phi(f_1,\ldots, f_n)\\
&-\sum_{i=3}^{n+1}\sum_{j=1}^{i-2}(-1)^{i+1+j}\phi(f_1,\ldots, f_{j-1}, f_jf_{j+1}, f_{j+2}, \ldots, f_{i-1},\xi, f_{i}, \ldots, f_n)   \\
&-\sum_{i=1}^{n-1}\sum_{j=i}^{n-1}(-1)^{i+1+j+1}\phi(f_1,\ldots, f_{i-1},\xi, f_i, \ldots, f_{j-1}, f_jf_{j+1}, f_{j+2}, \ldots, f_n)   \\
&-\sum_{i=1}^n(-1)^{i+1+i}\phi(f_1,\ldots,  f_{i-1}, \xi f_i, f_{i+1},\ldots, f_n)\\
&-\sum_{i=2}^{n+1}(-1)^{i+1+i-1}\phi(f_1,\ldots,  f_{i-2}, f_{i-1}\xi, f_{i},\ldots, f_n)\\
%
%Second sums%
%
=&\big(I-\pi(\xi)\big)\phi(f_1,\ldots, f_n)
+\pi(f_1)\Big[\sum_{i=2}^{n+1}(-1)^i\phi(f_2,\ldots, f_{i-1}, \xi, f_{i},\ldots, f_n)\Big]\\
&-(-1)^{n}\sum_{i=1}^{n}(-1)^{i}\phi(f_1,\ldots, f_{i-1}, \xi, f_{i},\ldots, f_{n-1})\\
&-\sum_{i=3}^{n+1}\sum_{j=1}^{i-2}(-1)^{i+j-1}\phi(f_1,\ldots, f_{j-1}, f_jf_{j+1}, f_{j+2}, \ldots, f_{i-1},\xi, f_{i}, \ldots, f_n)   \\
&-\sum_{i=1}^{n-1}\sum_{j=i}^{n-1}(-1)^{i+j}\phi(f_1,\ldots, f_{i-1},\xi, f_i, \ldots, f_{j-1}, f_jf_{j+1}, f_{j+2}, \ldots, f_n)   \\
&+\sum_{i=1}^n\phi(f_1,\ldots,  f_{i-1}, \xi f_i, f_{i+1},\ldots, f_n)\\
&-\sum_{i=2}^{n+1}\phi(f_1,\ldots,  f_{i-2}, \xi f_{i-1}, f_{i+1},\ldots, f_n)\\
%
%Third sums%
%
=&\big(I-\pi(\xi)\big)\phi(f_1,\ldots, f_n)-(\partial^{n}R^{n}\phi)(f_1,\ldots, f_n).
}
So $S-T=R\partial+\partial R$ as claimed.
\end{proof}

The next lemma is well-known in  the discrete setting (see, e.g., Proposition 0.3 \cite{brown}). For the sake of completeness, we include a proof adapted to our setup. 
\begin{lemme}\label{split exact}
The following conditions are equivalent for a group $G$ and a Banach $G$-module $(X,\pi)$.
\begin{enumerate}
\item The cochain identity map $C^\bullet(G,X)\overset I\longrightarrow C^\bullet(G,X)$ is null-homotopic,
\item there exists an invertible null-homotopic cochain map $C^\bullet(G,X)\longrightarrow C^\bullet(G,X)$,
\item the cochain complex
$$
\{0\}\overset{\partial^0}\longrightarrow C^0(G,X)\overset{\partial^1}\longrightarrow  C^1(G,X)\overset{\partial^2} \longrightarrow C^2(G,X)\overset{\partial^3} \longrightarrow C^3(G,X)\overset{\partial^4} \longrightarrow \cdots
$$
is {\em split exact}, that is, $H^n(G,X)=\{0\}$  and $B^n(G,X)=Z^n(G,X)$ is complemented in $C^n(G,X)$ for all $n\geqslant 0$.
\end{enumerate}
\end{lemme}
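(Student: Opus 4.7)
The plan is to prove $(1) \Leftrightarrow (2)$ by a one-line algebraic manipulation, $(1) \Rightarrow (3)$ by showing that $P^n := \partial^n \circ R^n$ is a continuous projection onto $B^n = Z^n$, and $(3) \Rightarrow (1)$ by building a null-homotopy from a splitting together with a continuous inverse to $\partial$ on a topological complement of $Z^n$.

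For $(1) \Leftrightarrow (2)$, one direction is trivial (take $T = I$). For the other, observe that any invertible cochain map $T$ has an inverse $T^{-1}$ that is again a cochain map, i.e., commutes with $\partial$. Thus, if $T = \partial R + R\partial$, then $R' := T^{-1}R$ satisfies $I = T^{-1}T = T^{-1}\partial R + T^{-1}R\partial = \partial R' + R'\partial$, as needed.

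For $(1) \Rightarrow (3)$, suppose $I = \partial R + R\partial$ holds at every level. Applied at level $n-1$ this gives $R^n\partial^n = I - \partial^{n-1}R^{n-1}$, and combined with $\partial\circ\partial = 0$ one computes
\[
(\partial^n R^n)^2 = \partial^n(R^n\partial^n)R^n = \partial^n(I - \partial^{n-1}R^{n-1})R^n = \partial^n R^n,
\]
so $P^n := \partial^n R^n$ is a continuous idempotent. Its range lies in $B^n$, while every $\phi \in Z^n$ satisfies $\phi = \partial^n R^n\phi + R^{n+1}\partial^{n+1}\phi = \partial^n R^n\phi = P^n\phi$. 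This simultaneously yields $Z^n \subseteq B^n$, hence $Z^n = B^n$, and exhibits $P^n$ as a continuous projection onto $B^n$, so $B^n$ is complemented with complement $\ker P^n$.

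For $(3) \Rightarrow (1)$, fix continuous projections $P^n$ of $C^n(G,X)$ onto $Z^n = B^n$ and set $W^n := \ker P^n$, giving the topological decomposition $C^n(G,X) = Z^n \oplus W^n$. Since $\ker\partial^{n+1} = Z^n$ and $\partial^{n+1}\bigl(C^n(G,X)\bigr) = B^{n+1} = Z^{n+1}$, the restriction $\partial^{n+1}|_{W^n}\colon W^n \to Z^{n+1}$ is a continuous linear bijection, and the open mapping theorem furnishes a continuous inverse $\sigma^n\colon Z^{n+1} \to W^n$. Set $R^0 := 0$ and $R^{n+1} := \sigma^n \circ P^{n+1}$ for $n\geqslant 0$. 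Given $\psi \in C^n(G,X)$ decomposed as $\psi = \psi_Z + \psi_W$ with $\psi_Z := P^n\psi$ and $\psi_W := (I-P^n)\psi$, one checks directly that $\partial^n R^n\psi = \partial^n\sigma^{n-1}\psi_Z = \psi_Z$ and $R^{n+1}\partial^{n+1}\psi = \sigma^n\partial^{n+1}\psi_W = \psi_W$, whence $\partial R + R\partial = I$. The $n=0$ case uses $Z^0 = X^G = B^0 = \{0\}$, which is part of (3).

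\emph{Main obstacle.} The only analytically non-trivial point is the continuity of $\sigma^n$ in the direction $(3) \Rightarrow (1)$. When $G$ is countable, $C^n(G,X)$ is Fr\'echet and one simply invokes the classical open mapping theorem for Fr\'echet spaces; in general, $C^n(G,X) \cong \prod_{G^n}X$ is a product of Banach spaces, and one appeals to a suitable extension (for instance De Wilde's open mapping theorem for webbed spaces). Every other step is a mechanical computation using $\partial\circ\partial = 0$.
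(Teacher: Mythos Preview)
Your proof is correct and follows essentially the same route as the paper's: the same algebraic manipulation for $(2)\Rightarrow(1)$, the same verification that $\partial R$ is an idempotent with range $B^n=Z^n$ for $(1)\Rightarrow(3)$, and the same construction of $R$ from a splitting plus an inverse of $\partial|_{W^n}$ for $(3)\Rightarrow(1)$. If anything, you are more explicit than the paper about the continuity of $\sigma^n$; the paper simply asserts that $Y^n\overset{\partial^{n+1}}\longrightarrow B^{n+1}(G,X)$ ``is an isomorphism and thus admits an inverse'' without naming an open mapping theorem, so your remark about needing De Wilde (or restricting to countable $G$) addresses a point the paper leaves implicit.
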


Before beginning the proof, observe that, when the cochain complex is split exact, say 
$$
C^n(G,X)=B^n(G,X)\oplus Y^n
$$ 
for some sequence $Y^0, Y^1, \ldots$ of closed linear subspaces $Y^n\subseteq C^n(G,X)$, then we get exact sequences
\maths{
0\longrightarrow Y^{n-1}\overset{\partial^n}\longrightarrow B^n(G,X)\oplus Y^n \overset{\partial^{n+1}}\longrightarrow B^{n+1}(G,X)\longrightarrow 0
}
where $B^n(G,X)={\sf rg}\,\partial^n={\sf ker}\,\partial^{n+1}$ and so $ Y^{n-1} \overset{\partial^{n}}\longrightarrow B^{n}(G,X)$ is an isomorphism.

\begin{proof}
(2)$\saa$(1):  Suppose that  $C^\bullet(G,X)\overset T\longrightarrow C^\bullet(G,X)$ is an invertible null-homotopic cochain map. This means that we can write
$$
T=\partial R+R\partial
$$
for some continuous linear operators $C^{\bullet+1}(G,X)\overset R\longrightarrow C^\bullet(G,X)$. Now, as $T$ commutes with $\partial$, so does $T\inv$, whereby
$$
I=T\inv T=T\inv \partial R+T\inv  R\partial= \partial \circ (T\inv R)+(T\inv R)\circ \partial
$$
and so $I$ is null-homotopic too.

(1)$\saa$(3): If (1) holds, this means that we may write $I=\partial R+R\partial$ for some continuous linear operators $C^{\bullet}(G,X)\overset R\longrightarrow C^{\bullet-1}(G,X)$, where we set $C^{-1}(G,X)=\{0\}$. In that case,  the composition
$$
C^\bullet(G,X)\overset {\partial R}\longrightarrow C^\bullet(G,X)
$$
satisfies
$$
(\partial R)^2=\partial R\partial R+R0R=\partial R\partial R+R\partial \partial R=I\partial R=\partial R,
$$
that is, $\partial R$ is a continuous linear projection onto its image. Also, if $\psi\in B^n(G,X)$, write $\psi=\partial \phi$ for some $\phi\in C^{n-1}(G,X)$ and note that
$$
\partial R\psi=\partial R\partial \phi=\partial R\partial \phi+R\partial \partial \phi=I\partial \phi=\psi.
$$
Therefore, ${\sf rg}\, (\partial R)=B^\bullet(G,X)$ and $\partial R$ is a linear projection of $C^\bullet(G,X)$ onto the closed linear subspace $B^\bullet(G,X)$ with complementary projection $I-\partial R=R\partial$. Since $Z^\bullet(G,X)\subseteq {\sf ker}\, R\partial$, it follows that also $Z^\bullet(G,X)=B^\bullet(G,X)$. In other words,  $H^\bullet(G,X)=\{0\}$ and the cochain complex is split exact.

(3)$\saa$(2): Suppose conversely that the cochain complex is split exact. Then, for all $n\geqslant 0$, we have $Z^n(G,X)=B^n(G,X)$ and 
$$
C^n(G,X)=B^n(G,X)\oplus Y^n
$$
for some closed linear subspace $Y^n\subseteq C^n(G,X)$. 
Furthermore, $Y^n\overset{\partial^{n+1} }\longrightarrow B^{n+1}(G,X)$ is an isomorphism and thus admits an inverse. So let $C^{n+1}(G,X)\overset {R^{n+1}}\longrightarrow Y^n$ be $0$ on $Y^{n+1}$ and the inverse of $\partial^{n+1}$ on $B^{n+1}(G,X)$. Also, $R^0$ is the unique operator $C^{0}(G,X)\overset {R^{0}}\longrightarrow \{0\}$.
Then $\partial^{n}R^{n}$ is the continuous linear projection of $C^{n}(G,X)$ onto $B^{n}(G,X)$ along $Y^{n}$, whereas $R^{n+1}\partial^{n+1}$ is the complementary projection onto $Y^{n}$. That is, $I=\partial^{n}R^n+R^{n+1}\partial^{n+1}$, showing that the invertible operator $I$ is null-homotopic.
\end{proof}

\begin{thm}\label{split exact thm}
Suppose $G$ is a group  and $X$ a Banach $G$-module. Assume that $\xi\in G'\cap \Delta G$ is chosen so that $I-\pi(\xi)$ is invertible.  Then the cochain complex
$$
\{0\}\overset{\partial^0}\longrightarrow C^0(G,X)\overset{\partial^1}\longrightarrow  C^1(G,X)\overset{\partial^2} \longrightarrow C^2(G,X)\overset{\partial^3} \longrightarrow C^3(G,X)\overset{\partial^4} \longrightarrow \cdots
$$
is split exact.
\end{thm}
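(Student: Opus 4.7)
The plan is to combine Proposition \ref{homotopic cochain maps} (specialised to $F = G$) with the characterisation of split exactness given in Lemma \ref{split exact}. Indeed, the hypothesis $\xi \in G' \cap \Delta G$ is exactly what is required to feed into Proposition \ref{homotopic cochain maps} with $F=G$, and the invertibility of $I - \pi(\xi)$ will be used to promote the resulting null-homotopy to one of an \emph{invertible} cochain map.

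Concretely, I would first apply Proposition \ref{homotopic cochain maps} with $F = G$. Since $G' = G' \cap \Delta G$ already contains $\xi$ and the restriction operation $\phi \mapsto \phi|_G$ is simply the identity, the proposition yields that the two cochain maps $S, T \colon C^\bullet(G, X) \to C^\bullet(G, X)$ defined by $S\phi = \phi$ and $T\phi = \pi(\xi) \circ \phi$ are homotopic. Consequently, the cochain map $U := S - T$, given pointwise by $U\phi = (I - \pi(\xi)) \circ \phi$, is null-homotopic. (Note that $U$ is indeed a cochain map because $\xi \in G'$ means $\pi(\xi)$ commutes with every $\pi(g)$, so the operator $I - \pi(\xi)$ commutes with the $\pi(g_i)$ appearing in the formula for $\partial$.)

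Next I would observe that $U$ is invertible on every $C^n(G, X)$: by hypothesis $I - \pi(\xi)$ is invertible in $\ku L(X)$, so composition with $(I - \pi(\xi))^{-1}$ provides a continuous two-sided inverse of $U$ at each level of the complex, and this inverse is itself a cochain map (for the same commutation reason). Thus $U$ is an invertible null-homotopic cochain map from $C^\bullet(G,X)$ to itself, which is condition (2) of Lemma \ref{split exact}. The implication (2) $\Rightarrow$ (3) of that lemma then gives split exactness, completing the proof.

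There is no substantive obstacle here: all of the technical work has already been done in Proposition \ref{homotopic cochain maps} (where the explicit homotopy operators $R^{n+1}$ are constructed from $\xi$) and in Lemma \ref{split exact} (where the passage from a null-homotopic invertible cochain map to split exactness is carried out by inverting $U$ and using $I = U^{-1}(\partial R + R\partial) = \partial(U^{-1}R) + (U^{-1}R)\partial$). The only thing one needs to be careful about is verifying that $U$ really is a cochain map and that its inverse commutes with $\partial$, both of which rest on the single observation that $\xi$, and hence $I - \pi(\xi)$, commutes with every $\pi(g)$ because $\xi$ lies in the commutant $G'$.
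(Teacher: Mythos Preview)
Your proposal is correct and matches the paper's proof essentially step for step: apply Proposition~\ref{homotopic cochain maps} with $F=G$ to see that $\phi\mapsto (I-\pi(\xi))\circ\phi$ is a null-homotopic cochain map, observe it is invertible since $I-\pi(\xi)$ is, and conclude via Lemma~\ref{split exact}. The additional remarks you make about why $U$ and $U^{-1}$ commute with $\partial$ are fine but not strictly needed, since Proposition~\ref{homotopic cochain maps} already asserts that $S$ and $T$ are cochain maps.
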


\begin{proof}
As $\xi\in G'\cap \Delta G$, we may apply Proposition \ref{homotopic cochain maps} with $F=G$ to conclude that the cochain map
$$
\phi\mapsto \big(I-\pi(\xi)\big)\circ \phi
$$
is null-homotopic. Also, because the operator ${I-\pi(\xi)}$ is invertible, so is the above cochain map. Therefore,  by Lemma \ref{split exact}, the cochain complex 
$$
\{0\}\overset{\partial^0}\longrightarrow C^0(G,X)\overset{\partial^1}\longrightarrow  C^1(G,X)\overset{\partial^2} \longrightarrow C^2(G,X)\overset{\partial^3} \longrightarrow C^3(G,X)\overset{\partial^4} \longrightarrow \cdots
$$
is split exact.
\end{proof}

\begin{cor}\label{split exact FC centre}
Suppose $G$ is a group and $(X,\pi)$ is a uniformly convex isometric Banach $G$-module. Let $\Phi$ denote the FC-centre of $G$ and assume that $X$ has no almost invariant unit vectors as a $\Phi$-module. Then the cochain complex
$$
\{0\}\overset{\partial^0}\longrightarrow C^0(G,X)\overset{\partial^1}\longrightarrow  C^1(G,X)\overset{\partial^2} \longrightarrow C^2(G,X)\overset{\partial^3} \longrightarrow C^3(G,X)\overset{\partial^4} \longrightarrow \cdots
$$
is split exact.
\end{cor}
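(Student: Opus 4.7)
The strategy is to reduce to Theorem \ref{split exact thm} by producing an element $\xi \in G' \cap \Delta G$ for which $I - \pi(\xi)$ is invertible. Since we already know from Lemma \ref{invertibility} that absence of almost invariant unit vectors yields elements of the simplex with small operator norm, the task is simply to arrange that such an element additionally commutes with every $g \in G$ in $\R G$, and Lemma \ref{commutant} tells us exactly what these commuting elements look like.

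First I would apply Lemma \ref{invertibility} to the restricted module $(X, \pi\!\upharpoonright_\Phi)$, which by hypothesis is a uniformly convex isometric $\Phi$-module without almost invariant unit vectors. This produces a finite set $E \subseteq \Phi$ such that $\norm{\pi(\eta)} < 1$ for every $\eta \in \Delta \Phi$ whose support contains $E$.

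Next I would construct an appropriate commuting element. For each $g \in E \subseteq \Phi$, the $G$-conjugacy class $B_g = g^G$ is finite (by definition of the FC-centre) and contained in $\Phi$ (since $\Phi$ is normal). Set
\[
\xi = \frac{1}{|E|} \sum_{g \in E} \ov{B_g} \in \R G.
\]
By Lemma \ref{commutant} applied with $F = G$, each class average $\ov{B_g}$ lies in $G' \cap \Delta G$, and since this intersection is convex, so does $\xi$. Moreover $\xi \in \Delta \Phi$ and $E \subseteq {\sf supp}(\xi)$, so $\norm{\pi(\xi)} < 1$ by the choice of $E$. The Neumann series then shows that $I - \pi(\xi)$ is invertible in $\ku L(X)$.

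Finally I would invoke Theorem \ref{split exact thm} with this $\xi$ to conclude that the cochain complex is split exact. There is no real obstacle here; the only subtlety worth flagging is ensuring that the $\xi$ delivered by Lemma \ref{invertibility} (which a priori lives only in $\Delta \Phi$) can be chosen inside $G' \cap \Delta G$, and this is precisely what is afforded by averaging over $G$-conjugacy classes of elements of the FC-centre.
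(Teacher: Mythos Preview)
Your proposal is correct and follows essentially the same route as the paper: apply Lemma \ref{invertibility} to the $\Phi$-module to obtain a finite $E\subseteq\Phi$, then replace the naive average over $E$ by an element of $G'\cap\Delta G$ supported on the union of the $G$-conjugacy classes of elements of $E$ (the paper simply enlarges $E$ to that union and takes the uniform average, whereas you average the class averages, but the effect is identical), and finally invoke Theorem \ref{split exact thm}.
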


\begin{proof}
Because $X$ is uniformly convex and has no almost invariant unit vectors as a $\Phi$-module, by Lemma \ref{invertibility}, there is a finite set $E\subseteq \Phi$ so that $\norm{\pi(\xi)}<1$ for all $\xi \in \Delta G$ with $E\subseteq {\sf supp}\,(\xi)$. Enlarging $E$, we may suppose that it is a union of conjugacy classes in $G$, whereby $\xi= \frac1{|E|}\sum_{g\in E}g\in G'\cap \Delta G$. By Neumann's Lemma, $I-\pi(\xi)$ is invertible. The result thus follows directly from Theorem \ref{split exact thm}.
\end{proof}

\begin{thm}
Suppose $G$ is a group and $(X,\pi)$ is a uniformly convex isometric Banach $G$-module. Assume also that $F$ is a subgroup of $G$ so that $X$ has no almost invariant unit vectors as a ${\sf FC}_G(F)$-module. Then the restriction map
$$
H^n(G,X)\to H^n(F,X)
$$
is zero for all $n\geqslant 0$.
\end{thm}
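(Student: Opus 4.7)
The plan is to produce $\xi\in F'\cap\Delta G$ for which $I-\pi(\xi)$ is invertible on $X$ and then exploit Proposition \ref{homotopic cochain maps}. I would apply Lemma \ref{invertibility} to the restricted isometric ${\sf FC}_G(F)$-module $(X,\pi)$: since it has no almost invariant unit vectors, there is a finite set $E\subseteq{\sf FC}_G(F)$ such that $\norm{\pi(\eta)}<1$ for every $\eta\in\Delta{\sf FC}_G(F)$ with $E\subseteq{\sf supp}(\eta)$. Each element of $E$ has a finite $F$-conjugacy class by definition of ${\sf FC}_G(F)$, so I would enlarge $E$ to the (still finite) union of those classes; the enlarged $E$ is then $F$-conjugation invariant and contained in ${\sf FC}_G(F)$. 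Setting $\xi=\frac{1}{|E|}\sum_{g\in E}g$ yields ${\sf supp}(\xi)=E$, hence $\norm{\pi(\xi)}<1$ and $I-\pi(\xi)$ is invertible via its Neumann series. Moreover, $\xi$ is a convex combination of $F$-class averages, so Lemma \ref{commutant} gives $\xi\in F'\cap\Delta G$.

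With $\xi$ chosen, Proposition \ref{homotopic cochain maps} ensures that the cochain maps $S,T\colon C^\bullet(G,X)\to C^\bullet(F,X)$ given by $S\phi=\phi|_F$ and $T\phi=\pi(\xi)\circ\phi|_F$ are homotopic, hence induce the same map on cohomology. Thus for every cocycle $\phi\in Z^n(G,X)$ we get $[(I-\pi(\xi))\circ\phi|_F]=0$ in $H^n(F,X)$. Because $\xi\in F'$ commutes with every $\pi(f)$ for $f\in F$, postcomposition by $I-\pi(\xi)$ is itself a cochain map on $C^\bullet(F,X)$; the operator $(I-\pi(\xi))^{-1}$ also commutes with each $\pi(f)$ (any operator commuting with an invertible operator commutes with its inverse), so postcomposition by it provides an inverse cochain map. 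Therefore postcomposition by $I-\pi(\xi)$ induces an automorphism of $H^n(F,X)$, and injectivity of that automorphism forces $[\phi|_F]=0$. This shows the restriction map $H^n(G,X)\to H^n(F,X)$ vanishes for every $n\geqslant 0$.

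The only delicate point I expect is the first step: the hypothesis only supplies invertibility data in $\Delta{\sf FC}_G(F)$, yet Proposition \ref{homotopic cochain maps} demands an element of the commutant $F'$. The $F$-symmetrisation of $E$ bridges this gap, which is why the hypothesis on the ${\sf FC}_G(F)$-module structure, rather than on the full $G$-module, is the right one.
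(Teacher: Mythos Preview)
Your proof is correct and follows essentially the same approach as the paper: both construct $\xi\in F'\cap\Delta G$ with $\norm{\pi(\xi)}<1$ by $F$-symmetrising the finite set $E$ provided by Lemma~\ref{invertibility}, and both then invoke Proposition~\ref{homotopic cochain maps}. The only cosmetic difference is in the final step: the paper factors the restriction as $\Omega\Theta$ with $\Theta(\phi)=(I-\pi(\xi))\circ\phi|_F$ null-homotopic and $\Omega(\psi)=(I-\pi(\xi))^{-1}\circ\psi$ a cochain map, whereas you argue that postcomposition by $I-\pi(\xi)$ is a cochain automorphism of $C^\bullet(F,X)$ and hence injective on cohomology---these are equivalent formulations of the same idea.
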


\begin{proof}
Because $X$ is uniformly convex and has no almost invariant unit vectors as a ${\sf FC}_G(F)$-module, by Lemma \ref{invertibility}, there is a finite set $E\subseteq {\sf FC}_G(F)$ so that $\norm{\pi(\xi)}<1$ for all $\xi \in \Delta\big( {\sf FC}_G(F)\big)$ with $E\subseteq {\sf supp}\,(\xi)$. Enlarging $E$, we may suppose that it is a union of $F$-conjugacy classes, whereby $\xi= \frac1{|E|}\sum_{g\in E}g\in F'\cap \Delta G$. By Neumann's Lemma, $I-\pi(\xi)$ is invertible. 
Observe now that the restriction map $C^n(G,X)\to C^n(F,X)$ is the composition of the two cochain maps
$$
C^n(G,X)\overset\Theta\longrightarrow C^n(F,X)\overset\Omega\longrightarrow C^n(F,X)
$$
defined by
$$
\Theta(\phi)=(I-\pi(\xi))\circ \phi|_F, \qquad \Omega(\psi)=(I-\pi(\xi))\inv\circ \psi.
$$
By Proposition \ref{homotopic cochain maps}, $\Theta$ is null-homotopic and hence so is the composition $\Omega\Theta$.
\end{proof}

%%%%%%%%%%%%%%%%%%%%%%%%%%%%%%%%%%%%%%%%%%%%%%%%%%%%%%%
%%%%%%%%%%%%%%%%%%%%%%%%%%%%%%%%%%%%%%%%%%%%%%%%%%%%%%%
%%%%%%%%%%%%%%%%%%%%%%%%%%%%%%%%%%%%%%%%%%%%%%%%%%%%%%%
%%%%%%%%%%%%%%%%%%%%%%%%%%%%%%%%%%%%%%%%%%%%%%%%%%%%%%%
%%%%%%%%%%%%%%%%%%%%%%%%%%%%%%%%%%%%%%%%%%%%%%%%%%%%%%%
%%%%%%%%%%%%%%%%%%%%%%%%%%%%%%%%%%%%%%%%%%%%%%%%%%%%%%%

\section{Reduced cohomology}\label{reduced cohomology}

Recall that, when $G$ is a group and $(X,\pi)$ is a Banach $G$-module, the space $C^n(G,X)$ of $n$-cochains is nothing but the product 
$$
\prod_{G^n}X
$$
and thus is naturally equipped with the Tychonoff product topology. Because $C^n(G,X)\overset{\partial^{n+1}}\longrightarrow C^{n+1}(G,X)$ is a continuous linear operator, the kernel $Z^n(G,X)={\sf ker}\,\partial^{n+1}$ will be closed in $C^n(G,X)$. In contradistinction, $B^n(G,X)={\sf rg}\,\partial^n$ may not be closed and thus the cohomology $H^n(G,X)=Z^n(G,X)/B^n(G,X)$ may not be a Hausdorff space. For this reason, we define the  {\em reduced cohomology} of the Banach module to be the quotient space
$$
\ov H^n(G,X)=C^n(G,X)\Big/\ov{B^n(G,X)}.
$$

\begin{lemme}\label{preryll}
Suppose $X$ is a strictly convex, reflexive Banach space and that $\ku S\subseteq \ku L(X)$ is a semigroup of contractions, that is, so that $\norm{S}\leqslant 1$ for all $S\in \ku S$. Assume furthermore that $\bigcap_{\ku S}\ker(I-S)=\{0\}$. Then, for all finite subsets $E\subseteq X$ and $\eps>0$, there is some $T\in {\sf conv}(\ku S)$ so that
$$
sup_{x\in E}\norm{Tx}<\eps.
$$
\end{lemme}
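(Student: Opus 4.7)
My plan is to reduce the general statement to a single-vector version by induction on $|E|$, exploiting the fact that ${\sf conv}(\ku S)$ is a semigroup of contractions, and then to prove the single-vector case by a standard Alaoglu--Birkhoff-style minimum-norm argument. As a preliminary I would observe that ${\sf conv}(\ku S)$ is itself closed under composition: since $\ku S$ is a semigroup, the distributive law rewrites
$$
\Big(\sum_i\lambda_iS_i\Big)\Big(\sum_j\mu_jR_j\Big)=\sum_{i,j}\lambda_i\mu_jS_iR_j
$$
as a convex combination of elements of $\ku S$, and the triangle inequality ensures that any such convex combination is again a contraction.

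For the single-vector case, I would fix $x\in X$ and consider $K_x=\ov{\sf conv}(\ku Sx)$. Because each $S\in \ku S$ is a contraction, $K_x$ is bounded; being convex and norm-closed it is weakly closed, and reflexivity then makes it weakly compact. The norm is weakly lower semicontinuous, so $K_x$ attains its infimum norm at some vector $y$, and strict convexity forces this minimizer to be unique, since any other minimizer $y'$ would make the midpoint $(y+y')/2\in K_x$ attain the same norm, contradicting the absence of a line segment in the unit sphere. Continuity and linearity of $S\in \ku S$ together with $S\ku S\subseteq \ku S$ give $S(K_x)\subseteq K_x$, so $Sy\in K_x$ with $\norm{Sy}\leqslant \norm y$; by uniqueness of the minimizer, $Sy=y$. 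The hypothesis $\bigcap_{\ku S}\ker(I-S)=\{0\}$ then forces $y=0$, so $0\in K_x$, and since every element of ${\sf conv}(\ku Sx)$ has the form $Tx$ with $T\in {\sf conv}(\ku S)$, I obtain $T\in {\sf conv}(\ku S)$ with $\norm{Tx}$ arbitrarily small.

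To handle a general finite set $E=\{x_1,\ldots,x_n\}$, I would induct on $k$ to produce $T_k\in {\sf conv}(\ku S)$ with $\norm{T_kx_i}<\eps$ for all $i\leqslant k$. Given $T_k$, the single-vector case applied to the vector $T_kx_{k+1}$ yields $S\in {\sf conv}(\ku S)$ with $\norm{ST_kx_{k+1}}<\eps$, and setting $T_{k+1}=ST_k$ places $T_{k+1}$ inside ${\sf conv}(\ku S)$ by the preliminary observation; for $i\leqslant k$, the contractivity of $S$ gives $\norm{T_{k+1}x_i}=\norm{ST_kx_i}\leqslant \norm{T_kx_i}<\eps$. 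The one delicate step is the invariance $S(K_x)\subseteq K_x$, which hinges on the semigroup property of $\ku S$ together with the linearity and continuity of $S$ to pass $S$ through both the convex hull and the norm closure; everything else is routine from reflexivity and strict convexity.
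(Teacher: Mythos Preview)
Your proof is correct and follows essentially the same route as the paper: the single-vector case via the unique minimum-norm element of $\ov{\sf conv}(\ku Sx)$ (using reflexivity and strict convexity), followed by an inductive composition of contractions in ${\sf conv}(\ku S)$ to handle finite sets. Your presentation is slightly more explicit about why ${\sf conv}(\ku S)$ is a semigroup and why the minimizer exists, but the argument is the same.
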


\begin{proof}
Suppose first that a single vector $x\in X$ and $\eps>0$ are given. We note that $K=\ov{\sf conv}(\ku S x)$ is a closed convex $\ku S$-invariant subset of $X$. Therefore, as $X$ is strictly convex and reflexive, $K$ has a unique element $z\in K$ of minimal norm. As $\norm{Sz}\leqslant \norm z$  and $Sz\in K$ for all $S\in \ku S$, it follows that $z\in \bigcap_{\ku S}\ker(I-S)=\{0\}$. We may thus find some $T\in {\sf conv}(\ku S)$ so that $Tx\in {\sf conv}(\ku S x)$ has norm less than $\eps$.

Now, if instead a finite set $E=\{x_1,\ldots,x_n\}\subseteq X$ and $\eps>0$ are given, we pick successively $T_1,\ldots, T_n\in {\sf conv}(\ku S)$ so that $\norm{T_iT_{i-1}\cdots T_1x_i}<\eps$ and let $T=T_n\cdots T_1\in {\sf conv}(\ku S)$. It then follows that
$$
\norm{Tx_i}\leqslant \norm{T_n\cdots T_{i+1}}\cdot\norm{T_iT_{i-1}\cdots T_1x_i}<\eps
$$
for all $i$.
 \end{proof}

In the context of reduced cohomology, the following result takes the place of Lemma \ref{invertibility}.

\begin{lemme}\label{ryll}
Suppose $G$ is a group and $(X,\pi)$ is a separable reflexive isometric Banach $G$-module. Assume also that $F$ is a subgroup of $G$ so that $X^{{\sf FC}_G(F)}=\{0\}$. Then, for all finite subsets $E\subseteq X$ and $\eps>0$, there is some $\xi\in F'\cap \Delta G$ so that 
$$
\norm{\pi(\xi)x}<\eps
$$
for all $x\in E$.
\end{lemme}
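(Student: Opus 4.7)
The plan is to reduce the lemma to a direct application of Lemma \ref{preryll}, taking $\ku S=\pi(F'\cap\Delta G)\subseteq\ku L(X)$ after first endowing $X$ with a suitable strictly convex $G$-invariant norm.

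First I would invoke G. Lancien's renorming theorem stated in Section \ref{sec:banach}: since $X$ is separable reflexive and the action $\pi$ is isometric, there is a locally uniformly convex (in particular, strictly convex) $G$-invariant norm $\triple\cdot$ on $X$ that is equivalent to the original norm $\norm\cdot$, with constants $\frac 1K\norm\cdot\leqslant\triple\cdot\leqslant K\norm\cdot$ for some $K\geqslant 1$. In this new norm, $(X,\triple\cdot)$ is a strictly convex, reflexive isometric $G$-module with the same underlying linear action $\pi$.

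Next I would verify the three hypotheses of Lemma \ref{preryll} for $\ku S:=\pi(F'\cap\Delta G)$ acting on $(X,\triple\cdot)$. Closure under composition follows from Lemma \ref{commutant}, which tells us that $F'\cap\Delta G$ is a multiplicative subsemigroup of $\R G$; composed with the algebra representation $\pi$, this produces a multiplicative subsemigroup of $\ku L(X)$. That each element of $\ku S$ is a $\triple\cdot$-contraction follows because, for $\xi=\sum_it_ig_i\in\Delta G$, the operator $\pi(\xi)=\sum_it_i\pi(g_i)$ is a convex combination of $\triple\cdot$-isometries. Finally, the hypothesis $X^{{\sf FC}_G(F)}=\{0\}$ together with Lemma \ref{X^F'} (applied in the strictly convex norm $\triple\cdot$) yields
\[
\bigcap_{\xi\in F'\cap \Delta G}\ker\bigl(I-\pi(\xi)\bigr)=X^{{\sf FC}_G(F)}=\{0\}.
\]

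With these verifications in place, Lemma \ref{preryll} delivers, for any finite $E\subseteq X$ and any $\eps'>0$, an operator $T\in{\sf conv}(\ku S)$ with $\triple{Tx}<\eps'$ for every $x\in E$. The key observation now is that $F'\cap\Delta G$ is itself convex, being the intersection of the linear subspace $F'$ with the convex set $\Delta G$; since $\pi$ is linear, ${\sf conv}(\ku S)=\pi(F'\cap\Delta G)$, so $T=\pi(\xi)$ for some $\xi\in F'\cap\Delta G$. Converting back to the original norm via $\norm{\pi(\xi)x}\leqslant K\triple{\pi(\xi)x}<K\eps'$ and choosing $\eps'=\eps/K$ yields the desired conclusion. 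The only mildly delicate step is the passage through Lancien's renorming, but this is essential only because Lemma \ref{preryll} and Lemma \ref{X^F'} both require strict convexity, which the original norm need not have.
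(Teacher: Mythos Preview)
Your proof is correct and follows essentially the same approach as the paper: renorm via Lancien to obtain strict convexity, then apply Lemma \ref{preryll} to the semigroup $\ku S=\pi(F'\cap\Delta G)$ using Lemma \ref{X^F'} to verify the trivial-fixed-point hypothesis. Your treatment is in fact slightly more careful than the paper's, which simply assumes the renorming without tracking the equivalence constants and leaves implicit the observation that ${\sf conv}(\ku S)=\ku S$.
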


\begin{proof}
Recall first that, by the previously mentioned theorem of G. Lancien \cite{lancien}, because $(X, \norm\cdot)$ is separable reflexive, it admits an isometry-invariant locally uniformly convex renorming $(X,\triple\cdot)$ and thus so that the module action $G\overset \pi\curvearrowright (X, \triple\cdot)$ remains isometric. We may thus simply assume that  $X$ is strictly convex and reflexive. 

Consider the convex semigroup of contractions 
$$
\ku S=\{\pi(\xi)\in \ku L(X)\del \xi\in F'\cap \Delta G\}
$$ 
and observe, by Lemma \ref{X^F'},  that $\bigcap_{S\in \ku S}\ker(I-S)=\{0\}$. Therefore, by Lemma \ref{preryll}, we find that, for all finite subsets $E\subseteq X$ and $\eps>0$, there is some $\xi\in  F'\cap \Delta G$ so that
$$
\norm{\pi(\xi)x}<\eps
$$
for all $x\in E$.
\end{proof}

We are now able to obtain a strengthening of the central result of \cite{BRS} for the case of separable reflexive isometric Banach modules.
\begin{thm}\label{ov H}
Suppose that $G$ is a group and $(X,\pi)$  a separable reflexive isometric Banach $G$-module. Assume also that $F\leqslant G$ is a subgroup with $X^{{\sf FC}_G(F)}=\{0\}$. Then the restriction map
$$
\ov H^n(G,X)\to \ov H^n(F,X)
$$
is zero for all $n\geqslant 0$.
\end{thm}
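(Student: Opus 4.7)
The plan is to mimic the strategy used for the unreduced version of this theorem, but with the quantitative invertibility given by Lemma \ref{invertibility} replaced by the approximate vanishing provided by Lemma \ref{ryll}. The observation that makes this substitution work is that the topology on $C^n(F,X)$ is pointwise convergence on $F^n$, so to show $\phi|_F \in \ov{B^n(F,X)}$ it suffices to approximate $\phi|_F$ by coboundaries on each finite evaluation set separately.

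First, I would fix a cocycle $\phi \in Z^n(G,X)$ and invoke Proposition \ref{homotopic cochain maps}: for every $\xi \in F' \cap \Delta G$, the cochain maps $S\phi = \phi|_F$ and $T\phi = \pi(\xi) \circ \phi|_F$ are homotopic via an explicit chain homotopy $R$. Since $\partial\phi = 0$, the homotopy formula degenerates to
\[
\phi|_F - \pi(\xi) \circ \phi|_F \;=\; \partial R\phi + R\partial\phi \;=\; \partial R\phi \;\in\; B^n(F,X),
\]
so the task reduces to showing that $\pi(\xi) \circ \phi|_F$ can be made arbitrarily small in $C^n(F,X)$ by a suitable choice of $\xi$.

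A basic neighbourhood of $0$ in $C^n(F,X)$ has the form $U_{E,\eps} = \{\psi \del \norm{\psi(f_1,\ldots,f_n)} < \eps \text{ for all } (f_1,\ldots,f_n) \in E\}$ for some finite set $E \subseteq F^n$ and $\eps > 0$. The image $\phi(E)$ is then a finite subset of $X$, and Lemma \ref{ryll}, whose hypothesis is precisely the standing assumption $X^{{\sf FC}_G(F)} = \{0\}$, furnishes a $\xi \in F' \cap \Delta G$ with $\norm{\pi(\xi) y} < \eps$ for every $y \in \phi(E)$. Then $\pi(\xi) \circ \phi|_F$ lies in $U_{E,\eps}$, and correspondingly $\partial R\phi \in B^n(F,X)$ lies within $U_{E,\eps}$ of $\phi|_F$. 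Letting $E$ exhaust $F^n$ and $\eps \to 0$ exhibits $\phi|_F \in \ov{B^n(F,X)}$, which is the asserted vanishing on reduced cohomology.

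The main obstacle in this approach lies in the preparatory Lemma \ref{ryll}, not in the cohomological reduction itself. In the purely reflexive setting $I - \pi(\xi)$ is no longer invertible, and so a convergent family of averaging operators must be extracted directly from the hypothesis $X^{{\sf FC}_G(F)} = \{0\}$; this was accomplished via Lancien's locally uniformly convex renorming (reducing to the strictly convex case) together with the Ryll-Nardzewski-style Lemma \ref{preryll}. Once that machinery is in hand, the argument above is essentially a formal consequence of the chain homotopy produced by Proposition \ref{homotopic cochain maps}.
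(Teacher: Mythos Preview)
Your proposal is correct and follows essentially the same route as the paper's proof: use Proposition \ref{homotopic cochain maps} to see that $(\phi - \pi(\xi)\circ\phi)|_F \in B^n(F,X)$ for any $\xi \in F'\cap\Delta G$, and then invoke Lemma \ref{ryll} to make $\pi(\xi)\circ\phi|_F$ small on any prescribed finite subset of $F^n$. Your closing remark that the substantive work lies in Lemma \ref{ryll} rather than in the cohomological step is also accurate.
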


\begin{proof}
Suppose that $\phi\in Z^n(G,X)$. We must show that $\phi|_F\in \ov{B^n(F,X)}$. That is, we must verify that for all finite subsets $E\subseteq F^n$ and $\eps>0$ there is some $\psi\in B^n(F,X)$ so that
$$
\norm{\phi(\vec f)-\psi(\vec f)}<\eps
$$
for all $\vec f\in E$. So let $E$ and $\eps$ be given and pick by Lemma \ref{ryll} some $\xi\in F'\cap \Delta G$ so that 
$$
\norm{\pi(\xi)\phi(\vec f)}<\eps
$$
for all $\vec f\in E$. Then
$$
\Norm{\phi(\vec f)-\big(\phi-\pi(\xi)\circ \phi\big)(\vec f)}=\norm{\pi(\xi)\phi(\vec f)}<\eps
$$
whereas
$$
\big(\phi-\pi(\xi)\circ \phi\big)\big|_F\in B^n(F,X)
$$
by Proposition \ref{homotopic cochain maps}. So $\psi=\big(\phi-\pi(\xi)\circ \phi\big)\big|_F$ is as required.
\end{proof}

\begin{prop}
Suppose $G$ is a group with a finite generating set $\Sigma$ and $(X,\pi)$ is a separable reflexive isometric Banach $G$-module with $X^G=\{0\}$. Then, for all $\phi\in Z^1(G,X)$  and  $\eps>0$, there is some $x\in X$ with
$$
\NORM{\sum_{g\in \Sigma}\big(\phi-\partial x\big)(g)}<\eps.
$$
\end{prop}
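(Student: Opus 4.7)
The plan is to set $\sigma=\frac{1}{|\Sigma|}\sum_{g\in\Sigma}g \in \Delta G$ and observe that, since the extension of any cocycle to $\A G$ is affine (Remark \ref{no worries}), a direct computation yields
$$
\sum_{g\in\Sigma}(\phi-\partial x)(g)=|\Sigma|\cdot\bigl[\phi(\sigma)-\bigl(I-\pi(\sigma)\bigr)x\bigr]
$$
for every $x\in X$. Hence it suffices to show that $\phi(\sigma)$ lies in the closure of the range of $I-\pi(\sigma)$.

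Following the proof of Lemma \ref{ryll}, I first invoke Lancien's theorem to replace $\norm\cdot$ by an equivalent locally uniformly convex, and in particular strictly convex, $G$-invariant renorming $\triple\cdot$; in this renorming $\pi(\sigma)$ remains a contraction. The crucial observation is that $\ker\bigl(I-\pi(\sigma)\bigr)=\{0\}$: if $\triple v=1$ and $\pi(\sigma)v=v$, then $v$ is the average of the unit vectors $\{\pi(g)v\del g\in\Sigma\}$, so strict convexity forces $\pi(g)v=v$ for every $g\in\Sigma$, and since $\Sigma$ generates $G$ this means $v\in X^G=\{0\}$.

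Having this, I apply Lemma \ref{preryll} to the commutative semigroup of contractions $\ku S=\{\pi(\sigma)^n\del n\geqslant 1\}$, whose common fixed subspace coincides with $\ker\bigl(I-\pi(\sigma)\bigr)=\{0\}$. This yields a convex combination $T=\sum_k c_k \pi(\sigma)^{n_k}$ with $\triple{T\phi(\sigma)}$ as small as prescribed. Setting $x=\sum_k c_k\phi(\sigma^{n_k})$ and using the cocycle identity $\phi(\sigma^n)=\sum_{j=0}^{n-1}\pi(\sigma)^j\phi(\sigma)$, one has $(I-\pi(\sigma))\phi(\sigma^n)=\bigl(I-\pi(\sigma)^n\bigr)\phi(\sigma)$, so summing against the weights $c_k$ gives
$$
\phi(\sigma)-\bigl(I-\pi(\sigma)\bigr)x=T\phi(\sigma).
$$
Combining with the displayed identity and the norm equivalence between $\norm\cdot$ and $\triple\cdot$ provides the required $x$ for any prescribed $\eps>0$.

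The main obstacle will be establishing $\ker\bigl(I-\pi(\sigma)\bigr)=\{0\}$: this is precisely the step that jointly exploits the strict convexity provided by Lancien's renorming and the generation hypothesis on $\Sigma$. Once that is in hand, everything else is bookkeeping around a clean application of Lemma \ref{preryll}.
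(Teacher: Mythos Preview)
Your proof is correct and follows essentially the same approach as the paper's. Both arguments reduce to the identity $\sum_{g\in\Sigma}(\phi-\partial x)(g)=|\Sigma|\bigl[\phi(\sigma)-(I-\pi(\sigma))x\bigr]$, establish $\ker(I-\pi(\sigma))=\{0\}$ via Lancien's renorming and strict convexity, and then apply Lemma~\ref{preryll} to the semigroup generated by $\pi(\sigma)$; your choice $x=\sum_k c_k\phi(\sigma^{n_k})$ is precisely the paper's $x=\phi(\xi)$ for $\xi=\sum_k c_k\sigma^{n_k}$ (by affinity of $\phi$ on $\A G$), and your telescoping computation is an explicit unpacking of the commutativity identity $(I-\pi(\sigma))\phi(\xi)=(I-\pi(\xi))\phi(\sigma)$ from Remark~\ref{commutativity} that the paper invokes directly.
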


\begin{proof} 
As in the proof of Lemma \ref{ryll}, we may assume that $X$ is strictly convex. Also, as $(\phi-\partial x)(1)=0$ for all $x\in X$, we can assume that $1\in \Sigma$.
Therefore, if $\sigma=\frac1{|\Sigma|}\sum_{g\in \Sigma}g$, the operator $\pi(\sigma)$ has no non-zero invariant vectors. 

Set also
$$
\ku S=\big\{  \pi(\xi)\in \ku L(X)
\del \xi \in {\sf conv}
\{\sigma^n\,|\, n\geqslant 1\}
\big\}.
$$
Then $\ku S$ is a convex semigroup of contractions with $\bigcap_{\ku S}\ker(I-S)=\{0\}$ and so, by Lemma \ref{preryll}, we may find  some $\xi\in  \ku S$ so that
$$
\norm{\pi(\xi)\phi(\sigma)}<\frac\eps {|\Sigma|}.
$$
However, $\sigma\xi=\xi\sigma$ and thus for $x=\phi(\xi)$
$$
\partial x(\sigma)=\big(I-\pi(\sigma)\big)\phi(\xi)=\big(I-\pi(\xi)\big)\phi(\sigma),
$$
whereby
$$
\NORM{\sum_{g\in \Sigma}\big(\phi-\partial x\big)(g)}
={|\Sigma|}\cdot \Norm{\phi(\sigma)-\partial x(\sigma)}
={|\Sigma|}\cdot \Norm{\pi(\xi)\phi(\sigma)}<\eps.
$$
\end{proof}

%%%%%%%%%%%%%%%%%%%%%%%%%%%%%%%%%%%%%%%%%%%%%%%%%%%%%%%
%%%%%%%%%%%%%%%%%%%%%%%%%%%%%%%%%%%%%%%%%%%%%%%%%%%%%%%
%%%%%%%%%%%%%%%%%%%%%%%%%%%%%%%%%%%%%%%%%%%%%%%%%%%%%%%
%%%%%%%%%%%%%%%%%%%%%%%%%%%%%%%%%%%%%%%%%%%%%%%%%%%%%%%
%%%%%%%%%%%%%%%%%%%%%%%%%%%%%%%%%%%%%%%%%%%%%%%%%%%%%%%
%%%%%%%%%%%%%%%%%%%%%%%%%%%%%%%%%%%%%%%%%%%%%%%%%%%%%%%

\section{Affine actions}\label{affine actions}
Recall that, by the Mazur--Ulam Theorem, every surjective isometry $X\overset A\longrightarrow X$ of a Banach space is affine, that is, satisfies 
$$
A\Big(\sum_{i=1}^nt_ix_i\Big)=\sum_{i=1}^nt_iA(x_i)
$$
for all $t_i\in \R$ and $x_i\in X$ so that $\sum_{i=1}^nt_i=1$. Also, if $X\overset A\longrightarrow X$ is any continuous affine map, there is a unique bounded linear operator $X\overset T\longrightarrow X$ and a vector $a\in X$ so that
$$
Ax=Tx+a
$$
for all $x\in X$. Furthermore, if $B$ is another affine map given by $Bx=Sx+b$ for some operator $S$ and vector $b$, then one sees that
$$
ABx=TSx+(Tb+a).
$$
Therefore, if  $G\overset\alpha\curvearrowright X$ is a continuous affine action by a group $G$, we obtain a continuous linear action $G\overset\pi\curvearrowright X$  along with a  map $G\overset \phi\longrightarrow X$ so that
$$
\alpha(g)x=\pi(g)x+ \phi(g)
$$
and 
$$
\phi(gf)=\pi(g)\phi(f)+\phi(g)
$$
for all $g,f\in G$ and $x\in X$. In other words, $\phi$ is nothing but a $1$-cocycle associated with the Banach $G$-module $(X,\pi)$. 

Observe also that, as
$$
\partial x(g)-\phi(g)=x-\pi(g)x-\phi(g)=x-\alpha(g)x,
$$
we have that $\phi=\partial x$ if and only if $x$ is fixed by the action  $G\overset\alpha\curvearrowright X$.

Similarly, the cocycle $\phi$ belongs to $\ov{ B^1(G,X)}$ if and only if, for every finite subset $E\subseteq G$ and $\eps>0$, there is an $x\in X$ so that
$$
\sup_{g\in E}\norm{x-\alpha(g)x}=\sup_{g\in E}\norm{\partial x(g)-\phi(g)}<\eps.
$$

These computations show that, for a fixed Banach $G$-module $(X,\pi)$, there is a bijective correspondence 
$$  
\{\text{Affine actions with linear part $\pi$}\}  \leftrightsquigarrow Z^1(G,X)
$$
between the collection of affine actions $G\overset\alpha\curvearrowright X$ whose linear part is $\pi$ and then the space  $Z^1(G,X)$ of $1$-cocycles $\phi$ given by $\phi(g)=\alpha(g)0$. Furthermore, under this correspondence, the space $B^1(G,X)$ of coboundaries correspond exactly to actions fixing a point in $X$, whereas the space $\ov{B^1(G,X)}$ of {\em almost coboundaries} corresponds to actions having almost fixed points in the following sense.

\begin{defi}
An affine isometric group action $G\overset \alpha\curvearrowright X$ on a Banach space {\em almost fixes a point} if and only if, for all finite subsets $E\subseteq G$ and $\eps>0$, there is some $x\in X$ so that 
$$
\norm{x-\alpha(g)x}<\eps
$$
for all $g\in E$.
\end{defi}

Suppose now that $(X,\pi)$ is an isometric Banach $G$-module and $\phi\in Z^1(G,X)$ is a cocycle with associated affine isometric action $G\overset\alpha\curvearrowright X$. Then because
$$
{\sf im}\, \phi=\{\alpha(g)0\del g\in G\}
$$
we find that the cocycle $\phi$ is bounded if and only if the orbit of $0$ and therefore every orbit is bounded. In this case, $\ov{\sf conv}\big( {\sf im}\, \phi\big)$ is a norm-bounded closed convex $\alpha(G)$-invariant set. 
In particular, if $X$ is reflexive, this implies by the Ryll-Nardzewski fixed-point theorem \cite{ryll} (see also \cite{namioka}) that there is a fixed point for  $G\overset\alpha\curvearrowright X$. In other words, when $X$ is reflexive, $1$-coboundaries are simply the bounded $1$-cocycles.

Consider now a continuous affine group action $G\overset\alpha\curvearrowright X$ and let $\pi$ and $\phi$ denote the associated linear part, respectively, associated cocycle. As before, we canonically extend these to an algebra representation $\R G\overset \pi\longrightarrow \ku L(X)$, respectively, an affine map $\A G\overset \phi\longrightarrow X$. By Remark \ref{no worries}, we have $\phi(\xi\zeta)=\pi(\xi)\phi(\zeta)+\phi(\xi)$ for all $\xi,\zeta\in \A G$ and therefore $\alpha$ extends to an action
$$
\A G\overset\alpha\curvearrowright X
$$
of the multiplicative semigroup $\A G$ by continuous affine transformations of $X$ by setting 
$$
\alpha(\xi)x=\pi(\xi)x+\phi(\xi)
$$
for all $\xi \in \A G$ and $x\in X$. 

Observe then that, because both $\pi$ and $\phi$ are affine mappings, we have that
$$
\alpha\Big(\sum_{i=1}^nt_i\xi_i\Big)(x)= \sum_{i=1}^nt_i\alpha(\xi_i)x
$$
whenever $x\in X$, $\xi_i\in \A G$, $t_i\in \R$ and $\sum_{i=1}^nt_i=1$. In particular, this implies that the convex hull of the orbit $\alpha(G)x$ of $x$ under the affine action by $G$ can be written as
$$
{\sf conv}\big(\alpha(G)x\big)={\alpha\big(\Delta G\big)x}.
$$
Note that another way of expressing these facts is to say that the action mapping 
$$
\A G\times X\overset \alpha\longrightarrow X
$$
is biaffine.

%%%%%%%%%%%%%%%%%%%%%%%%%%%%%%%%%%%%%%%%%%%%%%%%%%%%%%%
%%%%%%%%%%%%%%%%%%%%%%%%%%%%%%%%%%%%%%%%%%%%%%%%%%%%%%%
%%%%%%%%%%%%%%%%%%%%%%%%%%%%%%%%%%%%%%%%%%%%%%%%%%%%%%%

\section{Appendix: Quotients, complementation and property (T)}

Thus far, we have freely used the assumption that $X^{{\sf FC}_G(F)}=\{0\}$ for some subgroup $F$ of $G$ and it is therefore useful to pause to consider the import of this hypothesis. A good deal of the material presented here is well-known and implicit in the literature in various forms. In particular, this applies to the case of isometric reflexive Banach $G$-modules, where some of the arguments below can be significantly simplified by using the Alaoglu--Birkhoff decomposition $X=X^G\oplus X_G$. It may be of value however to observe that reflexivity or weak almost periodicity is not essential for the results that follow.

\begin{exa}[Decompositions along ${\sf FC}_G(F)$ for reflexive Banach modules]
Assume first that $(X,\pi)$ is a reflexive isometric Banach $G$-module and $F\leqslant G$ a subgroup so that $G=F\cdot {\sf FC}_G(F)$. Observe that, because ${\sf FC}_G(F)$ is normalised by $F$ and $G=F\cdot {\sf FC}_G(F)$, we have that ${\sf FC}_G(F)$ is normal in $G$ and therefore that the Alaoglu--Birkhoff decomposition
$$
X=X^{{\sf FC}_G(F)}\oplus X_{{\sf FC}_G(F)}
$$ 
is $G$-invariant. From this we obtain similar decompositions of $C^n(G,X)$,  $Z^n(G,X)$ and $B^n(G,X)$, which in turn induce a decomposition
$$
H^n(G,X)=H^n(G,X^{{\sf FC}_G(F)})\;\oplus\; H^n(G,X_{{\sf FC}_G(F)}).
$$
Working with the second summand $H^n(G,X_{{\sf FC}_G(F)})$ thus essentially corresponds to an initial assumption that  $X^{{\sf FC}_G(F)}=\{0\}$. 
\end{exa}

We now focus on the general case of isometric Banach modules where Alaoglu--Birkhoff decompositions may no longer be available.
So, suppose $(X,\pi)$ is a general  isometric Banach $G$-module and let $\ov{\cdot}\colon X\to X/X^G$ be the natural quotient map, that is, $\ov x=x+X^G$. Then we obtain an isometric Banach $G$-module $(X/X^G,\ov\pi)$ by letting
$$
\ov\pi(g)\ov x=\ov{\pi(g)x}
$$
for $x\in X$ and $g\in G$. With this definition, $\ov{\cdot}\colon X\to X/X^G$ is a $G$-equivariant map and we therefore obtain a cochain map
$$
\ov{\cdot}\colon C^\bullet (G,X)\to C^\bullet(G,X/X^G)
$$ 
by setting
$$
\ov\phi(g_1,\ldots, g_n)=\ov{\phi(g_1,\ldots, g_n)}
$$
for $\phi \in C^n(G,X)$.

\begin{lemme}\label{quotient displacement}
Suppose $G$ is a group and $(X,\pi)$ is an isometric Banach $G$-module. Let $g\in G$ and $x\in X$. Then
$$
\frac 12\cdot \norm{x-\pi(g)x}\;\leqslant\; \norm{\ov x-\ov{\pi}(g)\ov x}_{X/X^G}\;\leqslant\; \norm{x-\pi(g)x}.
$$
Also,  if $\phi\in Z^1(G,X)$ satisfies $\lim_n\frac{\phi(g^n)}n=0$, then
$$
\frac 12\cdot \norm{\phi(g)}\;\leqslant\; \norm{\ov{\phi}(g)}_{X/X^G}\;\leqslant\; \norm{\phi(g)}.
$$
\end{lemme}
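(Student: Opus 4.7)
The two upper bounds are immediate from the definition of the quotient norm applied to $\overline{x - \pi(g)x}$ and $\overline{\phi(g)}$, respectively, by taking $z = 0 \in X^G$ in the defining infimum.

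For the lower bounds I would treat both statements in a uniform way. Writing $v$ for $x - \pi(g)x$ in the first part and for $\phi(g)$ in the second, fix any $z \in X^G$ and set $w = v - z$. The plan is to prove $\norm{w} \geq \norm{z}$; granted this, the triangle inequality yields $\norm{v} \leq \norm{w} + \norm{z} \leq 2\norm{w}$, so $\norm{w} \geq \tfrac12 \norm{v}$, and taking the infimum over $z \in X^G$ produces the required lower bound on $\norm{\bar v}_{X/X^G}$.

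To establish $\norm{w} \geq \norm{z}$, I would iterate $\pi(g)$ and exploit a telescoping identity. Since $\pi(g)z = z$ and $\pi(g)$ is an isometry, $\norm{\pi(g)^k w} = \norm{w}$ and
\[
\sum_{k=0}^{n-1} \pi(g)^k w = \sum_{k=0}^{n-1} \pi(g)^k v - nz, \qquad \NORM{\sum_{k=0}^{n-1} \pi(g)^k w} \leq n\norm{w}.
\]
In part one the telescoping sum $\sum_{k=0}^{n-1}\pi(g)^k(x - \pi(g)x)$ collapses to $x - \pi(g)^n x$, whose norm is at most $2\norm{x}$; in part two a short induction using the cocycle identity $\phi(gf) = \pi(g)\phi(f) + \phi(g)$ yields $\sum_{k=0}^{n-1}\pi(g)^k \phi(g) = \phi(g^n)$, whose norm is $o(n)$ by the standing hypothesis. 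In each case the reverse triangle inequality then gives $n\norm{w} \geq n\norm{z} - o(n)$, and dividing by $n$ and letting $n \to \infty$ delivers $\norm{w} \geq \norm{z}$.

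The only ingredient beyond formal manipulation is spotting the right quantity to telescope: the uniform boundedness of $\norm{x - \pi(g)^n x}$ (which comes from $\pi$ being isometric) in part one, and the sublinearity of $\norm{\phi(g^n)}$ (which is precisely the explicit assumption) in part two. In other words, the hypothesis $\phi(g^n)/n \to 0$ in the second statement is exactly what plays the role that boundedness of orbits plays automatically in the first.
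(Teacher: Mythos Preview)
Your proof is correct and follows essentially the same approach as the paper. Both arguments hinge on the identical telescoping computation $\sum_{k=0}^{n-1}\pi(g)^k v = x-\pi(g)^n x$ (respectively $=\phi(g^n)$) to deduce that $\norm z\leqslant\norm{v-z}$ for every $z\in X^G$, and then conclude via the triangle inequality; the paper merely packages the first step as the statement that the Ces\`aro averages $\pi(\delta_n)v$ tend to $0$ and abstracts the second step into the implication ``$0\in\ov{\pi(\Delta G)y}\Rightarrow \norm y\leqslant 2\norm{\ov y}$'', whereas you carry out both steps directly.
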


\begin{proof}
Let $g\in G$, $x\in X$ and $\phi\in Z^1(G,X)$ be given and set $\delta_n=\frac1n\sum_{i=0}^{n-1}g^i\in \Delta G$. Then
\maths{
\pi(\delta_n)\big(x-\pi(g)x\big)
=\frac1n\sum_{i=0}^{n-1}\pi(g)^i\big(x-\pi(g)x\big)
=\frac{x-\pi(g)^nx}n
\;\Lim{n}0,
}
whereas
\maths{\pi(\delta_n)\phi(g)
=&\frac1n\big(\pi(g^{n-1})\phi(g)+\cdots+\pi(g^3)\phi(g)+\pi(g^2)\phi(g)+\pi(g)\phi(g)+\phi(g)\big)\\
=&\frac1n\big(\pi(g^{n-1})\phi(g)+\cdots+\pi(g^3)\phi(g)+\pi(g^2)\phi(g)+\phi(g^2)\big)\\
=&\frac1n\big(\pi(g^{n-1})\phi(g)+\cdots+\pi(g^3)\phi(g)+\phi(g^3)\big)\\
=&\ldots\\
=&\frac{\phi(g^n)}n.
}

To prove the result, it thus suffices to show that $\norm{y}\leqslant 2 \norm{\ov y}_{X/X^G}$ for all $y\in X$ satisfying $0\in \ov{\pi(\Delta G)y}$. To see this, suppose such a $y$ is given and find $\beta_n\in \Delta G$ so that $\pi(\beta_n)y\to 0$. Then, for all $z\in X^G$, 
\maths{
\norm{z}
=\lim_n\Norm{z+\pi(\beta_n)y}
=\lim_n\Norm{\pi(\beta_n)\big(z+y\big)}
\leqslant \norm{z+y}
}
and so 
\maths{
\norm{y}
\leqslant \norm{z+y}+\norm{z}
\leqslant 2\norm{z+y}.
}
This shows that $\norm{y}\,\leqslant\, 2 \cdot \inf_{z\in X^G}\norm{z+y}=2\norm{\ov y}_{X/X^G}$.
\end{proof}

\begin{lemme}\label{rem}
Suppose $G$ is a group and $(X,\pi)$ is an isometric Banach $G$-module. Then the cochain map $\phi\mapsto \ov \phi$ restricts to a topological isomorphism between the topological vector spaces $\ov{B^1(G,X)}$ and $\ov{B^1(G,X/X^G)}$ and also maps ${B^1(G,X)}$ bijectively onto  ${B^1(G,X/X^G)}$
\end{lemme}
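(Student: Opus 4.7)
The plan is to derive the result directly from the two-sided displacement estimate provided by Lemma~\ref{quotient displacement}. First I observe that for any $\phi=\partial x \in B^1(G,X)$, one has $\phi(g^n)=x-\pi(g^n)x$, which is norm-bounded by $2\norm{x}$ independently of $n$, so $\phi(g^n)/n\to 0$ automatically. Hence Lemma~\ref{quotient displacement} yields the pointwise estimate
$$
\tfrac12 \norm{\phi(g)} \;\leqslant\; \norm{\ov\phi(g)}_{X/X^G} \;\leqslant\; \norm{\phi(g)}
$$
for every $\phi\in B^1(G,X)$ and every $g\in G$. Since $\ov{B^1(G,X)}$ is a linear subspace and the topology on $C^1(G,X)$ is pointwise convergence, this inequality persists on the closure $\ov{B^1(G,X)}$.

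From here, the structural consequences are almost automatic. Continuity of $\phi\mapsto\ov\phi$ is immediate because the quotient map $X\to X/X^G$ is non-expansive. Applying the inequality to a difference $\phi-\phi'\in\ov{B^1(G,X)}$ yields $\norm{\phi(g)-\phi'(g)}\leqslant 2\norm{\ov\phi(g)-\ov{\phi'}(g)}$, which simultaneously gives injectivity of $\phi\mapsto\ov\phi$ and continuity of its inverse on $\ov{B^1(G,X)}$ (and thus also on $B^1(G,X)$). Surjectivity onto $B^1(G,X/X^G)$ is trivial: given $\partial\ov x$, any lift $x$ of $\ov x$ produces a coboundary $\partial x$ in $B^1(G,X)$ mapping to it.

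The only genuinely delicate point is surjectivity onto the closure $\ov{B^1(G,X/X^G)}$. Given $\psi$ in this closure, approximate it pointwise by coboundaries $\partial \ov{x_\alpha}$ and choose lifts $x_\alpha\in X$ of $\ov{x_\alpha}$ (without any consistency between these choices, so that the net $x_\alpha$ has no reason to converge in $X$). Applying the lower estimate to the differences $\partial x_\alpha-\partial x_\beta\in B^1(G,X)$ gives
$$
\norm{\partial x_\alpha(g)-\partial x_\beta(g)}\;\leqslant\; 2\norm{\partial\ov{x_\alpha}(g)-\partial\ov{x_\beta}(g)}\longrightarrow 0,
$$
so for each $g$ the values $\partial x_\alpha(g)$ form a Cauchy net in $X$ converging to some $\phi(g)$. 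This defines a pointwise limit $\phi\in \ov{B^1(G,X)}$ with $\ov\phi=\psi$.

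The main obstacle I anticipate is precisely this last surjectivity step: the lifts $x_\alpha$ may roam freely within cosets of $X^G$, so one must replace convergence of the $x_\alpha$'s by convergence of their coboundaries, and the only leverage for doing so is the nontrivial lower bound $\tfrac12\norm{\phi(g)}\leqslant \norm{\ov\phi(g)}$ supplied by Lemma~\ref{quotient displacement}. Every other part of the argument reduces to a rote application of that inequality.
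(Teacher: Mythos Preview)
Your proof is correct and follows essentially the same approach as the paper: both derive the bi-Lipschitz estimate on $B^1(G,X)$ from Lemma~\ref{quotient displacement} and then pass to the closures. The paper simply invokes the first displacement inequality of that lemma directly (rather than the second, for which you verify the hypothesis $\phi(g^n)/n\to 0$) and then says the resulting uniform homeomorphism between $B^1(G,X)$ and $B^1(G,X/X^G)$ extends to one between their closures, whereas you spell out the Cauchy-net argument for surjectivity onto $\ov{B^1(G,X/X^G)}$ by hand; these are the same idea at different levels of detail.
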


\begin{proof}
Because $\ov{\partial^1 x}=\ov x-\ov\pi(\cdot)\ov x$, Lemma \ref{quotient displacement} implies that the continuous linear map $\phi\mapsto \ov \phi$ defines a uniform homeomorphism between the topological vector spaces  ${B^1(G,X)}$ and  ${B^1(G,X/X^G)}$. It therefore follows that it extends to a topological isomorphism between $\ov{B^1(G,X)}$ and $\ov{B^1(G,X/X^G)}$. \end{proof}

Observe that Lemma \ref{rem} in particular implies that $B^1(G,X)$ is closed in $Z^1(G,X)$ if and only if $B^1(G,X/X^G)$ is closed in $Z^1(G,X/X^G)$.

\begin{prop}
Suppose $G$ is a group with no non-trivial homomorphisms to $\R$ and $(X,\pi)$ is an isometric Banach $G$-module. 
\begin{enumerate}
\item If $H^1(G,X/X^G)=\{0\}$, then also $H^1(G,X)=\{0\}$.
\item If $\ov H^1(G,X/X^G)=\{0\}$, then also $\ov H^1(G,X)=\{0\}$.
\end{enumerate}
\end{prop}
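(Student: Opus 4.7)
The plan is to reduce both parts to the following general principle: any $1$-cocycle $\eta\in Z^1(G,X)$ taking values in the invariant subspace $X^G$ is forced, by the cocycle identity and the absence of nontrivial homomorphisms $G\to \R$, to be zero. Using Lemma \ref{rem}, this will let us lift a splitting of $\ov\phi$ to a splitting of $\phi$ itself.

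\textbf{The key reduction.} Suppose $\eta\in Z^1(G,X)$ satisfies $\eta(g)\in X^G$ for all $g\in G$. Then for every $g,h\in G$ we have $\pi(g)\eta(h)=\eta(h)$, so the cocycle identity degenerates to
\[
\eta(gh)=\pi(g)\eta(h)+\eta(g)=\eta(h)+\eta(g),
\]
i.e., $\eta\colon G\to X^G$ is a group homomorphism into the additive group of $X^G$. If $\eta(g_0)\neq 0$ for some $g_0$, then by Hahn--Banach there is a continuous linear functional $\lambda$ on $X^G$ with $\lambda(\eta(g_0))\neq 0$, and then $\lambda\circ\eta\colon G\to \R$ is a nontrivial homomorphism, contradicting our standing assumption on $G$. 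Hence $\eta=0$.

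\textbf{Proof of (1).} Let $\phi\in Z^1(G,X)$. The cochain map $\phi\mapsto\ov\phi$ sends $\phi$ to a cocycle $\ov\phi\in Z^1(G,X/X^G)=B^1(G,X/X^G)$ (by assumption). By Lemma \ref{rem}, the map $B^1(G,X)\to B^1(G,X/X^G)$ is a bijection, so there exists $\psi\in B^1(G,X)$ with $\ov\psi=\ov\phi$. Setting $\eta=\phi-\psi\in Z^1(G,X)$, we have $\ov\eta=0$, i.e., $\eta$ takes values in $X^G$. The key reduction gives $\eta=0$, hence $\phi=\psi\in B^1(G,X)$, proving $H^1(G,X)=\{0\}$.

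\textbf{Proof of (2).} The argument is identical, replacing $B^1$ by its closure. If $\ov\phi\in\ov{B^1(G,X/X^G)}$, then by Lemma \ref{rem} (which gives a topological isomorphism between $\ov{B^1(G,X)}$ and $\ov{B^1(G,X/X^G)}$) there is some $\psi\in\ov{B^1(G,X)}\subseteq Z^1(G,X)$ with $\ov\psi=\ov\phi$. Then $\eta=\phi-\psi\in Z^1(G,X)$ again takes values in $X^G$, the key reduction yields $\eta=0$, and so $\phi=\psi\in\ov{B^1(G,X)}$, proving $\ov H^1(G,X)=\{0\}$.

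The main (mild) obstacle is simply the bookkeeping of lifting a (closed) coboundary in the quotient to one in $X$, which is exactly what Lemma \ref{rem} supplies; the genuine content lies in the elementary but essential observation that a cocycle valued in $X^G$ is a homomorphism and hence trivial under the hypothesis on $G$.
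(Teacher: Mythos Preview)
Your proof is correct and is essentially the same as the paper's. The paper phrases the argument slightly more tersely—stating that by Lemma \ref{rem} it suffices to show the cochain map $\phi\mapsto\ov\phi$ is injective on $Z^1(G,X)$, and then proving injectivity via exactly your ``key reduction'' (a cocycle with values in $X^G$ is a homomorphism, hence zero after composing with a linear functional)—but the content and structure are identical to what you wrote.
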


\begin{proof}
By Lemma \ref{rem}, it suffices to show that $\phi\mapsto \ov \phi$ is injective on $Z^1(G,X)$.
So, assume $\phi\in Z^1(G,X)$ and $\ov \phi=0$. Then $\phi[G]\subseteq X^G$  and so
$$
\phi(gf)=\pi(g)\phi(f)+\phi(g)=\phi(g)+\phi(f)
$$
for all $g,f\in G$. In other words, such a $\phi$ is a homomorphism from $G$ to the additive group of $X$ and thus, composing with a linear functional, one obtains a homomorphism to $\R$. By the assumption on $G$, it follows that $\ov \phi=0$ implies that $\phi=0$ for all $\phi\in Z^1(G,X)$. In other words, the mapping $\phi\mapsto \ov \phi$ is injective on $Z^1(G,X)$.
\end{proof}

The next result applies, in particular, to f.g. groups generated by a set of elements of finite order such as the infinite dihedral group $D_\infty$.
\begin{prop}
Suppose $G$ is a group with a symmetric finite generating set $\Sigma$ and  associated length function $\ell$. Assume also that $\lim_n\frac {\ell(g^n)}n=0$ for all $g\in \Sigma$ and that $(X,\pi)$ is an isometric Banach $G$-module. Then the map $\phi\mapsto \ov \phi$ defines an isomorphic embedding of $\ov H^1(G,X)$ into $\ov H^1(G,X/X^G)$.
\end{prop}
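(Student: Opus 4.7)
The plan is to verify that $\phi \mapsto \ov\phi$ induces an injective continuous linear map on reduced cohomology, and then to show that this map is a topological embedding by combining Lemma~\ref{quotient displacement} with the word-length structure furnished by $\Sigma$.

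Well-definedness and continuity of the induced map
$$
\bar P \colon \ov H^1(G,X) \to \ov H^1(G,X/X^G), \qquad [\phi] \mapsto [\ov\phi],
$$
follow from Lemma~\ref{rem}: on the level of cochains $\phi \mapsto \ov\phi$ is continuous and sends $\ov{B^1(G,X)}$ onto $\ov{B^1(G,X/X^G)}$, so it descends to $\ov H^1$. For injectivity, suppose $\phi \in Z^1(G,X)$ has $\ov\phi \in \ov{B^1(G,X/X^G)}$. By the same lemma we may lift to $\theta \in \ov{B^1(G,X)}$ with $\ov\theta = \ov\phi$, and then $\phi' := \phi - \theta$ satisfies $\phi'(g) \in X^G$ for every $g$. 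Because $\pi(g)$ fixes values in $X^G$, the cocycle relation collapses to $\phi'(gh) = \phi'(g) + \phi'(h)$, exhibiting $\phi'$ as a group homomorphism $G \to (X^G,+)$. For $\sigma \in \Sigma$ and $n \geqslant 1$, additivity gives $\phi'(\sigma^n) = n\phi'(\sigma)$, while cocycle subadditivity (using that $\pi$ is isometric) gives $\norm{\phi'(\sigma^n)} \leqslant \ell(\sigma^n)\,M'$ with $M' = \max_{\tau \in \Sigma}\norm{\phi'(\tau)}$. Combining these yields $\norm{\phi'(\sigma)} \leqslant M'\cdot \ell(\sigma^n)/n \to 0$, so $\phi'(\sigma) = 0$ for every $\sigma \in \Sigma$; since $\Sigma$ generates $G$, we conclude $\phi' = 0$ and hence $\phi = \theta \in \ov{B^1(G,X)}$.

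To show $\bar P$ is a topological embedding, let a basic zero-neighborhood $V = q_X(U_{E,\eps})$ of $\ov H^1(G,X)$ be given, where $q_X$ is the quotient map, $E \subseteq G$ is finite, $\eps > 0$, and
$$
U_{E,\eps} := \{\psi \in Z^1(G,X) \del \sup_{g \in E}\norm{\psi(g)} < \eps\}.
$$
Set $L = \max_{g \in E}\ell(g)$ and $\eps' = \eps/(2L)$, and let $W$ be the analogous basic zero-neighborhood of $\ov H^1(G, X/X^G)$ determined by $\Sigma$ and $\eps'$. We claim $\bar P^{-1}(W) \subseteq V$. Suppose $\bar P([\phi]) \in W$, so $\ov\phi - \psi \in \ov{B^1(G,X/X^G)}$ for some $\psi \in Z^1(G,X/X^G)$ with $\norm{\psi(\sigma)} < \eps'$ for every $\sigma \in \Sigma$. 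Lift this coboundary via Lemma~\ref{rem} to $\theta \in \ov{B^1(G,X)}$ and set $\phi' = \phi - \theta$, so that $\ov{\phi'} = \psi$. Cocycle subadditivity gives $\norm{\phi'(\sigma^n)} \leqslant \ell(\sigma^n)\max_{\tau \in \Sigma}\norm{\phi'(\tau)}$, so the hypothesis $\ell(\sigma^n)/n \to 0$ forces $\phi'(\sigma^n)/n \to 0$, allowing Lemma~\ref{quotient displacement} to apply and yield $\norm{\phi'(\sigma)} \leqslant 2\norm{\ov{\phi'}(\sigma)} < 2\eps'$ for each $\sigma \in \Sigma$. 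Expanding each $g \in E$ as a word of length at most $L$ in $\Sigma$ and reapplying cocycle subadditivity gives $\sup_{g \in E}\norm{\phi'(g)} < 2L\eps' = \eps$, so $\phi' \in U_{E,\eps}$. Since $[\phi] = [\phi']$ in $\ov H^1(G,X)$, we conclude $[\phi] \in V$, as required.

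The main obstacle is the asymmetry inherent in Lemma~\ref{quotient displacement}: to transfer a bound on $\norm{\ov{\phi'}(\sigma)}$ back to one on $\norm{\phi'(\sigma)}$ one must first verify that $\phi'(\sigma^n)/n \to 0$, and it is precisely the slow-growth condition $\ell(\sigma^n)/n \to 0$, combined with cocycle subadditivity, that makes this verification work uniformly for each generator $\sigma \in \Sigma$. Beyond this key observation, the argument is routine bookkeeping around the quotient topology and the surjective lifting from $\ov{B^1(G,X/X^G)}$ to $\ov{B^1(G,X)}$ provided by Lemma~\ref{rem}.
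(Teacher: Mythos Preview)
Your proof is correct and rests on the same two ingredients as the paper's: Lemma~\ref{quotient displacement} (with its hypothesis $\phi(\sigma^n)/n\to 0$ verified via cocycle subadditivity and the assumption $\ell(\sigma^n)/n\to 0$) together with the lifting provided by Lemma~\ref{rem}. The organisation, however, differs. The paper works one level up: it observes that the estimate $\tfrac{1}{2}\norm{\phi(\sigma)}\leqslant\norm{\ov\phi(\sigma)}$ for $\sigma\in\Sigma$ already makes $\phi\mapsto\ov\phi$ an isomorphic embedding of the Fr\'echet space $Z^1(G,X)$ into $Z^1(G,X/X^G)$, and since by Lemma~\ref{rem} this map carries $\ov{B^1(G,X)}$ onto $\ov{B^1(G,X/X^G)}$, the induced map on quotients is automatically an isomorphic embedding. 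You instead work directly at the level of $\ov H^1$, which forces you to lift along Lemma~\ref{rem} twice (once for injectivity, once for the neighbourhood chase) and to handle injectivity separately via the observation that a cocycle taking values in $X^G$ is a group homomorphism. Both routes are sound; the paper's is shorter because the cocycle-level embedding packages injectivity and continuity of the inverse into a single sequential estimate.
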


\begin{proof}
Recall that $\ell$ is defined by  $\ell(g)=\min(k\del g=f_1\cdots f_k, f_i \in \Sigma)$. Also, for $\phi\in Z^1(G,X)$ and $f_i\in G$,
\maths{
\norm{\phi(f_1\cdots f_n)}
&=\Norm{\phi(f_1)+\pi(f_1)\phi(f_2)+\pi(f_1f_2)\phi(f_3)+\cdots+\pi(f_1\cdots f_{n-1})\phi(f_n)}\\
&\leqslant \sum_{i=1}^n\norm{\phi(f_i)}.
}
It thus follows that $\norm{\phi(g)}\leqslant \max_{f\in \Sigma}\norm{\phi(f)}\cdot \ell(g)$ for all $g\in G$ and, in particular, that
$$
\lim_n\frac{\norm{\phi(g^n)}}n\leqslant \max_{f\in \Sigma}\norm{\phi(f)}\cdot \lim_n\frac{\ell(g^n)}n=0
$$
for all $g\in \Sigma$. Applying Lemma \ref{quotient displacement}, we find that, for $g\in \Sigma$,
$$
\frac 12\cdot \norm{\phi(g)}\;\leqslant\; \norm{\ov{\phi}(g)}_{X/X^G}\;\leqslant\; \norm{\phi(g)}.
$$
Now, if $\lim_i\ov \phi_i=0$ for some sequence $\phi_i\in Z^1(G,X)$, then, for all $g\in \Sigma$, we have  $\lim_i\ov{\phi_i}(g)=0$  and thus $\lim_i\phi_i(g)=0$, whereby $\lim_i \phi_i=0$. In other words, the Fr\'echet space $Z^1(G,X)$ isomorphically embeds into the Fr\'echet space  $Z^1(G,X/X^G)$ via $\phi\mapsto \ov \phi$. As $\ov {B^1(G,X)}$ is mapped onto  $\ov {B^1(G,X/X^G)}$, we find that $\phi\mapsto \ov \phi$ induces an isomorphic embedding of the quotient  $\ov H^1(G,X)$ into $\ov H^1(G,X/X^G)$.
\end{proof}

The next lemma is now a simple adaptation of the corresponding result for Hilbert spaces due to A. Guichardet \cite{guichardet}.
\begin{lemme}\label{guichardet}
Suppose $G$ is a countable group and $(X,\pi)$ an isometric  Banach $G$-module. Then the following conditions are equivalent.
\begin{enumerate}
\item The space of $1$-coboundaries $B^1(G,X)$ is closed in $Z^1(G,X)$,
\item the isometric Banach $G$-module $(X/X^G,\ov \pi)$ does not have almost invariant unit vectors.
\end{enumerate}
\end{lemme}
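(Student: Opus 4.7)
The plan is to reduce to the case $X^G=\{0\}$ via Lemma \ref{rem} and then recognize the equivalence as an application of the open mapping theorem to the coboundary operator $\partial^1$.

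First, using Lemma \ref{rem}, one sees that $B^1(G,X)$ is closed in $Z^1(G,X)$ if and only if $B^1(G,X/X^G)$ is closed in $Z^1(G,X/X^G)$, because $\phi\mapsto \ov\phi$ sends $B^1(G,X)$ bijectively onto $B^1(G,X/X^G)$ and induces a topological isomorphism between their closures. Replacing $(X,\pi)$ by $(X/X^G,\ov \pi)$, I would then assume $X^G=\{0\}$, whence $\partial^1\colon X\to Z^1(G,X)$ is a continuous linear injection. In this reduced form, the statement becomes: $B^1(G,X)$ is closed in $Z^1(G,X)$ if and only if $(X,\pi)$ itself has no almost invariant unit vectors.

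The direction $(2)\Rightarrow(1)$ is the easy half. Assuming there are no almost invariant unit vectors, I would choose a finite set $E\subseteq G$ and $\eps>0$ with $\max_{g\in E}\norm{x-\pi(g)x}\geqslant \eps\norm x$ for all $x\in X$, so that $\partial^1$ is bounded below in the seminorm $p_E(\phi)=\max_{g\in E}\norm{\phi(g)}$ on $C^1(G,X)$. A routine Cauchy-sequence argument then shows that $\partial^1(X)=B^1(G,X)$ is closed in $C^1(G,X)$, hence also in $Z^1(G,X)$.

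For the converse $(1)\Rightarrow(2)$, the key input is that the countability of $G$ makes $C^1(G,X)\iso \prod_GX$ a Fr\'echet space, and likewise its closed subspace $Z^1(G,X)$. If $B^1(G,X)$ is assumed closed in $Z^1(G,X)$, then it is itself a Fr\'echet space, and $\partial^1\colon X\to B^1(G,X)$ is a continuous linear bijection between Fr\'echet spaces. The open mapping theorem would then guarantee that $\partial^1$ is a topological isomorphism, producing a finite set $E\subseteq G$ and a constant $C>0$ such that $\norm x\leqslant C\max_{g\in E}\norm{x-\pi(g)x}$ for all $x\in X$. This is precisely the absence of almost invariant unit vectors in $(X,\pi)$. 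The main potential obstacle is simply ensuring that the Fr\'echet structure is in place so that the open mapping theorem applies --- which is exactly what the countability hypothesis on $G$ is there to provide.
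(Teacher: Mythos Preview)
Your proposal is correct and follows essentially the same route as the paper: the direction $(1)\Rightarrow(2)$ hinges on the open mapping theorem for Fr\'echet spaces (enabled by countability of $G$), and $(2)\Rightarrow(1)$ is a bounded-below/Cauchy argument. The organisational difference is that you first reduce to the quotient via Lemma~\ref{rem}, while the paper works directly with the factorisation $\tilde{\partial}^1\colon X/X^G\to B^1(G,X)$ and then invokes Lemma~\ref{quotient displacement} to compare displacements in $X$ and in $X/X^G$.

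There is one small point to tighten. When you write ``replacing $(X,\pi)$ by $(X/X^G,\ov\pi)$, I would then assume $X^G=\{0\}$'', you are implicitly using that $(X/X^G)^G=\{0\}$, since for your $(1)\Rightarrow(2)$ you need $\partial^1$ to be injective on the new module. This is true for isometric modules, but it is not entirely automatic: if $\ov x\in (X/X^G)^G$, then $g\mapsto \pi(g)x-x$ is a group homomorphism into $X^G$ with bounded image (by $2\norm{x}$), hence identically zero; a sentence to this effect would close the gap. The paper sidesteps the issue by applying the open mapping theorem to $\tilde{\partial}^1\colon X/X^G\to B^1(G,X)$, whose bijectivity is immediate from $\ker\partial^1=X^G$, rather than to $\partial^1\colon X/X^G\to B^1(G,X/X^G)$.
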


\begin{proof}
(1)$\saa$(2): Observe first that $X\overset{{\partial^1}}\longrightarrow B^1(G,X)$ factors through a continuous linear bijection
$$
X/X^G\overset{\tilde{\partial^1}}\longrightarrow B^1(G,X).
$$
Suppose now that $B^1(G,X)$ is closed in $Z^1(G,X)$ and therefore is a Fr\'echet space. Then, by the open mapping theorem,  ${\tilde{\partial^1}}$ has a continuous inverse and hence there are a finite set $E\subseteq G$ and some $\eps>0$ so that $\norm{\ov x}_{X/X^G}\leqslant 1$ whenever $\max_{g\in E}\norm{x-\pi(g)x}\leqslant\eps$. By Lemma \ref{quotient displacement}, it follows that
$$
\norm{\ov x}_{X/X^G}
\leqslant 
\frac 1\eps\cdot\max_{g\in E}\norm{x-\pi(g)x}
\leqslant \frac 2\eps\cdot\max_{g\in E}\norm{\ov x-\ov\pi(g)\ov x}_{X/X^G}
$$
and thus $(X/X^G,\ov \pi)$ does not have almost invariant unit vectors.

(2)$\saa$(1):
Suppose conversely that $(X/X^G,\ov \pi)$ does not have almost invariant unit vectors. This means that there is some finite set $E\subseteq G$ and some $K$ so that 
$$
\norm{\ov x}_{X/X^G}\leqslant K\cdot\max_{g\in E}\norm{\ov x-\ov\pi(g)\ov x}_{X/X^G}\leqslant K\cdot\max_{g\in E}\norm{x-\pi(g)x}
$$
for all $x\in X$.
To see that $B^1(G,X)$ is closed in $Z^1(G,X)$, assume that $\partial^1x_n\Lim{n}\phi$ in $Z^1(G,X)$. By the above inequality, we see that $(\ov{x}_n)$ is Cauchy in $X/X_G$ and thus converges to some $\ov x\in X/X^G$, whereby $\phi=\partial^1x\in B^1(G,X)$.
\end{proof}

Recall that a group $G$ is said to have {\em property (T)} if every isometric Hilbert $G$-module with almost invariant unit vectors also has actual invariant unit vectors. There are various ways of generalising this to other classes of Banach spaces. We shall be following the formulation given in \cite{BFGM}, which however merits some clarification.

\begin{defi}
Let $\ku B$ be a class of Banach spaces. A group G is said to have {\em property ($\text{T}_{\ku B}$)} if, for every isometric Banach $G$-module $(X,\pi)$ with $X\in \ku B$, the isometric Banach $G$-module $(X/X^G,\ov\pi)$ does not have almost invariant unit vectors.
\end{defi}

The central observation regarding this definition is that, due to Lemma \ref{guichardet}, a countable group $G$ has property  ($\text{T}_{\ku B}$) if and only if 
$B^1(G,X)$ is closed in $Z^1(G,X)$ for all isometric Banach $G$-modules $(X,\pi)$ with $X\in \ku B$.

The next result is a partial generalisation to the setting of uniformly convex Banach spaces of an earlier result due to Y. Shalom \cite{shalom} (see also Theorem 3.2.1 \cite{bekka}).

\begin{prop}\label{shalom}
The following conditions are equivalent for a finitely generated group $G$ without $\Z$ as a quotient.
\begin{enumerate}
\item for every uniformly convex isometric Banach $G$-module $(X,\pi)$, we have 
$$
H^1(G,X)=\{0\},
$$
\item for every uniformly convex isometric Banach $G$-module $(X,\pi)$ so that $X^G=\{0\}$, we have 
$$
\ov H^1(G,X)=\{0\}.
$$ 
\end{enumerate}
Furthermore, these conditions imply that $G$ has property $(\text{T}_{\ku {UC}})$, where $\ku {UC}$ is the class of uniformly convex Banach spaces.
\end{prop}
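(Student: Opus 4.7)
The plan is to establish three implications: $(1)\Rightarrow(2)$, $(1)\Rightarrow(T_{\ku{UC}})$, and $(2)\Rightarrow(1)$, the last of which will also yield $(2)\Rightarrow(T_{\ku{UC}})$.

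Both $(1)\Rightarrow(2)$ and $(1)\Rightarrow(T_{\ku{UC}})$ are straightforward. The first is immediate from $B^1(G,X)\subseteq\ov{B^1(G,X)}$. For the second, $(1)$ gives $B^1(G,X)=Z^1(G,X)$; since $Z^1(G,X)$ is closed in $C^1(G,X)$, so is $B^1(G,X)$, and Lemma \ref{guichardet} then supplies that $(X/X^G,\ov\pi)$ has no almost invariant unit vectors.

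The core of the proof is $(2)\Rightarrow(1)$. Given a uniformly convex $(X,\pi)$, Milman--Pettis provides reflexivity, and the Alaoglu--Birkhoff decomposition yields $X=X^G\oplus X_G$ into $G$-invariant uniformly convex summands with $(X_G)^G=\{0\}$. Any $\phi\in Z^1(G,X)$ decomposes as $\phi^G+\phi_G$. On $X^G$ the action is trivial, so $\phi^G\in\operatorname{Hom}(G,X^G)$; since $G$ is finitely generated, any non-trivial $G\to\R$ has image a finitely generated subgroup of $\R$, necessarily $\Z^k$ for some $k\geqslant 1$, yielding a $\Z$-quotient, which is forbidden. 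Hence $\phi^G=0$. For $\phi_G$, condition $(2)$ gives $x_n\in X_G$ with $\partial x_n\to\phi_G$ pointwise; along a finite generating set $\Sigma$ one obtains $\max_{g\in\Sigma}\|(x_n-x_m)-\pi(g)(x_n-x_m)\|\to 0$. Granted $(T_{\ku{UC}})$, Lemma \ref{guichardet} applied to $X_G$ (using $(X_G)^G=\{0\}$) furnishes a finite $E\subseteq G$ and $C>0$ with $\|y\|\leqslant C\max_{g\in E}\|y-\pi(g)y\|$ for all $y\in X_G$; applied to $y=x_n-x_m$, this forces $(x_n)$ to be Cauchy, hence convergent to some $x\in X_G$ with $\partial x=\phi_G$, giving $\phi\in B^1(G,X)$.

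What remains, and is the main obstacle, is $(2)\Rightarrow(T_{\ku{UC}})$. Arguing contrapositively, I would assume some uniformly convex $(Y,\rho)$ with $Y^G=\{0\}$ admits almost invariant unit vectors $(y_n)$. Following Shalom's strategy in \cite{shalom}, I would build a uniformly convex module $Y'$---for instance an $\ell^p$-sum for some $p\in(1,\infty)$ or a suitable ultrapower quotient of copies of $Y$, both of which preserve uniform convexity---and a cocycle of the form $\phi(g)=(\lambda_n(\rho(g)y_n-y_n))_n$, with weights $\lambda_n$ tuned along an exhaustion of $G$ by finite sets so that $\phi\in Z^1(G,Y')$. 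The no $\Z$-quotient hypothesis, combined with finite generation, should prevent the growth of $\phi$ on $G$ from being absorbed by coboundaries, placing $[\phi]$ outside $\ov{B^1(G,Y')}$ and contradicting $(2)$. Calibrating the weights and target to simultaneously keep $\phi$ a bona fide cocycle in a uniformly convex space with $(Y')^G=\{0\}$ and to ensure non-triviality in reduced cohomology is the delicate technical point, and is where uniform convexity of $Y$, rather than mere strict convexity, is expected to be essential.
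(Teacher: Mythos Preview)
Your implications $(1)\Rightarrow(2)$ and $(1)\Rightarrow(T_{\ku{UC}})$ are fine and match the paper. Your reduction of $(2)\Rightarrow(1)$ to $(2)+(T_{\ku{UC}})\Rightarrow(1)$ via the Alaoglu--Birkhoff decomposition is also correct. The genuine gap is in your attempt at $(2)\Rightarrow(T_{\ku{UC}})$. The cocycle you propose, $\phi(g)=\big(\lambda_n(\rho(g)y_n-y_n)\big)_n$, is built entirely out of coboundaries: each coordinate is $\partial^1(\lambda_n y_n)(g)$. In an $\ell^p$-sum, if $(\lambda_n y_n)_n$ lies in the space then $\phi$ is an honest coboundary; if it does not, then $\phi$ is still a pointwise limit of the truncated coboundaries $\partial^1\big((\lambda_1 y_1,\ldots,\lambda_N y_N,0,0,\ldots)\big)$, so $\phi\in\ov{B^1(G,Y')}$ regardless of how you tune the $\lambda_n$. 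The same obstruction applies in an ultrapower. Almost invariant unit vectors give you, via Guichardet, that $B^1$ is not closed---i.e.\ $H^1\neq\ov H^1$---but they do not by themselves produce a class surviving in $\ov H^1$. The ``no $\Z$-quotient'' hypothesis does nothing to prevent this, since the cocycle you wrote down is already trivial in reduced cohomology before any homomorphism considerations enter.

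The paper sidesteps this entirely by proving $(2)\Rightarrow(1)$ directly, without passing through $(T_{\ku{UC}})$. Starting from an affine isometric action $G\overset\alpha\curvearrowright X$ on a uniformly convex space with no fixed point, it invokes the Gromov--Schoen rescaling construction (see \cite{stalder}) to produce a new affine isometric action $G\overset\beta\curvearrowright Y$ on another uniformly convex space satisfying $\max_{g\in\Sigma}\norm{y-\beta(g)y}\geqslant 1$ for \emph{every} $y\in Y$. This lower bound on displacement is exactly what forces the associated cocycle $\psi$ out of $\ov{B^1(G,Y)}$. One then uses the Alaoglu--Birkhoff projection onto $Y^G$ and the no-$\Z$-quotient hypothesis (just as you did) to see that $\psi$ actually takes values in $Y_G$, whence $\ov H^1(G,Y_G)\neq\{0\}$ with $(Y_G)^G=\{0\}$, contradicting $(2)$. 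The missing idea in your proposal is precisely this rescaling step: one must start from an unbounded orbit (a genuinely non-trivial $H^1$ class) and renormalise, rather than try to assemble a cocycle from almost invariant vectors.
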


\begin{proof}
That (1) implies (2) is obvious, so let us consider the converse. Assume that $(X,\pi)$ is a uniformly convex Banach $G$-module so that $H^1(G,X)\neq \{0\}$. Then we may pick some $\phi\in Z^1(G,X)$ so that the affine isometric action $G\overset \alpha\curvearrowright X$ given by $\alpha(g)x=\pi(g)x-\phi(g)$ has no fixed points on $X$. Fix also a finite generating set $\Sigma$ for $G$. Using a construction of M. Gromov and R. Schoen (see, for example, \cite{stalder} for details), we may produce another (necessarily affine) isometric action $G\overset \beta\curvearrowright Y$ on a uniformly convex Banach space so that
$$
\max_{g\in \Sigma}\norm{y-\beta(g)y}\geqslant 1
$$
for all $y\in Y$. Let $\sigma$ and $\psi$ denote respectively the linear and translation parts of $\beta$ and let $Y=Y^G\oplus Y_G$ be the associated Alaoglu--Birkhoff decomposition. Let also $Y\overset P\longrightarrow Y^G$ be the associated projection. Because $P$  is $G$-equivariant, we find that
$$
P\psi(gf)=P\big(\sigma(g)\psi(f)+\psi(g)\big)=\sigma(g)P\psi(f)+P\psi(g)=P\psi(f)+P\psi(g),
$$
that is, $P\psi$ is a homomorphism from $G$ into the additive group of the Banach space $Y^G$. However, as $G$ is finitely generated and does not have $\Z$ as a quotient, it follows that $P\psi$ is constantly $0$ and therefore that $\psi(g)\in Y_G$ for all $g\in G$. This implies that $Y_G$ is invariant under the action $G\overset \beta\curvearrowright Y$ and that $(Y_G,\sigma)$ is a uniformly convex isometric Banach $G$-module without invariant unit vectors for which 
$$
\ov H^1(G,Y_G)\neq \{0\}.
$$ 
So (2)$\saa$(1).

Finally, if, for every uniformly convex isometric Banach $G$-module $(X,\pi)$, we have $H^1(G,X)=\{0\}$, then $B^1(G,X)$ is closed in $Z^1(G,X)$, which, by Lemma \ref{guichardet}, implies that $G$ has property ($\text{T}_{\ku {UC}}$).
\end{proof}

%%%%%%%%%%%%%%%%%%%%%%%%%%%%%%%%%%%%
%%%%%%%%%%%%%%%%%%%%%%%%%%%%%%%%%%%%
%%%%%%%%%%%%%%%%%%%%%%%%%%%%%%%%%%%%
%%%%%%%%%%%%%%%%%%%%%%%%%%%%%%%%%%%%

%%%%%%%%%%%%%%%%%%%%%%%%%%%%%%%%%%%%
%%%%%%%%%%%%%%%%%%%%%%%%%%%%%%%%%%%%
%%%%%%%%%%%%%%%%%%%%%%%%%%%%%%%%%%%%
%%%%%%%%%%%%%%%%%%%%%%%%%%%%%%%%%%%%

%%%%%%%%%%%%%%%%%%%%%%%%%%%%%%%%%%%%%%%%%%%%%%%%%%%%%%%
%%%%%%%%%%%%%%%%%%%%%%%%%%%%%%%%%%%%%%%%%%%%%%%%%%%%%%%
%%%%%%%%%%%%%%%%%%%%%%%%%%%%%%%%%%%%%%%%%%%%%%%%%%%%%%%

\end{document}